\documentclass[
final
, nomarks
]{dmtcs-episciences}


\usepackage[utf8]{inputenc}
\usepackage{subfigure}


\usepackage{amssymb,amsmath,amsthm,latexsym,enumerate}
\usepackage{hyperref,url}
\usepackage{multicol,enumitem}
\usepackage{verbatim,color}
\usepackage[raggedright]{titlesec} 

\newtheorem{thm}{Theorem}
\numberwithin{thm}{section}
\newtheorem{cor}[thm]{Corollary}
\newtheorem{lem}[thm]{Lemma}
\newtheorem{prop}[thm]{Proposition}

\newtheorem{conj}[thm]{Conjecture}

\theoremstyle{definition}
\newtheorem{defin}[thm]{Definition}

\newtheorem{exa}[thm]{Example}

\newcommand\bI{\mathbf{I}}
\newcommand\Tstrut{\rule{0pt}{2.6ex}}         
\newcommand\Bstrut{\rule[-0.9ex]{0pt}{0pt}}   

\DeclareMathOperator{\Em}{Em}

\author{Juan S. Auli
  \and Sergi Elizalde\thanks{Partially supported by Simons Foundation grant \#280575.}}
\title{Consecutive Patterns in Inversion Sequences}
\affiliation{
Department of Mathematics, Dartmouth College
}
\keywords{Inversion sequence, pattern avoidance, consecutive pattern, Wilf equivalence}
\received{2019-4-5}
\revised{2019-9-17}
\accepted{2019-10-15}

\begin{document}
\publicationdetails{21}{2019}{2}{6}{5350}
\maketitle
\begin{abstract}
An inversion sequence of length $n$ is an integer sequence
$e=e_{1}e_{2}\dots e_{n}$
such that $0\leq e_{i}<i$ for each $i$.
Corteel--Martinez--Savage--Weselcouch and Mansour--Shattuck began the study of
patterns in inversion sequences, focusing on the
enumeration of those that avoid classical patterns of length 3.
We initiate an analogous systematic study of {\em consecutive} patterns in inversion
sequences, namely patterns whose entries are required to occur in adjacent positions. We
enumerate inversion sequences that avoid consecutive patterns of length 3, and generalize some results to patterns of arbitrary length. Additionally, we study the notion of Wilf equivalence of consecutive patterns in inversion
sequences, as well as generalizations of this notion analogous to those studied for permutation patterns.
We classify patterns of length up to 4 according to the corresponding Wilf equivalence relations.
\end{abstract}

\section{Introduction}\label{sec:intro}

Let $\bI_n$ denote the set of inversion sequences of length $n$, defined as
integer sequences $e=e_{1}e_{2}\dots e_{n}$ such that $0\leq e_{i}<i$ for each $i$. Let $S_n$ denote
the symmetric group of all permutations of $[n]=\{1,2,\dots,n\}$.
There is a natural bijection between $S_n$ and $\bI_n$, where each
permutation $\pi\in S_n$ is encoded by its inversion sequence
\begin{equation}\label{eq:theta_bijection}
\Theta(\pi)=e=e_{1}e_{2}\dots e_{n}, \quad\textnormal{ with }\quad
e_{i}=\left|\{j:j<i\textnormal{ and }\pi_{j}>\pi_{i}\}\right|.
\end{equation}
Other closely-related bijections between $S_{n}$ and $\bI_{n}$, obtained by composing $\Theta$ with trivial symmetries, appear in the literature as well.
Corteel, Martinez, Savage and Weselcouch~\cite{MartinezSavageI}, and
Mansour and Shattuck~\cite{MansourShattuck}
initiated the
study of classical patterns in inversion sequences, motivated by the potential of this correspondence to
inform the study of patterns in permutations. Their systematic
enumeration of classical patterns of length 3 turns out to be interesting in its own right, as
it connects these patterns in inversion sequences to well-known sequences such as Bell numbers,
Euler up/down numbers, Fibonacci numbers and Schr{\"o}der numbers.
Since the appearance of the work of Corteel {\it et al.} and Mansour and Shattuck, the interest in patterns in inversion sequences has been on the rise, and research in this area continues to proliferate~\cite{Beaton, Bouvel, KimLin, KimLinII, Lin, MartinezSavageII, Yan}.

Let us start with some basic definitions. We define a {\em pattern} to be a sequence $p=p_{1}p_{2}\dots p_{r}$ with $p_{i}\in\{0,1,\ldots,r-1\}$ for all $i$, where a value $j>0$ can appear in $p$ only if $j-1$ appears as well.
Define the {\em reduction} of a word $w=w_1w_2\dots w_k$ over the integers to be the word obtained by
replacing all the occurrences of the $i$th smallest entry of $w$ with $i-1$ for all $i$. We say that an inversion sequence $e$ {\em contains} the
classical
pattern $p=p_{1}p_{2}\dots p_{r}$ if there exists
a subsequence
$e_{i_{1}}e_{i_{2}}\dots e_{i_{r}}$ of $e$ whose reduction is $p$.
Otherwise, we say that $e$ {\em avoids} $p$.

\begin{exa}
The inversion sequence $e=0021213$ avoids the classical pattern $210$, but it contains
the classical pattern $110$. Indeed, $e_{3}e_{5}e_{6}=221$ has reduction $110$.
\end{exa}

Unlike patterns in permutations, patterns in inversion sequences may have repeated entries.

Corteel {\it et al.}~\cite{MartinezSavageI} and Mansour and
Shattuck~\cite{MansourShattuck} provide formulas for the number of inversion sequences
of length $n$ that avoid a given pattern $p$ of length 3.
In this paper, we initiate an analogous systematic study of consecutive patterns in inversion sequences.

We define consecutive patterns in inversion sequences in analogy to consecutive patterns in permutations, which were
introduced by Elizalde and Noy~\cite{ElizaldeNoy}.
We underline the entries of a consecutive pattern to distinguish it from the classical case.

\begin{defin} An inversion sequence $e$ {\em contains} the consecutive pattern
$p=\underline{p_{1}p_{2}\dots p_{r}}$ if there is a consecutive subsequence
$e_{i}e_{i+1}\dots e_{i+r-1}$ of $e$ whose reduction is $p$; this subsequence is called an {\em occurrence} of $p$ in position $i$.
Otherwise, we say that $e$ {\em avoids} $p$.  Define
\[
\Em(p,e)=\left\{i:e_{i}e_{i+1}\dots e_{i+r-1} \textnormal{ is an occurrence of } p\right\}.
\]
Denote by $\bI_n(p)$ the set of inversion sequences of length $n$ that avoid $p$.
\end{defin}

\begin{exa}\label{exa:contain_avoid_consec}
The inversion sequence $e=0021100300\in\bI_{10}$ avoids  $\underline{120}$, but it contains $\underline{010}$, since $e_7e_8e_9=030$ is an occurrence of $\underline{010}$. It also contains three occurrences of $p=\underline{100}$, in positions $\Em(p,e)=\left\{3,5,8\right\}$.
\end{exa}

It is often useful to represent an inversion sequence $e$ as an underdiagonal lattice path consisting of unit horizontal and vertical steps. Each entry $e_{i}$ of $e$ is represented by a horizontal step: a segment between the points $(i-1,e_{i})$ and $(i,e_{i})$. The vertical steps are then inserted to make the path connected (see Figure~\ref{fig:path_represent}).

\begin{figure}[htb]
	\noindent \begin{centering}
	\includegraphics[width=0.323\textwidth]{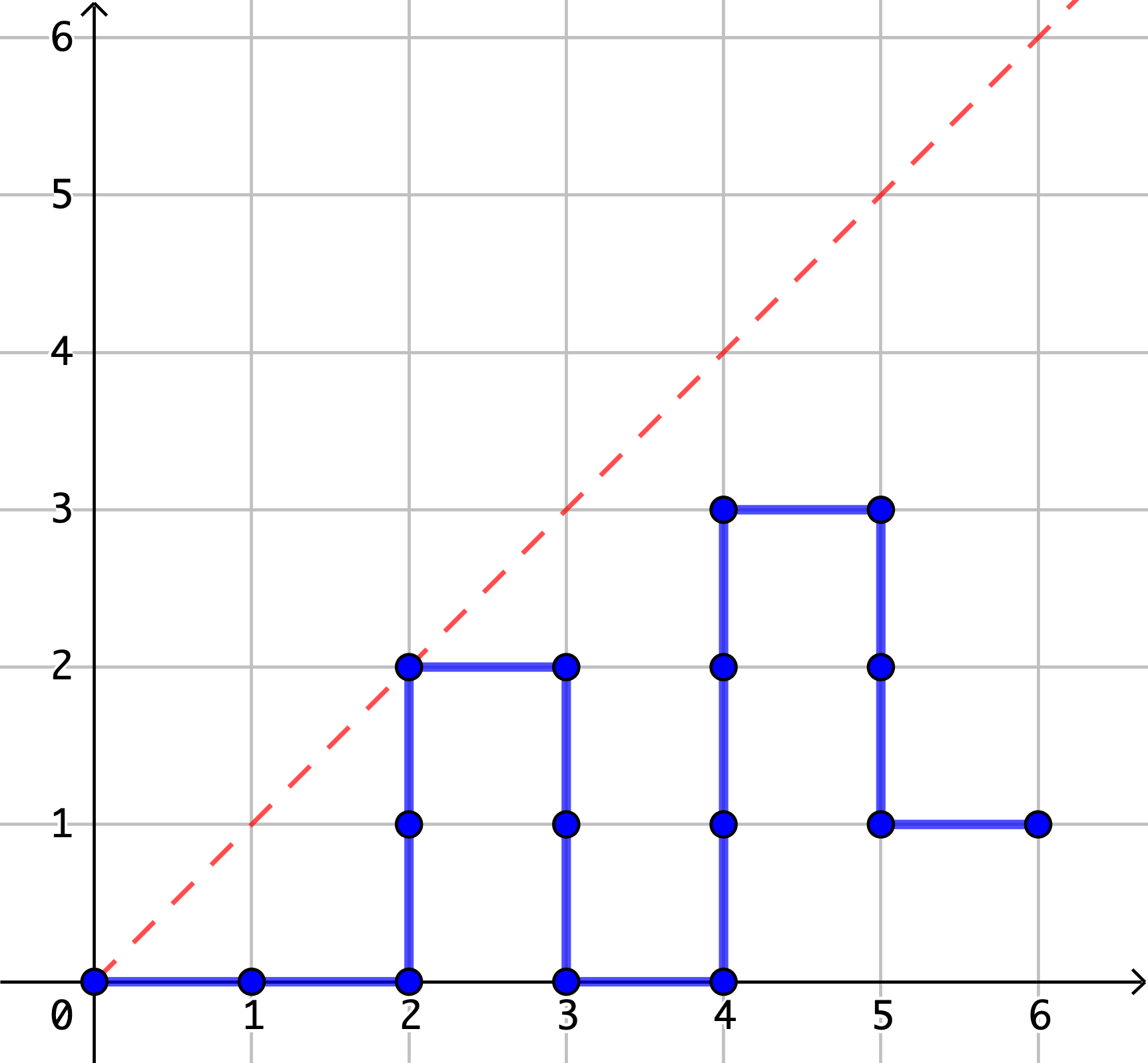}
		\par\end{centering}

	\protect\caption{Representation of $e=002031\in\bI_{6}$ as a lattice path.\label{fig:path_represent}}
\end{figure}

We are also interested in the following equivalence relations, defined in analogy to those for consecutive patterns in permutations (see~\cite{DwyerElizalde,Elizalde}). Henceforth, unless otherwise stated,
patterns will refer to patterns in inversion sequences, and
equivalence of patterns will refer to the equivalences in Definition~\ref{def:Wilf}.

\begin{defin}\label{def:Wilf} Let $p$ and $p'$ be consecutive patterns. We say that $p$ and $p'$ are
\begin{itemize}
\item
{\em Wilf equivalent\/}, denoted by $p\sim p'$, if
$\left|\bI_{n}\left(p\right)\right|=\left|\bI_{n}\left(p'\right)\right|$,
for all $n$;
\item
{\em strongly Wilf equivalent\/}, denoted by $p\stackrel{s}{\sim}p'$,
if for each $n$ and $m$, the number of
inversion sequences in
$\bI_{n}$ containing $m$ occurrences of $p$ is the same as for $p'$;
\item {\em super-strongly Wilf equivalent}, denoted  by
$p\stackrel{ss}{\sim}p'$, if the above condition holds
for any set of prescribed positions for the $m$ occurrences; that is,
\[
\left|\left\{e\in\bI_{n}:\Em(p,e)=T\right\}\right|=\left|\left\{e\in\bI_{n}:\Em(p',e)=T\right\}\right|.
\]
for all $n$ and all $T\subseteq [n]$.
\end{itemize}
\end{defin}

As suggested by their names, we have that $p\stackrel{ss}{\sim}p'$ implies $p\stackrel{s}{\sim}p'$, which in turn implies $p\sim p'$.
An equivalence of any one of the these three types will be called a {\em generalized Wilf equivalence}.

\section{Summary of Results}

In this paper we give recurrences to compute the numbers $|\bI_{n}(p)|$ of inversion
sequences avoiding the consecutive pattern $p$, for $p$ of length 3. Most of our recurrences
use the refinement $\bI_{n,k}=\{e\in\bI_{n}:e_{n}=k\}$ of
$\bI_{n}$, which allows us to define
\begin{equation}\label{eq:Ink}
\bI_{n,k}\left(p\right)=\bI_{n,k}\bigcap \bI_{n}\left(p\right),
\end{equation}
for a given pattern $p$. Note that
$\bI_{n,k}\left(p\right)=\emptyset$ for $k\geq n$ and
$\bI_{n}\left(p\right)=\bigcup_{k=0}^{n-1}\bI_{n,k}\left(p\right)$, for any pattern~$p$.

\begin{exa} For the consecutive pattern $p=\underline{001}$, we have
$\left|\bI_{3}\left(\underline{001}\right)\right|=4$, as
$\bI_{3,0}\left(\underline{001}\right)= \{000, 010\}$,
$\bI_{3,1}\left(\underline{001}\right)=\{011\}$ and
$\bI_{3,2}\left(\underline{001}\right)=\{012\}$.
\end{exa}

\begin{table}[htb]
\footnotesize
\centering
 \begin{tabular}{ c@{\hskip 0.2in}c@{\hskip 0.2in}l@{\hskip 0.2in}l }
 \hline
 Pattern $p$ & in the OEIS? & Recurrence for $\left|\bI_{n,k}(p)\right|$ & $\left|\bI_{n}(p)\right|$ coincides with
 \Tstrut\Bstrut\\
 \hline
 $\underline{012}$ & A049774 & $\left|\bI_{n,k}\left(p\right)\right|=\left|\bI_{n-1}\left(p\right)\right|-\sum_{l= 1}^{k-1}\sum_{j= 0}^{l-1}\sum_{i\geq j}\left|\bI_{n-3,i}\left(p\right)\right|$ &
 $\left|S_{n}\left(\underline{321}\right)\right|$ \Tstrut\\
 $\underline{021}$ & A071075 & $
\left|\bI_{n,k}\left(p\right)\right|=\left|\bI_{n-1}\left(p\right)\right|-
(n-2-k)\sum_{j= 0}^{k-1}\left|\bI_{n-2,j}\left(p\right)\right|$
& $\left|S_{n}\left(\underline{132}
 4\right)\right|$  \Tstrut\\
 $\underline{102}$ & New &
 $\left|\bI_{n,k}\left(p\right)\right|=\left|\bI_{n-1}\left(p\right)\right|-
 \sum_{j\geq1}j\left|\bI_{n-2,j}\left(p\right)\right|$ \Tstrut\\
 $\underline{120}$ & A200404 & $\left|\bI_{n,k}\left(p\right)\right|=\left|\bI_{n-1}\left(p\right)\right|-\sum_{j>k}(n-2-j)\left|\bI_{n-2,j}\left(p\right)\right|$ & $\left|S_{n}\left(\underline{143}
 2\right)\right|$ \Tstrut\\
 $\underline{201}$ & New &
 $\left|\bI_{n,k}\left(p\right)\right|=\left|\bI_{n-1}\left(p\right)\right|-k
 \sum_{j>k}\left|\bI_{n-2,j}\left(p\right)\right|$ \Tstrut\\
 $\underline{210}$ & New &
 $\left|\bI_{n,k}\left(p\right)\right|=\left|\bI_{n-1}\left(p\right)\right|-
 \sum_{l=k+1}^{n-4}\sum_{j=l+1}^{n-3}\sum_{i\leq j}\left|\bI_{n-3,i}(p)\right|$ \Tstrut\Bstrut\\
 \hline
 \end{tabular}
\caption{Consecutive patterns of length 3 with no repeated letters. Sequences A049774, A071075 and
A200404 are known to count $\left|S_{n}\left(\underline{321}\right)\right|$, $\left|S_{n}\left(\underline{132}
4\right)\right|$ and $\left|S_{n}\left(\underline{143}
2\right)\right|$, respectively, and exponential generating functions for A049774 and A071075 are known. All other results in the table are new.
}\label{tab1}
\end{table}

A summary of our results for consecutive patterns of length 3 is given in Table~\ref{tab1}
for patterns with no repeated letters, and in Table~\ref{tab2}
for patterns with repeated letters. In particular, we indicate which sequences
match existing sequences in the
On-Line Encyclopedia of Integer Sequences (OEIS)~\cite{OEIS}.
Here, $\underline{132}4$ and $\underline{143}2$ denote {\em vincular} (sometimes called {\em generalized}) permutation patterns, where the entries in underlined positions are required to be adjacent in an occurrence, as defined by Babson and Steingr\'{\i}msson~\cite{Babson}.
We use $S_n(\sigma)$ to denote the set of permutations in $S_n$ that avoid a pattern~$\sigma$.

\begin{table}[htb]
\footnotesize
\centering
 \begin{tabular}{ c@{\hskip 0.2in}c@{\hskip 0.2in}l }
 \hline
 Pattern $p$ & in the OEIS? & Recurrence for $\left|\bI_{n,k}(p)\right|$
 \Tstrut\Bstrut\\
 \hline
 $\underline{000}$ & A052169 \Tstrut & $\left|\bI_{n}(p)\right|=\frac{\left(n+1\right)!-d_{n+1}}{n}$,
 where $d_{n}$ is the number of derangements of [n] \\
 $\underline{001}$ & New &
 $\left|\bI_{n,k}\left(p\right)\right|=\left|\bI_{n-1}\left(p\right)\right|-
 \sum_{j<k}\left|\bI_{n-2,j}\left(p\right)\right|$ \Tstrut\\
 $\underline{010}$ & New &
 $\left|\bI_{n,k}\left(p\right)\right|=\left|\bI_{n-1}\left(p\right)\right|-
 \left(n-2-k\right)\left|\bI_{n-2,k}\left(p\right)\right|$ \Tstrut\\
 $\underline{011}$ & New\Tstrut &
 $\left|\bI_{n,k}\left(p\right)\right|=\left|\bI_{n-1}\left(p\right)\right|-
 \sum_{j<k}\left|\bI_{n-2,j}\left(p\right)\right|$ if $k\neq n-1$, and
 $\left|\bI_{n,n-1}\left(p\right)\right|=\left|\bI_{n-1}(p)\right|$ \\
 $\underline{100} \stackrel{ss}{\sim} \underline{110}$ & New &
 $\left|\bI_{n,k}\left(p\right)\right|=\left|\bI_{n-1}\left(p\right)\right|-
 \sum_{j>k}\left|\bI_{n-2,j}\left(p\right)\right|$ \Tstrut\\
 $\underline{101}$ & New &
 $\left|\bI_{n,k}\left(p\right)\right|=\left|\bI_{n-1}\left(p\right)\right|-
 k\left|\bI_{n-2,k}\left(p\right)\right|$ \Tstrut\Bstrut\\
 \hline
 \end{tabular}
 \caption{Consecutive patterns of length 3 with repeated letters.}\label{tab2}
\end{table}

Another main result of this paper is the classification of consecutive patterns of length 3 and 4 according to the generalized Wilf equivalence classes from Definition~\ref{def:Wilf}. The equivalence classes for length 3 are given by Tables~\ref{tab1} and~\ref{tab2}. Specifically,
the equivalence $\underline{100}\stackrel{ss}{\sim}\underline{110}$ is the only generalized Wilf equivalence among consecutive patterns of length 3; that is, no other two such patterns are even Wilf equivalent.

The next theorem describes the corresponding classes for patterns of length~4. The patterns $p$ are
listed in increasing order of the values of $\left|\bI_n\left(p\right)\right|$, for $n\le10$; that is, from least avoided to most avoided in inversion sequences of length up to 10.
\begin{thm}\label{Equiv4} A complete list of the generalized Wilf equivalences
between consecutive patterns of length $4$ is as follows: \vspace{-6pt}
\begin{multicols}{2}
 \begin{enumerate}[label=(\roman*),itemsep=1ex,leftmargin=1.5cm]
  \item $\underline{0102}\stackrel{ss}{\sim}\underline{0112}$.
  \item $\underline{0021}\stackrel{ss}{\sim}\underline{0121}$.
  \item $\underline{1002}\stackrel{ss}{\sim}\underline{1012}\stackrel{ss}{\sim}\underline{1102}$.
    \item $\underline{0100}\stackrel{ss}{\sim}\underline{0110}$.
      \item $\underline{2013}\stackrel{ss}{\sim}\underline{2103}$.
  \item $\underline{1200}\stackrel{ss}{\sim}\underline{1210} \stackrel{ss}{\sim}\underline{1220}$.
  \item $\underline{0211}\stackrel{ss}{\sim}\underline{0221}$.
  \item $\underline{1000}\stackrel{ss}{\sim}\underline{1110}$.
  \item $\underline{1001}\stackrel{ss}{\sim}\underline{1011}\stackrel{ss}{\sim}\underline{1101}$.
  \item $\underline{2100}\stackrel{ss}{\sim}\underline{2210}$.
  \item $\underline{2001}\stackrel{ss}{\sim}\underline{2011}\stackrel{ss}{\sim}\underline{2101}\stackrel{ss}{\sim}\underline{2201}$.
  \item $\underline{2012}\stackrel{ss}{\sim}\underline{2102}$.
  \item $\underline{2010}\stackrel{ss}{\sim}\underline{2110}\stackrel{ss}{\sim}\underline{2120}$.
 \item $\underline{3012}\stackrel{ss}{\sim}\underline{3102}$.
 \end{enumerate}
 \end{multicols}
\end{thm}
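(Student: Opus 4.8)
\section*{Proof proposal for Theorem~\ref{Equiv4}}

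The plan is to prove the statement in two parts: first establish each listed super-strong Wilf equivalence $p\stackrel{ss}{\sim}p'$ by an explicit bijection, and then show that patterns lying in different classes are not even Wilf equivalent, so that the list is complete.

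For the positive direction I would exploit the uniform structural feature of the list: in every class the patterns share their first and last entries and differ only in the internal positions $2$ and $3$. Motivated by this, for each equivalence I would construct a bijection $\phi\colon\bI_n\to\bI_n$ with the property that $\Em(p,e)=\Em(p',\phi(e))$ for all $e\in\bI_n$; by Definition~\ref{def:Wilf} this immediately yields $p\stackrel{ss}{\sim}p'$ (and hence $\stackrel{s}{\sim}$ and $\sim$). The bijection would act locally, reassigning the internal entry (or entries) of each occurrence window so as to convert the local shape of $p$ into that of $p'$ while leaving the extremal entries, and therefore the comparisons that detect occurrences, untouched. For instance, in item (i) an occurrence of $\underline{0102}$ at position $i$ satisfies $e_i=e_{i+2}<e_{i+1}<e_{i+3}$, and resetting $e_{i+2}$ to the value $e_{i+1}$ produces an occurrence of $\underline{0112}$, the reverse map lowering the corresponding entry again. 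One should ideally isolate a general lemma---that modifying an internal, non-extremal entry of a length-$r$ pattern preserves $\Em$ under suitable hypotheses---and then verify that each of the fourteen equivalences is an instance of it.

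The hard part will be controlling overlaps. Two occurrences of a length-$4$ pattern can share up to three positions, so an entry that is internal to one window may be extremal to a neighbouring one, and a naive entry-by-entry reassignment can then create or destroy occurrences at positions outside the prescribed set $T$. Concretely, for $\underline{0102}$ consecutive occurrences at offset $1$ are impossible (they would force $e_{i+1}<e_{i+3}$ and $e_{i+1}=e_{i+3}$ simultaneously), but offset-$2$ overlaps do occur, and in such an overlap the entry one wishes to modify for the first window is the minimal (extremal) entry of the second. I would handle this by first classifying, for each pattern in the list, exactly which offsets of overlapping occurrences are realizable, and then defining $\phi$ on maximal clusters of mutually overlapping occurrences rather than on individual windows, checking directly that $\phi$ is a well-defined involution that neither gains nor loses occurrences. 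Establishing that this local recipe is globally consistent, injective, and $\Em$-preserving across all fourteen classes is the technical core of the argument.

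For completeness I would argue numerically. Using the recurrences and enumeration techniques developed earlier, I would compute the sequence $\bigl(|\bI_n(p)|\bigr)_{1\le n\le 10}$ for every consecutive pattern $p$ of length $4$. Two patterns can be Wilf equivalent only if these sequences agree for all $n$, so it suffices to exhibit, for each pair of patterns in distinct proposed classes, a value $n\le 10$ at which the counts differ. Ordering the patterns by these values---as in the statement---makes the separation transparent: the computation shows that patterns in the same class produce identical sequences (consistent with the bijections above), while any two patterns in different classes are already distinguished within $n\le 10$. Since $\stackrel{ss}{\sim}$ is the strongest of the three relations, this confirms both that each displayed relation is a genuine super-strong equivalence and that no further generalized Wilf equivalences exist, completing the classification.
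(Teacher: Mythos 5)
Your completeness argument (computing $\lvert\bI_n(p)\rvert$ for $n\le 10$ for all $75$ patterns and separating the classes numerically) is exactly the paper's, and your local-reassignment bijection does work for the nine classes whose patterns are non-overlapping and mutually non-overlapping --- this is precisely Theorem~\ref{thm:general_extend_110_100_biject}, and there the involution $\phi$ with $\Em(p,e)=\Em(p',\phi(e))$ is legitimate. The genuine gap is in your treatment of the remaining classes (i), (viii), (x) and (xiii). You insist throughout on a single bijection preserving the \emph{exact} occurrence set, and propose to salvage it in the overlapping cases by acting on maximal clusters of overlapping occurrences. But no such local recipe preserves $\Em$: changing occurrences creates and destroys occurrences \emph{outside} the windows being modified. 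The paper's own Example~\ref{exa:0102_0112} exhibits this concretely: for $e=0102040523262889$ one has $\Em(\underline{0102},e)=\{1,3,5,9,11\}$, yet changing the occurrences at positions $S=\{3,5,9\}$ yields $e'$ with $\Em(\underline{0112},e')=\{3,5,9,13\}$ --- a new occurrence appears at position $13$ and those at $1$ and $11$ vanish. So "neither gains nor loses occurrences" fails even at the cluster level, and your involution is not well-defined as described. The situation for (viii) and (x) is worse still: $\underline{1000}$ and $\underline{1110}$ are \emph{mutually} overlapping (in $21110$ the first four entries form an occurrence of $\underline{1000}$ and the last four an occurrence of $\underline{1110}$, sharing three entries), so the simultaneous swap of all occurrences of $p$ and $p'$ assigns conflicting values to shared entries.

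The missing idea is the inclusion-exclusion reduction (Lemma~\ref{lem:inclusion_exlusion}): to prove $p\stackrel{ss}{\sim}p'$ it suffices to show
\[
\left|\left\{e\in\bI_{n}:\Em(p,e)\supseteq S\right\}\right|=\left|\left\{e\in\bI_{n}:\Em(p',e)\supseteq S\right\}\right|
\]
for every fixed $S\subseteq[n]$, since M\"obius inversion over the containment order then recovers the counts with $\Em=T$. This weaker target is achievable by a map $\Phi_S$ that changes only the occurrences at positions of $S$ (organized into blocks for the offset-$2$ chains in (i) and (xiii), as in Proposition~\ref{prop:0102_0112}, and using changeability of non-overlapping patterns for (viii) and (x) as in Theorem~\ref{thm:general_extend_110_100_sergi}), precisely because side effects at other positions are tolerated. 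Without this lemma, your programme requires proving global $\Em$-preservation for a cluster-level involution, which the counterexample above shows your proposed maps do not satisfy; you would need either to discover a genuinely different bijection or to add this inclusion-exclusion step to close the argument.
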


Again, all the Wilf equivalences among consecutive patters of length 4 are super-strong Wilf equivalences.
This leads us to speculate the following, in analogy to Nakamura's conjecture for consecutive patterns in permutations~{\cite[Conjecture~5.6]{Nakamura}}, which remains open.

\begin{conj}\label{ConjNakamura} Two consecutive patterns of length $k$ in inversion sequences are
strongly Wilf equivalent if and only if they are Wilf equivalent.
\end{conj}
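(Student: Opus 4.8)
Since super-strong Wilf equivalence implies strong Wilf equivalence, which in turn implies Wilf equivalence (as observed after Definition~\ref{def:Wilf}), the forward implication of the conjecture is immediate, and its entire content is the reverse: that ordinary Wilf equivalence already forces strong Wilf equivalence. My plan is to attack this through a cluster method for consecutive patterns in inversion sequences, in the spirit of the Goulden--Jackson machinery used for words and permutations. A preliminary obstacle is that inversion sequences do not form a free monoid: the constraint $0 \le e_i < i$ makes the alphabet position-dependent. The saving feature is that an occurrence of a consecutive pattern in a window $e_i e_{i+1}\cdots e_{i+r-1}$ depends only on the reduction of that window, which is translation-invariant; only the number of ways to realize a prescribed reduction at a given location feels the absolute positions. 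This suggests tracking a boundary statistic, and I would use the refinement $\bI_{n,k}$ of this paper, carrying the last entry $e_n$ as a catalytic variable throughout.

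Concretely, for a fixed pattern $p$ of length $r$ I would introduce the joint distribution generating function
\[
P_p(x,y,t) = \sum_{n \ge 1}\sum_{k=0}^{n-1} y^{k}\,\frac{x^{n}}{n!} \sum_{e \in \bI_{n,k}} t^{\,|\Em(p,e)|},
\]
together with a cluster generating function $C_p$ counting inversion sequences carrying a distinguished family of mutually overlapping occurrences that covers every position, recorded with a mark $u$ per occurrence. By inclusion--exclusion over the marked occurrences one obtains a functional equation expressing $P_p$ in terms of $C_p$ under the substitution $u = t-1$, obtained by decomposing an inversion sequence into maximal clusters separated by unmarked entries. Setting $t=0$ recovers avoidance, so $|\bI_n(p)|$ is the alternating cluster sum $\sum_m (-1)^m r_p(n,m)$, where $r_p(n,m)$ counts clusters of length $n$ with $m$ marks, while strong Wilf equivalence of $p$ and $p'$ is exactly the equality $r_p(n,m) = r_{p'}(n,m)$ of all cluster numbers, and super-strong equivalence is its positionwise refinement. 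Thus the conjecture becomes the algebraic assertion that equality of the alternating sums for every $n$ forces equality of the individual cluster numbers.

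This reduction isolates the genuine difficulty, which is the same one that keeps Nakamura's permutation conjecture open: the specialization at $u=-1$ does not, by itself, recover $C_p(x,u)$ for all $u$. The strategy I would pursue to exploit the additional rigidity of inversion sequences is to build the cluster numbers from local overlap data. First, the number of occurrences of $p$ in a single length-$r$ window is a direct count, which together with the free-entry propagation should show that the low-order cluster numbers, and hence the low-order moments of $|\Em(p,\cdot)|$, are Wilf invariants. Second, and crucially, I would try to prove that Wilf-equivalent consecutive patterns must share the same \emph{autocorrelation} set, namely the set of shifts at which $p$ can overlap a copy of itself consistently with reductions; the higher cluster numbers $r_p(n,m)$ for $m \ge 2$ are assembled, transfer-matrix style, from exactly this overlap data. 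Equality of the overlap sets together with the low-order invariants would then propagate to equality of all $r_p(n,m)$.

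The hard part is this second step: ruling out that two consecutive patterns with genuinely different overlap structure nevertheless produce identical avoidance sequences $\bigl(|\bI_n(p)|\bigr)_{n \ge 1}$. Since proving this in full generality would settle the inversion-sequence analogue of a long-open problem, the realistic target of the plan is twofold. On one hand, one can establish the conjecture for structured families---patterns sharing a prescribed autocorrelation set, or the short patterns of Theorem~\ref{Equiv4}, for which every Wilf equivalence is in fact super-strong and is witnessed by explicit occurrence-set-preserving bijections combined with a finite verification. On the other hand, the cluster framework reduces the general conjecture to the single clean statement that the autocorrelation set of a consecutive pattern is determined by its avoidance numbers, which I would isolate as the key lemma to be proved.
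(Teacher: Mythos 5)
The statement you are addressing is Conjecture~\ref{ConjNakamura}: the paper does not prove it, and indeed presents it as the open analogue of Nakamura's conjecture for permutations. The paper offers only the trivial direction (strong Wilf equivalence implies Wilf equivalence, noted after Definition~\ref{def:Wilf}) together with supporting evidence, namely that every Wilf equivalence among consecutive patterns of length $3$ (the single equivalence $\underline{100}\stackrel{ss}{\sim}\underline{110}$) and of length $4$ (Theorem~\ref{Equiv4}, completed by a brute-force comparison of $\left|\bI_n(p)\right|$ for $n\le 10$) is in fact super-strong. Your proposal is likewise not a proof: you establish only the trivial implication and then outline a cluster-method program whose decisive step --- that Wilf equivalence forces equality of autocorrelation sets, and that overlap data together with low-order invariants determines all cluster numbers $r_p(n,m)$ --- you explicitly leave open. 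Since you yourself reduce the conjecture to that unproved lemma, the gap is not incidental; it is the entire content of the statement, and your own text concedes that the specialization $u=-1$ obstruction is exactly what keeps the permutation analogue open.

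Beyond incompleteness, two steps of the program would fail as stated. First, in inversion sequences the cluster numbers are not assembled ``transfer-matrix style'' from the set of self-overlap shifts alone: when two occurrences of $p$ overlap, the shared entries impose equalities and inequalities whose consistent completions depend on the full order structure of the joined windows (especially for patterns with repeated letters, where several reductions of the union are compatible with the same shift) and on absolute position through the constraint $e_i<i$; so two patterns with identical autocorrelation sets can admit different numbers of realizations of the same cluster shape, and equality of overlap sets plus low moments would not propagate to equality of all $r_p(n,m)$. Second, your framework applies verbatim to any avoidance notion defined by a translation-invariant window condition, in particular to the consecutive patterns of binary relations mentioned in the paper's introduction; but the paper reports, citing its sequel, that in that setting Wilf equivalence and strong Wilf equivalence genuinely differ. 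Hence no argument at the level of formal generality you propose can succeed: any proof of the conjecture must exploit features special to reductions of inversion-sequence patterns, so the ``key lemma'' you isolate cannot be established by the cluster machinery alone, and it is not even clear that the autocorrelation set is the right invariant to target.
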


It is in fact possible that a stronger version of Conjecture~\ref{ConjNakamura}
holds, namely that all three types of generalized Wilf equivalence for consecutive patterns in inversion sequences (see Definition~\ref{def:Wilf})
coincide. The results in this paper show that this is the case for consecutive patterns
of length at most~$4$.

On the other hand, following Martinez and Savage~\cite{MartinezSavageII}, one can reframe the notion of patterns of length 3
to consider triples of binary relations among the entries in an occurrence. In a followup paper~\cite{AuliElizaldeII}, we study the consecutive analogue of this framework. Perhaps surprisingly, in the setting of consecutive patterns of relations, the notions of Wilf equivalence and strong Wilf equivalence do not coincide in general.

The rest of the paper is organized as follows.
In Section~\ref{sec:conseclength3} we study consecutive patterns of length~3, proving
the results summarized in Tables~\ref{tab1} and~\ref{tab2}.
Section~\ref{sec:conseclength4} is devoted to the classification of
consecutive patterns of length 4 into generalized Wilf equivalence classes, proving Theorem~\ref{Equiv4}. Some of our results apply to consecutive patterns of arbitrary length, and will be given in Proposition~\ref{prop_general_0s}, and Theorems~\ref{thm:generalize_0102_0112} and ~\ref{thm:generalize_2010_2120_2110}.

\section{Consecutive Patterns of Length 3}\label{sec:conseclength3}

In this section we enumerate inversion sequences avoiding each given consecutive pattern of length~3. Tables~\ref{tab_first8_1} and~\ref{tab_first8_2} provide the first 8 terms of the
sequence $\left|\bI_{n}(p)\right|$ for patterns of length~3 without and with repeated letters, respectively.
There are 13 consecutive patterns of length 3, which fall into 12 Wilf equivalence classes, which are also super-strong Wilf equivalence classes.

\begin{table}[h]
\footnotesize
\centering
 \begin{tabular}{ c@{\hskip 0.5in}l@{\hskip 0.5in}l }
 \hline
 Pattern $p$ & $\left|\bI_{n}(p)\right|$ for $1\le n\le 8$
 \Tstrut\Bstrut\\
 \hline
 $\underline{012}$ & 1, 2, 5, 17, 70, 349, 2017, 13358  \Tstrut\\
 $\underline{021}$ & 1, 2, 6, 23, 107, 585, 3671, 25986 \Tstrut\\
 $\underline{102}$ & 1, 2, 6, 22, 96, 492, 2902, 19350 \Tstrut\\
 $\underline{120}$ & 1, 2, 6, 23, 107, 582, 3622, 25369  \Tstrut\\
 $\underline{201}$ & 1, 2, 6, 24, 118, 684, 4548, 34036  \Tstrut\\
 $\underline{210}$ & 1, 2, 6, 24, 118, 684, 4554, 34192  \Tstrut\Bstrut\\
 \hline
 \end{tabular}
\caption{First 8 terms of the sequence $\left|\bI_{n}(p)\right|$ for consecutive patterns $p$ of length 3 with no repeated letters.}\label{tab_first8_1}
\end{table}

\begin{table}[htbp]
\footnotesize
\centering
 \begin{tabular}{ c@{\hskip 0.5in}l@{\hskip 0.5in}p{7.4cm} }
 \hline
 Pattern $p$ & $\left|\bI_{n}(p)\right|$ for $1\le n\le 8$
 \Tstrut\Bstrut\\
 \hline
 $\underline{000}$ & 1, 2, 5, 19, 91, 531, 3641, 28673 \Tstrut\\
 $\underline{001}$ & 1, 2, 4, 11, 42, 210, 1292, 9352 \Tstrut\\
 $\underline{010}$ & 1, 2, 5, 17, 76, 417, 2701, 20199 \Tstrut\\
 $\underline{011}$ & 1, 2, 5, 17, 75, 407, 2621, 19524 \Tstrut\\
 $\underline{100} \stackrel{ss}{\sim} \underline{110}$ & 1, 2, 6, 23, 109, 618, 4098, 31173 \Tstrut\\
 $\underline{101}$ & 1, 2, 6, 23, 109, 619, 4113, 31352 \Tstrut\Bstrut\\
 \hline
 \end{tabular}
 \caption{First 8 terms of the sequence $\left|\bI_{n}(p)\right|$ for consecutive patterns $p$ of length 3 with repeated letters.}\label{tab_first8_2}
\end{table}

\subsection{The pattern $\underline{000}$}

The following result provides a method to compute
$\left|\bI_{n}\left(\underline{000}\right)\right|$ recursively.

\begin{prop}\label{prop_000} The sequence
$\left|\bI_{n}\left(\underline{000}\right)\right|$ satisfies the recurrence
\[
\left|\bI_{n}\left(\underline{000}\right)\right|=(n-1)\left|\bI_{n-1}\left(\underline{000}\right)\right|+(n-2)\left|\bI_{n-2}\left(\underline{000}\right)\right|,
\]
for $n\geq 3$, with initial terms $\left|\bI_{1}\left(\underline{000}\right)\right|=1$
and $\left|\bI_{2}\left(\underline{000}\right)\right|=2$.
\end{prop}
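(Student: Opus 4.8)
The plan is to prove the recurrence by a direct combinatorial decomposition of $\bI_n(\underline{000})$ according to the relationship between the last two entries. Recall that avoiding $\underline{000}$ means precisely that no three consecutive entries are equal. The guiding observation is that deleting a suffix from an inversion sequence preserves avoidance, so I can build every avoiding sequence of length $n$ by appending one or two entries to a shorter avoiding sequence, provided no new occurrence of $\underline{000}$ is created at the right end. Throughout I assume $n\geq 3$, so that the entries $e_{n-2}, e_{n-1}, e_n$ are all defined.

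First I would split $\bI_n(\underline{000})$ into the set $X$ of sequences with $e_{n-1}\neq e_n$ and the set $Y$ of sequences with $e_{n-1}=e_n$, so that $|\bI_n(\underline{000})|=|X|+|Y|$. For $X$, deleting the last entry yields a sequence in $\bI_{n-1}(\underline{000})$; conversely, given any $f\in\bI_{n-1}(\underline{000})$ I may append any value in $\{0,1,\dots,n-1\}\setminus\{f_{n-1}\}$. Since the appended value differs from $f_{n-1}$, the only new window $f_{n-2}f_{n-1}v$ is not an occurrence of $\underline{000}$, so all $n-1$ choices are valid. This establishes a bijection and gives $|X|=(n-1)\,|\bI_{n-1}(\underline{000})|$, the first term of the recurrence.

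For $Y$, the key point is that if $e_{n-1}=e_n$ and $e$ avoids $\underline{000}$, then necessarily $e_{n-2}\neq e_{n-1}$, since otherwise $e_{n-2}e_{n-1}e_n$ would be an occurrence. Deleting the last two (equal) entries then yields a sequence in $\bI_{n-2}(\underline{000})$; conversely, given $g\in\bI_{n-2}(\underline{000})$ I may append two copies of any value $v\in\{0,1,\dots,n-2\}\setminus\{g_{n-2}\}$. Because $v\neq g_{n-2}$, neither of the two new windows $g_{n-3}g_{n-2}v$ nor $g_{n-2}vv$ is an occurrence of $\underline{000}$, so all $n-2$ choices are valid, giving $|Y|=(n-2)\,|\bI_{n-2}(\underline{000})|$. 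Adding the two counts produces exactly the claimed recurrence, and the initial values $|\bI_1(\underline{000})|=1$ and $|\bI_2(\underline{000})|=2$ are immediate, as no sequence of length at most $2$ can contain $\underline{000}$.

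Since every step is elementary, I do not expect a genuine obstacle; the only point requiring care is the verification that appending entries creates no new occurrence of $\underline{000}$. This relies precisely on forcing the appended value(s) to differ from the last surviving entry, together with the observation that avoidance already forces $e_{n-2}\neq e_{n-1}$ in case $Y$. A final minor check is that both bijections behave correctly in the smallest case $n=3$, where $\bI_{n-2}=\bI_1=\{0\}$, the value $g_{n-2}$ is well defined, and the window starting at position $n-3$ simply does not exist.
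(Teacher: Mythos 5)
Your proposal is correct and follows essentially the same route as the paper's proof: the same split according to whether $e_{n-1}=e_n$, with the same counts $(n-1)\left|\bI_{n-1}\left(\underline{000}\right)\right|$ and $(n-2)\left|\bI_{n-2}\left(\underline{000}\right)\right|$ for the two cases. Your additional verifications (that appending creates no new occurrence, and the $n=3$ boundary check) are sound refinements of details the paper leaves implicit.
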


\begin{proof} It is clear that $\left|\bI_{n}\left(\underline{000}\right)\right|=\left|\bI_{n}\right|$,
for $n<3$,
so $\left|\bI_{1}\left(\underline{000}\right)\right|=1$
and $\left|\bI_{2}\left(\underline{000}\right)\right|=2$. For $n\ge3$, let us count the inversion sequences
$e\in\bI_{n}\left(\underline{000}\right)$ by considering two cases.

If $e_{n-1}\neq e_{n}$, then there are $\left|\bI_{n-1}\left(\underline{000}\right)\right|$ many ways
to choose the first $n-1$ entries of $e$ and $n-1$ possibilities for $e_{n}$, since $e_{n}$ may be an element of $\left\{0,1,\ldots,n-1\right\}\setminus \left\{e_{n-1}\right\}$.

If $e_{n-1}=e_{n}$, then it must be that $e_{n-2}\neq e_{n-1}$. So there are
$\left|\bI_{n-2}\left(\underline{000}\right)\right|$ many ways to choose the first $n-2$
entries of $e$ and $n-2$ possibilities for $e_{n-1}=e_{n}$, which can be any element in $\left\{0,1,\ldots,n-2\right\}\setminus \left\{e_{n-2}\right\}$.
Since every $e\in\bI_{n}\left(\underline{000}\right)$ falls into one of these two cases, the stated recurrence follows.
\end{proof}

The following result generalizes Proposition~\ref{prop_000} to patterns of arbitrary length, providing a recurrence
for the number of inversion sequences avoiding the consecutive pattern consisting of $r$ zeros. We denote this pattern by $\underline{0}^{r}$.

\begin{prop}\label{prop_general_0s}
Let $n\geq r\geq 2$. The sequence $\left|\bI_{n}\left(\underline{0}^{r}\right)\right|$
satisfies the recurrence
\[
\left|\bI_{n}\left(\underline{0}^{r}\right)\right|=\sum_{j=1}^{r-1}(n-j)\left|\bI_{n-j}\left(\underline{0}^{r}\right)\right|,
\]
with initial conditions $\left|\bI_{n}\left(\underline{0}^{r}\right)\right|=n!$, for $1\leq n<r$.
\end{prop}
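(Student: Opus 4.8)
The plan is to generalize the two-case argument from the proof of Proposition~\ref{prop_000} by partitioning $\bI_n(\underline{0}^r)$ according to the length of the \emph{final run} of equal entries, that is, the largest $j$ for which $e_{n-j+1}=e_{n-j+2}=\cdots=e_n$. Since an occurrence of $\underline{0}^r$ is precisely a block of $r$ consecutive equal entries, avoiding $\underline{0}^r$ amounts to requiring that every maximal run of equal entries have length at most $r-1$. In particular, for $e\in\bI_n(\underline{0}^r)$ the final run length $j$ satisfies $1\le j\le r-1$; and when $n\ge r$ we have $j\le r-1<n$, so the position $n-j\ge1$ always exists.

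For each fixed $j\in\{1,\dots,r-1\}$, I would exhibit a bijection between the sequences in $\bI_n(\underline{0}^r)$ whose final run has length exactly $j$ and pairs consisting of a sequence $e_1\cdots e_{n-j}\in\bI_{n-j}(\underline{0}^r)$ together with the common value $v=e_{n-j+1}=\cdots=e_n$ of the run. This value is constrained in two ways: the inversion-sequence condition at the first position of the run forces $v<n-j+1$, so $v\in\{0,1,\dots,n-j\}$; and to ensure the run has length \emph{exactly} $j$, we require $v\ne e_{n-j}$. Since $e_{n-j}<n-j$ always lies in $\{0,\dots,n-j\}$, exactly one of the $n-j+1$ admissible values is excluded, leaving $n-j$ choices for $v$. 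The resulting sequence avoids $\underline{0}^r$ because its prefix does, the final run has length $j\le r-1$, and no run straddles position $n-j$ as $e_{n-j}\ne v$. This gives $(n-j)\,|\bI_{n-j}(\underline{0}^r)|$ sequences for each $j$, and summing over $j=1,\dots,r-1$ yields the recurrence. The initial conditions are immediate: for $1\le n<r$ no sequence can contain $r$ consecutive equal entries, so $\bI_n(\underline{0}^r)=\bI_n$ and $|\bI_n|=n!$.

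The step requiring the most care---more a bookkeeping subtlety than a genuine obstacle---is justifying the uniform count of $n-j$ choices for $v$. One must check both that the inversion-sequence bound caps $v$ at $n-j$ and that excluding the single value $e_{n-j}$ is exactly what forces the final-run lengths to partition $\bI_n(\underline{0}^r)$ into disjoint classes, so that the cases neither overlap nor leave gaps. That the count stays uniform across all prefixes rests on the observation that $e_{n-j}$ always lies within $\{0,\dots,n-j\}$, so precisely one admissible value is forbidden no matter what the prefix is.
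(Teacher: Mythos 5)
Your proof is correct and essentially identical to the paper's: both partition $\bI_{n}\left(\underline{0}^{r}\right)$ according to the exact length $j\in\{1,\dots,r-1\}$ of the final run of equal entries and count $(n-j)\left|\bI_{n-j}\left(\underline{0}^{r}\right)\right|$ sequences in each class, with the factor $n-j$ coming from choosing the run's common value in $\left\{0,1,\dots,n-j\right\}\setminus\left\{e_{n-j}\right\}$. The extra bookkeeping you flag (the cap $v\le n-j$ from the inversion-sequence condition and the exclusion of $e_{n-j}$ making the classes disjoint and exhaustive) is exactly what the paper's argument uses.
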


\begin{proof} Let $A_{j}$ denote the subset of $\bI_{n}\left(\underline{0}^{r}\right)$ consisting of sequences $e$ for which
$j$ is the largest integer such that $e_{n-j+1}=e_{n-j+2}=\cdots=e_{n}$. Given that
$\bI_{n}\left(\underline{0}^{r}\right)=\bigsqcup_{j=1}^{r-1}A_{j}$ (which denotes a disjoint union), we see that
$\left|\bI_{n}\left(\underline{0}^{r}\right)\right|=\sum_{j=1}^{r-1}\left|A_{j}\right|$.

Fix $1\leq j\leq r-1$. Let us construct a sequence $e\in A_{j}$. There are $\left|\bI_{n-j}\left(\underline{0}^{r}\right)\right|$ possibilities
for the first $n-j$ entries of $e$. Once these entries are determined, we have $n-j$ choices for the remaining (repeated) entry
$e_{n-j+1}=e_{n-j+2}=\cdots=e_{n}$, which can be any element in $\left\{0,1,\ldots,n-j\right\}\setminus \left\{e_{n-j}\right\}$, since $e_{n-j+1}\leq n-j$. Given that every sequence in $A_{j}$ may be constructed this way and every sequence constructed
in this manner is in $A_{j}$, we deduce that $\left|A_{j}\right|=(n-j)\left|\bI_{n-j}\left(\underline{0}^{r}\right)\right|$, which concludes the proof.
\end{proof}

A derangement is a permutation without fixed points. The next result relates
inversion sequences avoiding the pattern $\underline{000}$ to derangements.

\begin{cor}\label{cor_000}
Let $d_n$ be the number of derangements of length $n$. For $n\geq 1$,
\[
\left|\bI_{n}\left(\underline{000}\right)\right|=\frac{(n+1)!-d_{n+1}}{n}.
\]
\end{cor}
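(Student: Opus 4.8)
The plan is to verify that the proposed closed form satisfies the same recurrence and initial conditions as $\left|\bI_n\left(\underline{000}\right)\right|$, and then conclude by strong induction on $n$. Write $a_n=\left|\bI_n\left(\underline{000}\right)\right|$ and set $b_n=\frac{(n+1)!-d_{n+1}}{n}$ for the candidate expression; the goal is to show $a_n=b_n$ for all $n\geq1$.

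First I would dispose of the base cases using the derangement values $d_2=1$ and $d_3=2$: these give $b_1=(2!-1)/1=1$ and $b_2=(3!-2)/2=2$, matching the initial terms $a_1=1$ and $a_2=2$ recorded in Proposition~\ref{prop_000}. For the inductive step, fix $n\geq3$ and assume $a_{n-1}=b_{n-1}$ and $a_{n-2}=b_{n-2}$. Substituting these into the recurrence $a_n=(n-1)a_{n-1}+(n-2)a_{n-2}$ of Proposition~\ref{prop_000}, the prefactors cancel the denominators $n-1$ and $n-2$, yielding
\[
a_n=(n!-d_n)+\bigl((n-1)!-d_{n-1}\bigr)=n!+(n-1)!-d_n-d_{n-1}.
\]
It then remains to check that this equals $b_n$. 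Multiplying the desired identity $b_n=a_n$ through by $n$ and using $n\cdot n!+n\cdot(n-1)!=(n+1)!$, everything collapses to the single relation
\[
d_{n+1}=n\,(d_n+d_{n-1}).
\]

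This last relation is the only ingredient beyond routine bookkeeping: it is exactly the classical derangement recurrence $d_m=(m-1)(d_{m-1}+d_{m-2})$ with $m=n+1$, so it holds for all $n\geq2$. Invoking it completes the inductive step, and hence the proof. I expect no genuine obstacle here; the only point demanding care is matching the shifted indices in $b_n$ with those in the derangement recurrence, so that the factorial terms and the derangement terms line up correctly after clearing denominators.
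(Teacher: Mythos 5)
Your proof is correct and follows essentially the same route as the paper: strong induction on $n$ using the recurrence of Proposition~\ref{prop_000}, with the denominators cancelling and the classical derangement recurrence $d_{n+1}=n(d_n+d_{n-1})$ supplying the final identity. The index bookkeeping (including the base values $d_2=1$, $d_3=2$) all checks out.
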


\begin{proof} We proceed by induction on $n$. The cases $n=1,2$ are straightforward to verify, so let $n\geq 3$.
Suppose that $\left|\bI_{j}\left(\underline{000}\right)\right|=\left((j+1)!-d_{j+1}\right)/j$,
for $1\leq j<n$. By Proposition~\ref{prop_000},
\[
\left|\bI_{n}\left(\underline{000}\right)\right|=(n-1)\left|\bI_{n-1}\left(\underline{000}\right)\right|+(n-2)\left|\bI_{n-2}\left(\underline{000}\right)\right|,
\]
so it follows from the inductive hypothesis that
\begin{align*}
\left|\bI_{n}\left(\underline{000}\right)\right| &= (n-1)\frac{n!-d_{n}}{n-1}+(n-2)\frac{(n-1)!-d_{n-1}}{n-2}\\
&= n!-d_{n}+(n-1)!-d_{n-1}\\
&= \frac{(n+1)!-n(d_{n}+d_{n-1})}{n}.
\end{align*}
It is well-known that the number of derangements satisfies the recurrence $d_{n}=(n-1)(d_{n-1}+d_{n-2})$. This implies that
$\left|\bI_{n}\left(\underline{000}\right)\right|=\left((n+1)!-d_{n+1}\right)/n$.
\end{proof}

The sequence $((n+1)!-d_{n+1})/n$ is listed as A052169 in~\cite{OEIS}. It would be interesting to find a direct combinatorial proof of Corollary~\ref{cor_000}, such as a bijection between $\bI_{n}\left(\underline{000}\right)\times[n]$ and the set of permutations in $S_{n+1}$ with at least one fixed point.

\subsection{The patterns $\underline{100}$ and $\underline{110}$}

Next we prove that patterns $\underline{100}$ and $\underline{110}$ are Wilf equivalent.
We begin by finding a recurrence for $\left|\bI_{n,k}\left(\underline{110}\right)\right|$, as defined in Equation~\eqref{eq:Ink}. The idea is to exploit the fact that if $e=e_{1}e_{2}\dots e_{n}\in \bI_{n}\left(\underline{110}\right)$, then
$e_{1}e_{2}\dots e_{n-1}\in \bI_{n-1}\left(\underline{110}\right)$, as we implicitly did in the proof of
Proposition~\ref{prop_000}. Throughout this section, we use the convention that
\begin{equation}\label{eq:I0} \left|\bI_{0}\left(p\right)\right|=1,
\end{equation} for any pattern $p$.
This value also serves as an initial term in most of the recurrences below, even when not explicitly stated. We also use the convention that $\left|\bI_{n,j}\left(p\right)\right|=0$, if $n<0$.

\begin{prop}\label{prop_110}
Let $n\geq 1$ and $0\leq k<n$. The sequence $\left|\bI_{n,k}\left(\underline{110}\right)\right|$ satisfies the recurrence
\[
\left|\bI_{n,k}\left(\underline{110}\right)\right|=\left|\bI_{n-1}\left(\underline{110}\right)\right|-\sum_{j=k+1}^{n-3}\left|\bI_{n-2,j}\left(\underline{110}\right)\right|.
\]
\end{prop}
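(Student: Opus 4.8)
The plan is to count $\left|\bI_{n,k}\left(\underline{110}\right)\right|$ by building an inversion sequence $e = e_1 e_2 \dots e_n$ with $e_n = k$ from one of length $n-1$, exactly as in the proof of Proposition~\ref{prop_000}. The key observation is that if $e \in \bI_n\left(\underline{110}\right)$, then its prefix $e_1 e_2 \dots e_{n-1}$ lies in $\bI_{n-1}\left(\underline{110}\right)$, since deleting the last entry cannot create an occurrence of a consecutive pattern. So I would start from an arbitrary sequence $f = e_1 e_2 \dots e_{n-1} \in \bI_{n-1}\left(\underline{110}\right)$ and ask how many of the $n$ legal values $e_n = k \in \{0, 1, \dots, n-1\}$ extend $f$ to a sequence still avoiding $\underline{110}$.

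The crucial point is that appending $e_n = k$ can only create a \emph{new} occurrence of $\underline{110}$ in the last three positions, namely at $e_{n-2} e_{n-1} e_n$. This triple reduces to $\underline{110}$ precisely when $e_{n-2} = e_{n-1} > e_n = k$. Therefore, for a fixed prefix $f$, the value $k$ is \emph{forbidden} if and only if $e_{n-2} = e_{n-1}$ and $e_{n-1} > k$. First I would handle the generic prefixes: if $e_{n-2} \neq e_{n-1}$, then every $k \in \{0, \dots, n-1\}$ is allowed, contributing one valid extension for each $k$. These prefixes number $\left|\bI_{n-1}\left(\underline{110}\right)\right| - (\text{number with } e_{n-2}=e_{n-1})$. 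Next I would handle the prefixes with $e_{n-2} = e_{n-1} = j$ for some $j$: such a prefix forbids exactly the values $k < j$, i.e., it allows $k$ precisely when $k \geq j$ (and of course $k \leq n-1$).

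Assembling the count for a fixed $k$, I would write $\left|\bI_{n,k}\left(\underline{110}\right)\right|$ as the number of prefixes $f \in \bI_{n-1}\left(\underline{110}\right)$ that permit the extension by $k$. This equals the total $\left|\bI_{n-1}\left(\underline{110}\right)\right|$ minus the number of prefixes that forbid $k$, and a prefix forbids $k$ exactly when $e_{n-2} = e_{n-1} = j$ for some $j > k$. I would then count, for each such $j$, the prefixes in $\bI_{n-1}\left(\underline{110}\right)$ with $e_{n-2} = e_{n-1} = j$: since $e_{n-2} = e_{n-1}$ is a repeated pair, the prefix $e_1 \dots e_{n-2}$ already determines everything, and avoidance of $\underline{110}$ in $\bI_{n-1}$ together with $e_{n-1} = e_{n-2}$ means the sequence $e_1 \dots e_{n-2}$ must itself avoid $\underline{110}$ and end in $e_{n-2} = j$, so the count of such prefixes is $\left|\bI_{n-2,j}\left(\underline{110}\right)\right|$. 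Summing over the forbidden range $j = k+1, \dots, n-3$ (the upper limit $n-3$ arising because $e_{n-1} = j \leq n-2$, hence $j = e_{n-2} \leq n-3$) yields exactly the subtracted term $\sum_{j=k+1}^{n-3}\left|\bI_{n-2,j}\left(\underline{110}\right)\right|$, giving the stated recurrence.

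The main obstacle I anticipate is getting the index bookkeeping exactly right, particularly the upper summation limit: I must verify carefully that requiring $e_{n-2} = e_{n-1} = j$ forces $j \leq n-3$ (because $e_{n-1} < n-1$ gives $e_{n-1} \leq n-2$, and then the shared value at position $n-2$ satisfies $e_{n-2} < n-2$, so $j \leq n-3$), and confirm that the conventions $\left|\bI_{n,j}\left(p\right)\right| = 0$ for $n < 0$ and the edge cases for small $n$ or $k$ near $n-1$ are consistent with an empty or truncated sum. A secondary subtlety is confirming that no occurrence of $\underline{110}$ can be created at positions other than the final triple; this follows because appending $e_n$ affects only triples ending at position $n$, and any such occurrence must sit in positions $n-2, n-1, n$.
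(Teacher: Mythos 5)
Your proposal is correct and follows essentially the same route as the paper's proof: both fix the last entry $e_n=k$, observe that the only possible new occurrence of $\underline{110}$ sits in positions $n-2,n-1,n$, and subtract the prefixes with $e_{n-2}=e_{n-1}=j$ for $k<j\leq n-3$, identifying these with $\bI_{n-2,j}\left(\underline{110}\right)$ (the paper packages them as sets $A_{n-1,j}$). Your index bookkeeping, including the upper limit $j\leq n-3$ and the empty-sum edge cases, matches the paper's argument exactly.
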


\begin{proof} It is straightforward to verify that the recurrence holds for $n=1,2$ (the sum on the right-hand side is empty in these cases). Assume $n\geq 3$. For $j\le n-3$, let
\[
A_{n-1,j}=\left\{e\in\bI_{n-1,j}\left(\underline{110}\right):e_{n-2}=j\right\}.
\]
Note that $e\in\bI_{n,k}\left(\underline{110}\right)$
if and only if $e_{1}e_{2}\dots e_{n-1}\in \bI_{n-1}\left(\underline{110}\right)\setminus \bigsqcup_{j=k+1}^{n-3}A_{n-1,j}$ and $e_{n}=k$. Hence,
\[
\left|\bI_{n,k}\left(\underline{110}\right)\right|=\left|\bI_{n-1}\left(\underline{110}\right)\right|-\sum_{j=k+1}^{n-3}\left|A_{n-1,j}\right|.
\]

It remains to determine $\left|A_{n-1,j}\right|$, for $k+1\leq j\leq n-3$. Since $e\in A_{n-1,j}$ if
and only if $e_{1}e_{2}\dots e_{n-2}\in \bI_{n-2,j}\left(\underline{110}\right)$ and $e_{n-1}=j$, we deduce that
$\left|A_{n-1,j}\right|=\left|\bI_{n-2,j}\left(\underline{110}\right)\right|$.
\end{proof}

It is clear by definition that $\left|\bI_{n}\left(p\right)\right|=\sum_{k=0}^{n-1}\left|\bI_{n,k}\left(p\right)\right|$ for any pattern $p$, so
Proposition~\ref{prop_110} allows us to compute $\left|\bI_{n}\left(\underline{110}\right)\right|$ recursively.
We now show that the number of inversion sequences avoiding the pattern $\underline{100}$ satisfies the same recurrence.

\begin{prop}\label{prop_100}
Let $n\geq 1$ and $0\leq k<n$. The sequence $\left|\bI_{n,k}\left(\underline{100}\right)\right|$ satisfies the recurrence
\[
\left|\bI_{n,k}\left(\underline{100}\right)\right|=\left|\bI_{n-1}\left(\underline{100}\right)\right|-\sum_{j=k+1}^{n-3}\left|\bI_{n-2,j}\left(\underline{100}\right)\right|.
\]
\end{prop}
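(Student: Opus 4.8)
The plan is to follow the structure of the proof of Proposition~\ref{prop_110} almost line for line, with the roles of the repeated and the distinct entries exchanged. First I would pin down when appending a letter creates an occurrence of $\underline{100}$ at the final three positions. Since the reduction of $e_{n-2}e_{n-1}e_n$ equals $\underline{100}$ exactly when $e_{n-2}>e_{n-1}=e_n$, a sequence $e\in\bI_n$ with $e_n=k$ has an occurrence of $\underline{100}$ ending at position $n$ if and only if $e_{n-1}=k$ and $e_{n-2}>k$.

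Next, using the fact (already exploited for $\underline{110}$) that the prefix $e_1e_2\dots e_{n-1}$ of any $e\in\bI_n(\underline{100})$ lies in $\bI_{n-1}(\underline{100})$, I would describe $\bI_{n,k}(\underline{100})$ as the set obtained by appending $e_n=k$ to a prefix in $\bI_{n-1}(\underline{100})$ that is not ``bad,'' where by the previous paragraph the bad prefixes are exactly those with $e_{n-1}=k$ and $e_{n-2}>k$. Defining, for $k+1\le j\le n-3$,
\[
A_{n-1,j}=\left\{e\in\bI_{n-1,k}(\underline{100}):e_{n-2}=j\right\},
\]
the bad prefixes form the disjoint union $\bigsqcup_{j=k+1}^{n-3}A_{n-1,j}$, the upper limit $n-3$ arising from $e_{n-2}=j<n-2$. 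Hence $\left|\bI_{n,k}(\underline{100})\right|=\left|\bI_{n-1}(\underline{100})\right|-\sum_{j=k+1}^{n-3}\left|A_{n-1,j}\right|$, with the small cases $n=1,2$ (where the sum is empty) checked directly.

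Finally I would evaluate $\left|A_{n-1,j}\right|$ via the truncation map $e_1\dots e_{n-1}\mapsto e_1\dots e_{n-2}$, which I claim is a bijection from $A_{n-1,j}$ onto $\bI_{n-2,j}(\underline{100})$. Truncation clearly sends $A_{n-1,j}$ into $\bI_{n-2,j}(\underline{100})$, and its inverse appends the letter $k$; the one point requiring care is to check that this appending introduces no new occurrence of $\underline{100}$, which could only occur at positions $n-3,n-2,n-1$. Such an occurrence would force $e_{n-2}=e_{n-1}$, that is $j=k$, which is excluded by $j\ge k+1$. Thus $\left|A_{n-1,j}\right|=\left|\bI_{n-2,j}(\underline{100})\right|$, and the stated recurrence follows.

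The main obstacle is precisely this last verification. For $\underline{110}$ the repeated pair sits in positions $n-2,n-1$ and the distinct smaller entry is appended last, whereas for $\underline{100}$ the repeated pair is $e_{n-1}=e_n$ and it is the larger, distinct entry $e_{n-2}$ that must be tracked and summed over. I would therefore be careful that the bookkeeping of which entry equals $k$ and which ranges over $j$ is done consistently, and that it is the strict inequality $j>k$---rather than mere distinctness---that makes the truncation bijection preserve $\underline{100}$-avoidance.
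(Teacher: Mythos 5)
Your proof is correct and takes essentially the same approach as the paper's: both remove the last entry $e_n=k$, identify the forbidden prefixes as those in $\bI_{n-1,k}(\underline{100})$ with $e_{n-2}=j>k$ (the paper's set $C_{n-1,k}$, which your sets $A_{n-1,j}$ simply partition by the value of $j$), and count them via the truncation/appending correspondence with $\bI_{n-2,j}(\underline{100})$. Your explicit check that appending $k$ creates no new occurrence precisely because $j\geq k+1$ makes rigorous a step the paper leaves implicit, so nothing is missing.
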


\begin{proof} The result clearly holds for $n=1,2$, so we assume $n\geq 3$. Let
\[
C_{n-1,k}=\left\{e\in\bI_{n-1,k}\left(\underline{100}\right):e_{n-2}>e_{n-1}\right\}.
\]

If $e\in\bI_{n,k}\left(\underline{100}\right)$, then $e_{1}e_{2}\dots e_{n-1}\in\bI_{n-1}\left(\underline{100}\right)$.
Additionally, since $e_{n-2}e_{n-1}e_{n}$ is not an occurrence of $\underline{100}$, it is not possible to
have $e_{n-2}>e_{n-1}=k$. Thus, $e_{1}e_{2}\dots e_{n-1}\in \bI_{n-1}\left(\underline{100}\right)\setminus C_{n-1,k}$.

Conversely, given $e_{1}e_{2}\dots e_{n-1}\in\bI_{n-1}\left(\underline{100}\right)\setminus C_{n-1,k}$, setting $e_{n}=k$ yields
an inversion sequence $e=e_{1}e_{2}\dots e_{n}\in\bI_{n,k}\left(\underline{100}\right)$.
From this bijective correspondence between $\bI_{n,k}\left(\underline{100}\right)$ and $\bI_{n-1}\left(\underline{100}\right)\setminus C_{n-1,k}$, it follows that
\begin{equation}\label{eq:100_1}
\left|\bI_{n,k}\left(\underline{100}\right)\right|=\left|\bI_{n-1}\left(\underline{100}\right)\right|-\left|C_{n-1,k}\right|.
\end{equation}

Let us determine $\left|C_{n-1,k}\right|$. First suppose $0\leq k\leq n-4$. Note that $e_{1}e_{2}\dots e_{n-1}\in C_{n-1,k}$ if and only if
$e_{1}e_{2}\dots e_{n-2}\in \bI_{n-2,j}\left(\underline{100}\right)$ for some $k+1\leq j\leq n-3$ and $e_{n-1}=k$. Hence,
\begin{equation}\label{eq:100_2}
\left|C_{n-1,k}\right|=\sum_{j=k+1}^{n-3}\left|\bI_{n-2,j}\left(\underline{100}\right)\right|.
\end{equation}
In fact, this formula also holds for $k=n-3,n-2$, since in these cases the sum on the right-hand side is empty, and the left-hand side is zero by definition. The result now follows from Equations~\eqref{eq:100_1} and~\eqref{eq:100_2}.
\end{proof}

Using Equation~\eqref{eq:I0}, the next corollary is an immediate consequence of Propositions~\ref{prop_110} and~\ref{prop_100}.

\begin{cor}\label{cor_110_100} The patterns $\underline{110}$ and $\underline{100}$
are Wilf equivalent. Furthermore, if $n\geq 1$ and $0\leq k<n$, then $\left|\bI_{n,k}\left(\underline{110}\right)\right|=\left|\bI_{n,k}\left(\underline{100}\right)\right|$.
\end{cor}

Next we show that, in fact, patterns $\underline{110}$ and $\underline{100}$ are super-strongly Wilf equivalent.

\begin{defin} We say that a consecutive pattern $p$ is {\it non-overlapping\/} if it is impossible
for two occurrences of $p$ in an inversion sequence to overlap in more than one entry. Equivalently, $p=\underline{p_1\dots p_r}$ is non-overlapping if, for all $1<i<r$, the reductions of $p_1\dots p_i$ and $p_{r-i+1}\dots p_r$ do not coincide.
\end{defin}

\begin{exa} The pattern $\underline{110}$ is non-overlapping, but this is not the case for the pattern
$\underline{1100}$. Indeed, $e=0000221100\in\bI_{10}$ is an example of an inversion sequence
with two occurrences of $\underline{1100}$ which overlap in two entries, namely $e_{5}e_{6}e_{7}e_{8}$ and $e_{7}e_{8}e_{9}e_{10}$.
\end{exa}

\begin{lem}\label{lem:sub_110_100} Let $n$ be a positive integer and $S\subseteq [n]$, then
\[
\left|\left\{e\in\bI_{n}:\Em(\underline{110},e)\supseteq S\right\}\right|=\left|\left\{e\in\bI_{n}:\Em(\underline{100},e)\supseteq S\right\}\right|.
\]
\end{lem}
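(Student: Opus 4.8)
The plan is to construct, for each fixed $S$, an explicit bijection between $\{e\in\bI_n:\Em(\underline{110},e)\supseteq S\}$ and $\{e\in\bI_n:\Em(\underline{100},e)\supseteq S\}$ that acts only on the positions dictated by $S$. First I would record the rigid local structure that membership in $S$ forces: an occurrence of $\underline{110}$ at position $i$ means $e_i=e_{i+1}>e_{i+2}$, whereas an occurrence of $\underline{100}$ means $e_i>e_{i+1}=e_{i+2}$. If $S$ contains two integers differing by exactly $1$, then (just as in the non-overlapping discussion above) no inversion sequence can carry occurrences of either pattern at both positions, so both sides of the claimed identity are $0$; hence I may assume that consecutive elements of $S$ differ by at least $2$.

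Next I would decompose $S$ into its maximal \emph{chains}, a chain being a maximal run $i,i+2,i+4,\dots,i+2(t-1)$ of elements of $S$ with common difference $2$. Elements of $S$ in different chains index occurrences sharing no entry, while occurrences within a single chain are linked only through one shared entry. Concretely, on the span $e_i,e_{i+1},\dots,e_{i+2t}$ of a chain, demanding $\underline{110}$-occurrences at every chain element forces the staircase $e_i=e_{i+1}>e_{i+2}=e_{i+3}>\cdots>e_{i+2t-2}=e_{i+2t-1}>e_{i+2t}$, whereas demanding $\underline{100}$-occurrences forces $e_i>e_{i+1}=e_{i+2}>\cdots>e_{i+2t-1}=e_{i+2t}$. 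In both shapes the span carries exactly $t+1$ distinct values in strictly decreasing order, differing only in that the ``doubled'' and ``single'' entries sit in transposed positions.

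The heart of the argument is then a counting identity for a single chain. Since the inversion-sequence constraints $0\le e_m<m$ are independent across positions, the only active conditions on a chain are the upper bounds $e_m\le m-1$. Because the values along a chain are strictly decreasing, the $s$th largest value is at most $(i-1)-s$, which never exceeds the positional bound of whatever position it occupies; consequently, in \emph{both} staircase shapes the sole binding constraint is $e_i\le i-1$ on the largest value. Thus the admissible assignments for a chain, in either shape, are exactly the strictly decreasing sequences $x_0>x_1>\cdots>x_t\ge 0$ with $x_0\le i-1$. The map that retains these $t+1$ values and redistributes them according to the multiplicity pattern of the other shape is therefore a bijection on each chain; applying it independently on every chain and leaving all remaining entries unchanged produces the global map $\phi$. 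One checks directly that $\phi$ preserves the inversion-sequence property, that it sends a sequence with $\underline{110}$-occurrences at every position of $S$ to one with $\underline{100}$-occurrences at every position of $S$, and that it is invertible (the chain decomposition depends only on $S$, so the reverse redistribution recovers $e$).

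The main obstacle I anticipate is the bookkeeping of the chain decomposition together with the verification that the positional constraints $e_m<m$ genuinely collapse to the single inequality $e_i\le i-1$ in each shape; once that reduction is in hand the two shapes are manifestly equinumerous and the bijection is immediate. Some care is also needed to confirm that distinct chains occupy disjoint or merely adjacent spans, so that the local redistributions never interfere, and that leaving the entries outside $\bigcup_{i\in S}\{i,i+1,i+2\}$ untouched creates no conflict — a point made harmless by the independence of the inversion-sequence constraints.
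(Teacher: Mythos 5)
Your proof is correct, and the map you end up with is in fact exactly the paper's bijection $\Phi_S$: in both staircase shapes the entries $e_{i+2m}$ carry the value $x_m$, so your ``redistribution'' changes only the middle entry of each occurrence, replacing $(a,a,b)$ by $(a,b,b)$ — precisely the paper's rule $e'_j=e_{j+1}$ whenever $j-1\in S$. Where you differ is in the justification. The paper never decomposes $S$: it defines the substitution locally, checks validity in one line ($e'_j=e_{j+1}<e_j<j$, and $e_j=e'_{j-1}<j-1<j$ for the inverse), and invokes the fact that $\underline{110}$ and $\underline{100}$ are non-overlapping to see that the simultaneous local changes are well-defined and mutually inverse. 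You instead split $S$ into maximal difference-$2$ chains, work out the two forced staircase shapes on each chain span, and show that in both shapes the positional constraints collapse to the single inequality $x_0\le i-1$ on strictly decreasing fillings, so the two shapes are interchangeable chain by chain. Your chain analysis is heavier than needed here, but it is essentially the block-decomposition technique the paper itself deploys later for the genuinely overlapping patterns $\underline{0102}\stackrel{ss}{\sim}\underline{0112}$ (Proposition~\ref{prop:0102_0112}), so it would adapt to that setting, whereas the paper's short local argument is what generalizes cleanly to the non-overlapping/changeability machinery of Theorem~\ref{thm:general_extend_110_100_sergi}. One stylistic remark: your ``counting identity for a single chain'' framing is a slight detour, since you do not merely count — you exhibit the explicit value-preserving bijection on fillings; stating it that way from the start would shorten the argument. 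Your preliminary observation that both sides vanish when $S$ contains consecutive integers is correct but optional: in that case both sets are empty and the paper's map is vacuously a bijection.
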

\begin{proof} Consider the map
\[
\Phi_{S}:\left\{e\in\bI_{n}:\Em(\underline{110},e)\supseteq S\right\}\rightarrow\left\{e'\in\bI_{n}:\Em(\underline{100},e')\supseteq S\right\}
\]
that replaces the occurrences of $\underline{110}$ in $e$ in positions of $S$ with occurrences of $\underline{100}$. More specifically, for $e$ in the domain, $\Phi_{S}(e)=e'$ is defined as
\[
  e'_{j} = \begin{cases}
      e_{j+1}, & \text{if } j-1\in S,\\
      e_{j}, & \text{otherwise,}
      \end{cases}
\]
for $1\le j\le n$.
Note that $e'\in\bI_n$ because, if $j-1\in S$, then $e_{j-1}e_j e_{j+1}$ is an occurrence of $\underline{110}$, and so $e'_j=e_{j+1}<e_{j}<j$.

To show that $\Phi_{S}$ is a bijection, let us describe its inverse. For $e'\in\bI_{n}$ with $\Em(\underline{100},e')\supseteq S$, define $\Psi_{S}(e')=e$ by
\[
  e_{j} = \begin{cases}
      e'_{j-1}, & \text{if } j-1\in S,\\
      e'_{j}, & \text{otherwise,}
      \end{cases}
\]
for $1\le j\le n$.
Note that $e\in\bI_n$ because, if $j-1\in S$, then $e'_{j-1}e'_j e'_{j+1}$ is an occurrence of $\underline{100}$, and so  $e_j=e'_{j-1}<j-1<j$.
Since $\underline{110}$ and $\underline{100}$ are non-overlapping, $\Phi_{S}$ and $\Psi_{S}$ are well-defined. It is clear by construction that $\Phi_{S}$ and $\Psi_{S}$ are inverses of each other.
\end{proof}

The following lemma provides the final argument required to show that $\underline{110}\stackrel{ss}{\sim}\underline{100}$.

\begin{lem}\label{lem:inclusion_exlusion} Let $p$ and $p'$ be two consecutive patterns such that
\[
\left|\left\{e\in\bI_{n}:\Em(p,e)\supseteq S\right\}\right|=\left|\left\{e\in\bI_{n}:\Em(p',e)\supseteq S\right\}\right|
\]
for all positive integers $n$ and all $S\subseteq [n]$. Then $p\stackrel{ss}{\sim}p'$.
\end{lem}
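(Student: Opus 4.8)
The plan is to recognize this as a standard inclusion--exclusion (Möbius inversion) passage from ``contains a prescribed set of occurrences'' to ``has exactly a prescribed set of occurrences.'' For a consecutive pattern $q$ and a fixed $n$, I would introduce the two families of quantities
\[
g_{q}(S)=\left|\left\{e\in\bI_{n}:\Em(q,e)\supseteq S\right\}\right|,\qquad h_{q}(T)=\left|\left\{e\in\bI_{n}:\Em(q,e)=T\right\}\right|,
\]
both indexed by subsets of the finite set $[n]$, since $\Em(q,e)\subseteq[n]$ for every $e\in\bI_n$. The hypothesis of the lemma asserts precisely that $g_{p}(S)=g_{p'}(S)$ for all $n$ and all $S\subseteq[n]$, while the desired conclusion $p\stackrel{ss}{\sim}p'$ is exactly the statement that $h_{p}(T)=h_{p'}(T)$ for all $n$ and all $T\subseteq[n]$.

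The key step is the relation between the two families. First I would note that a sequence $e$ satisfies $\Em(q,e)\supseteq S$ if and only if its exact embedding set $\Em(q,e)$ equals some $T$ with $T\supseteq S$. Partitioning the set counted by $g_{q}(S)$ according to this exact set yields
\[
g_{q}(S)=\sum_{T\supseteq S}h_{q}(T),
\]
where the sum ranges over the finitely many $T$ with $S\subseteq T\subseteq[n]$. Möbius inversion on the Boolean lattice $2^{[n]}$ (the version of inclusion--exclusion for sums over supersets) then inverts this to
\[
h_{q}(T)=\sum_{S\supseteq T}(-1)^{\left|S\setminus T\right|}g_{q}(S).
\]

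Finally I would apply this inversion to both $p$ and $p'$: because $g_{p}(S)=g_{p'}(S)$ for every $S\subseteq[n]$ by hypothesis, the right-hand sides coincide term by term, so $h_{p}(T)=h_{p'}(T)$ for every $T\subseteq[n]$ and every $n$, which is the definition of super-strong Wilf equivalence. I expect no real obstacle here: all sums are over the finite lattice $2^{[n]}$, so the inversion is valid without convergence issues, and the only point requiring care is setting up the superset relation $g_{q}(S)=\sum_{T\supseteq S}h_{q}(T)$ correctly before inverting. Combined with Lemma~\ref{lem:sub_110_100}, this immediately yields $\underline{110}\stackrel{ss}{\sim}\underline{100}$.
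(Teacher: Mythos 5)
Your proposal is correct and follows essentially the same route as the paper: the paper likewise partitions by the exact occurrence set to get $f_{\geq}(T)=\sum_{S\supseteq T}f_{=}(S)$ and then inverts via the Principle of Inclusion--Exclusion (citing Stanley) to obtain $f_{=}(T)=\sum_{S\supseteq T}(-1)^{|S\setminus T|}f_{\geq}(S)$, which is exactly your M\"obius inversion on the Boolean lattice with $g$ and $h$ in place of $f_{\geq}$ and $f_{=}$. No gaps; the argument is complete as written.
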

\begin{proof} Given $S\subseteq [n]$, let
\[
f_{=}(S)=\left|\left\{e\in\bI_{n}:\Em(p,e)= S\right\}\right|\quad\textnormal{ and }\quad f_{\geq}(S)=\left|\left\{e\in\bI_{n}:\Em(p,e)\supseteq S\right\}\right|.
\]
For any $T\subseteq [n]$, it is clear that
\[
f_{\geq}(T)=\sum_{S\supseteq T}f_{=}(S),
\]
so the Principle of Inclusion-Exclusion (see~{\cite[Theorem~2.1.1]{Stanley}}) implies
that
\begin{equation}\label{eq:inc_exc1}
f_{=}(T)=\sum_{S\supseteq T}(-1)^{|S\setminus T|}f_{\geq}(S).
\end{equation}

Similarly, letting
\[
f'_{=}(S)=\left|\left\{e\in\bI_{n}:\Em(p',e)= S\right\}\right|\quad\textnormal{ and }\quad f'_{\geq}(S)=\left|\left\{e\in\bI_{n}:\Em(p',e)\supseteq S\right\}\right|,
\]
we have
\begin{equation}\label{eq:inc_exc2}
f'_{=}(T)=\sum_{S\supseteq T}(-1)^{|S\setminus T|}f'_{\geq}(S).
\end{equation}
By assumption, $f_{\geq}(S)=f'_{\geq}(S)$ for all $S\subseteq[n]$, so Equations~\eqref{eq:inc_exc1} and~\eqref{eq:inc_exc2}
imply that $f_{=}(T)=f'_{=}(T)$ for all $T\subseteq[n]$. This proves that $p\stackrel{ss}{\sim}p'$.
\end{proof}

The next proposition is an immediate consequence of Lemmas~\ref{lem:sub_110_100} and~\ref{lem:inclusion_exlusion}. A generalization to patterns of arbitrary length will be given in Theorem~\ref{thm:general_extend_110_100_sergi}.

\begin{prop}\label{prop:110ss100} The patterns $\underline{110}$ and $\underline{100}$ are super-strongly Wilf equivalent.
\end{prop}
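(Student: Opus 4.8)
The plan is to chain together the two preceding lemmas, which are tailored precisely for this purpose, so the argument is one of assembly rather than fresh computation. First I would observe that Lemma~\ref{lem:sub_110_100} establishes, for every positive integer $n$ and every $S\subseteq[n]$, the equality
\[
\left|\left\{e\in\bI_{n}:\Em(\underline{110},e)\supseteq S\right\}\right|=\left|\left\{e\in\bI_{n}:\Em(\underline{100},e)\supseteq S\right\}\right|.
\]
This is exactly the hypothesis of Lemma~\ref{lem:inclusion_exlusion}, applied with $p=\underline{110}$ and $p'=\underline{100}$. Invoking that lemma immediately yields $\underline{110}\stackrel{ss}{\sim}\underline{100}$, which is the assertion of the proposition.

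It is worth highlighting the division of labor between the two lemmas. The natural bijection $\Phi_{S}$ of Lemma~\ref{lem:sub_110_100}, which swaps $\underline{110}$-occurrences for $\underline{100}$-occurrences at the positions in $S$, only equates the counts of inversion sequences whose occurrence set \emph{contains} $S$; it does not by itself control the sequences whose occurrence set equals a prescribed $T$, since applying $\Phi_{S}$ may create or destroy occurrences in positions outside $S$. Lemma~\ref{lem:inclusion_exlusion} repairs this by inclusion-exclusion (M\"obius inversion over the subset lattice of $[n]$), converting the ``contains $S$'' equalities into the ``equals $T$'' equalities required by the definition of super-strong Wilf equivalence.

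There is no genuine obstacle at this final step: the conclusion of Lemma~\ref{lem:sub_110_100} is verbatim the premise of Lemma~\ref{lem:inclusion_exlusion}, so the proposition follows by direct substitution. The real content lies in the two lemmas themselves, and in particular in the fact that $\underline{110}$ and $\underline{100}$ are non-overlapping, which is what guarantees that $\Phi_{S}$ and its inverse $\Psi_{S}$ are simultaneously well-defined across all positions in $S$.
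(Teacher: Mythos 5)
Your proposal is correct and is exactly the paper's argument: the paper states that Proposition~\ref{prop:110ss100} is an immediate consequence of Lemmas~\ref{lem:sub_110_100} and~\ref{lem:inclusion_exlusion}, precisely the chaining you describe. Your additional remarks about why the ``contains $S$'' counts must be converted to ``equals $T$'' counts via inclusion-exclusion, and the role of the non-overlapping property in making $\Phi_S$ well-defined, accurately reflect the content of those two lemmas.
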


\subsection{Other patterns of length 3}

Applying the same technique used in the proof of Propositions~\ref{prop_110} and~\ref{prop_100}, we have
obtained recurrences for $\left|\bI_{n,k}\left(p\right)\right|$ for all consecutive patterns $p$ of length 3. In each case, we
consider an inversion sequence $e=e_{1}e_{2}\dots e_{n}\in\bI_{n,k}(p)$ and analyze the possibilities
for $e_{1}e_{2}\dots e_{n-1}\in\bI_{n-1}(p)$. We then determine that $e_{1}e_{2}\dots e_{n-1}$ may be any sequence in $\bI_{n-1}(p)$, with the exception of certain $e_{1}e_{2}\dots e_{n-2}\in\bI_{n-2,j}(p)$, which are not allowed for some values of $j$. For this reason we obtain recurrences for $\left|\bI_{n,k}\left(p\right)\right|$ in terms of $\left|\bI_{n-1}\left(p\right)\right|$ and $\left|\bI_{n-2,j}\left(p\right)\right|$, with $j$ in a certain range.

We now list recurrences obtained for consecutive patterns of length 3 with no repeated letters.
\begin{prop}\label{prop:rest_recur3_dist} Let $n\geq 1$ and $0\leq k<n$. Then the following recurrences hold:
\begin{enumerate}[label=(\alph*),leftmargin=1.5cm, itemsep=3pt]
\item For $k< n-1$,
\[
\left|\bI_{n,k}\left(\underline{021}\right)\right|=\left|\bI_{n-1}\left(\underline{021}\right)\right|-
(n-2-k)\sum_{j= 0}^{k-1}\left|\bI_{n-2,j}\left(\underline{021}\right)\right|,
\]
and $\left|\bI_{n,n-1}\left(\underline{021}\right)\right|=\left|\bI_{n-1}(\underline{021})\right|$.

\item $\left|\bI_{n,k}\left(\underline{102}\right)\right|=\left|\bI_{n-1}\left(\underline{102}\right)\right|-
\sum_{j\geq1}j\left|\bI_{n-2,j}\left(\underline{102}\right)\right|$.
\item $\left|\bI_{n,k}\left(\underline{120}\right)\right|=\left|\bI_{n-1}\left(\underline{120}\right)\right|-\sum_{j>k}(n-2-j)\left|\bI_{n-2,j}\left(\underline{120}\right)\right|$.
\item $\left|\bI_{n,k}\left(\underline{201}\right)\right|=\left|\bI_{n-1}\left(\underline{201}\right)\right|-k
\sum_{j>k}\left|\bI_{n-2,j}\left(\underline{201}\right)\right|$.
\end{enumerate}
\end{prop}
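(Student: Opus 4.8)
The plan is to reuse, for all four patterns at once, the deletion--insertion argument of Propositions~\ref{prop_110} and~\ref{prop_100}. Fix one of the patterns $p$ and take $e=e_1e_2\dots e_n\in\bI_{n,k}(p)$. Deleting the last entry yields a prefix $e_1\dots e_{n-1}\in\bI_{n-1}(p)$, and conversely, appending $e_n=k$ to an arbitrary $e'=e_1\dots e_{n-1}\in\bI_{n-1}(p)$ produces a sequence of $\bI_{n,k}$ whose only possibly new occurrence of $p$ is the final triple $e_{n-2}e_{n-1}k$. Writing $B_{n,k}(p)$ for the number of $e'\in\bI_{n-1}(p)$ such that $e'_{n-2}e'_{n-1}k$ reduces to $p$, this gives
\[
\bigl|\bI_{n,k}(p)\bigr|=\bigl|\bI_{n-1}(p)\bigr|-B_{n,k}(p),
\]
so the whole problem reduces to evaluating $B_{n,k}(p)$ for each $p$.

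Next I would read off, from each pattern, the order relation that $e'_{n-2}e'_{n-1}k$ must satisfy in order to be an occurrence: $e'_{n-2}<k<e'_{n-1}$ for $\underline{021}$, $e'_{n-1}<e'_{n-2}<k$ for $\underline{102}$, $k<e'_{n-2}<e'_{n-1}$ for $\underline{120}$, and $e'_{n-1}<k<e'_{n-2}$ for $\underline{201}$. To count the sequences realizing a given relation, I would fix $e'_{n-2}=j$, count the admissible values of $e'_{n-1}$, and let the first $n-2$ entries range over $\bI_{n-2,j}(p)$. The number of admissible $e'_{n-1}$ is $n-2-k$ (for $j<k$) in the $\underline{021}$ case, $j$ (for $j<k$) in the $\underline{102}$ case, $n-2-j$ (for $j>k$) in the $\underline{120}$ case, and $k$ (for $j>k$) in the $\underline{201}$ case. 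Summing $\bigl|\bI_{n-2,j}(p)\bigr|$ against these weights over the indicated ranges of $j$ then yields the corresponding recurrence for $\bigl|\bI_{n,k}(p)\bigr|$.

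The step I expect to require the most care is justifying that the first $n-2$ entries may indeed range freely over all of $\bI_{n-2,j}(p)$; equivalently, that specifying $e'_{n-1}$ never creates an occurrence of $p$ at positions $n-3,n-2,n-1$, which would eject $e'$ from $\bI_{n-1}(p)$. Here I would exploit the feature common to these four patterns that the middle letter $p_2$ is a strict extreme, namely $p_1<p_2>p_3$ (for $\underline{021},\underline{120}$) or $p_1>p_2<p_3$ (for $\underline{102},\underline{201}$). Consequently the comparison between $e'_{n-2}$ and $e'_{n-1}$ forced by an occurrence in the \emph{final} triple (governed by $p_1$ versus $p_2$) is the exact opposite of the comparison forced by an occurrence in the \emph{preceding} triple (governed by $p_2$ versus $p_3$). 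Since $e'$ already realizes the first comparison, it cannot realize the second, so no occurrence arises at positions $n-3,n-2,n-1$. This makes $(e_1\dots e_{n-2},\,e'_{n-1})\mapsto e'$ a bijection onto the sequences counted by $B_{n,k}(p)$, so the weighted sums are exact.

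Finally I would dispose of the boundary situations. For $\underline{021}$ the relation $e'_{n-2}<k<e'_{n-1}$ is vacuous when $k=n-1$, since it would force $e'_{n-1}>n-1>n-2$; hence $B_{n,n-1}(\underline{021})=0$ and $\bigl|\bI_{n,n-1}(\underline{021})\bigr|=\bigl|\bI_{n-1}(\underline{021})\bigr|$, which is why this value of $k$ is listed separately. For $n=1,2$ every subtracted sum is empty, because a pattern of length $3$ cannot occur before position $3$, and each recurrence collapses to $\bigl|\bI_{n,k}(p)\bigr|=\bigl|\bI_{n-1}(p)\bigr|$, in agreement with the convention~\eqref{eq:I0} and with $\bigl|\bI_{n,j}(p)\bigr|=0$ for $n<0$. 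With these checks in place, the four recurrences follow.
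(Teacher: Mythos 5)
Your proposal is correct and is essentially the paper's own argument: the paper proves this proposition only by reference to the technique of Propositions~\ref{prop_110} and~\ref{prop_100} (delete the last entry, note that the only occurrence at stake is the final triple, and count the forbidden prefixes $e'\in\bI_{n-1}(p)$ by fixing $e'_{n-2}=j$ and counting admissible values of $e'_{n-1}$). Your ``strict extreme'' observation --- that $p_1<p_2>p_3$ or $p_1>p_2<p_3$ for these four patterns, so the comparison between $e'_{n-2}$ and $e'_{n-1}$ forced by an occurrence in the final triple is opposite to the one an occurrence at positions $n-3,n-2,n-1$ would require --- is exactly the unwritten verification behind the paper's claim that the first $n-2$ entries may range freely over $\bI_{n-2,j}(p)$, and your handling of the boundary case $k=n-1$ in (a) matches the statement.

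One substantive point deserves emphasis: in case (b), your derivation yields $\sum_{j=1}^{k-1} j\left|\bI_{n-2,j}(\underline{102})\right|$, with the restriction $j<k$ coming from the occurrence condition $e'_{n-1}<e'_{n-2}<k$, whereas the statement as printed has $\sum_{j\geq 1}$, independent of $k$. Your version is the correct one and the printed one is a typo: appending $e_n=0$ can never create an occurrence of $\underline{102}$ (which would require $e_n>e_{n-2}$), so $\left|\bI_{n,0}(\underline{102})\right|=\left|\bI_{n-1}(\underline{102})\right|$, contradicting the $k$-independent formula. Concretely, at $n=4$ the printed recurrence gives $\left|\bI_{4,k}(\underline{102})\right|=5$ for all $k$ and hence $\left|\bI_{4}(\underline{102})\right|=20$, contradicting the value $22$ in Table~\ref{tab_first8_1}, while your formula gives $6,6,5,5$, summing to $22$ (and it reproduces $96$ and $492$ at $n=5,6$). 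So your proof establishes the corrected statement; just be aware that it does not, and should not, match the literal text of (b) (the same typo appears in Table~\ref{tab1}).
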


Next, we describe the recurrence obtained for the pattern $\underline{012}$, which contains terms of the form $\left|\bI_{n-3,j}\left(\underline{012}\right)\right|$. In this case, when we consider an inversion sequence $e=e_{1}e_{2}\dots e_{n}\in\bI_{n,k}(\underline{012})$ and analyze the possibilities
 for $e_{1}e_{2}\dots e_{n-1}\in\bI_{n-1}(\underline{012})$, we determine that $e_{1}e_{2}\dots e_{n-1}$ may be any sequence in $\bI_{n-1}(\underline{012})$, with the exception of certain $e_{1}e_{2}\dots e_{n-3}\in\bI_{n-3,j}(\underline{012})$, which are not allowed for some values of $j$. Indeed, $e=e_{1}e_{2}\dots e_{n}\in\bI_{n,k}(\underline{012})$ if and only if:
 \[
e_{1}e_{2}\dots e_{n-1}\in\bI_{n-1}(\underline{012}), \quad e_{n}=k \quad\text{and}\quad e_{n-2}, e_{n-1}\ \text{do not satisfy}\ e_{n-2}<e_{n-1}<k.
\]
Now $e_{1}e_{2}\dots e_{n-1}$ belongs to $\bI_{n-1}(\underline{012})$ and satisfies $e_{n-2}<e_{n-1}<k$ if and only if $e_{1}e_{2}\dots e_{n-3}\in\bI_{n-3, i}(\underline{012})$ for some $i\geq e_{n-2}$, and $e_{n-2}<e_{n-1}<k$. Hence, making the substitutions $j=e_{n-2}$ and $l=e_{n-1}$, we deduce the following result.

\begin{prop}\label{prop:rest_recur3_012} Let $n\geq 4$ and $0\leq k<n$. Then the sequence $\left|\bI_{n,k}\left(\underline{012}\right)\right|$ satisfies the recurrence
\[
\left|\bI_{n,k}\left(\underline{012}\right)\right|=\left|\bI_{n-1}\left(\underline{012}\right)\right|-\sum_{l= 1}^{k-1}\sum_{j= 0}^{l-1}\sum_{i\geq j}\left|\bI_{n-3,i}\left(\underline{012}\right)\right|,
\]
with initial conditions $\left|\bI_{1,0}\left(\underline{012}\right)\right|=\left|\bI_{2,0}\left(\underline{012}\right)\right|=\left|\bI_{2,1}\left(\underline{012}\right)\right|=\left|\bI_{3,2}\left(\underline{012}\right)\right|=1$
and $\left|\bI_{3,0}\left(\underline{012}\right)\right|=\left|\bI_{3,1}\left(\underline{012}\right)\right|=2$.
\end{prop}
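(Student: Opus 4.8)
The plan is to peel off the final entry of an inversion sequence, exactly as in the proofs of Propositions~\ref{prop_110} and~\ref{prop_100}, but now tracking three tail entries instead of two. First I would note that deleting the last letter of any $e\in\bI_{n,k}(\underline{012})$ gives a sequence in $\bI_{n-1}(\underline{012})$, since removing a letter cannot destroy an occurrence. Conversely, appending the value $k$ to a sequence $e_{1}\dots e_{n-1}\in\bI_{n-1}(\underline{012})$ lands in $\bI_{n,k}(\underline{012})$ exactly when the only new window, namely $e_{n-2}e_{n-1}e_{n}=e_{n-2}e_{n-1}k$, fails to reduce to $\underline{012}$, i.e.\ when it is \emph{not} the case that $e_{n-2}<e_{n-1}<k$. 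This yields
\[
\left|\bI_{n,k}(\underline{012})\right|=\left|\bI_{n-1}(\underline{012})\right|-N,
\]
where $N$ is the number of $e_{1}\dots e_{n-1}\in\bI_{n-1}(\underline{012})$ with $e_{n-2}<e_{n-1}<k$.

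The heart of the argument is computing $N$. I would fix the two tail values $j=e_{n-2}$ and $l=e_{n-1}$ with $0\le j<l<k$ and count the admissible bodies $e_{1}\dots e_{n-3}$. A priori two windows couple the body to the tail: the window $e_{n-4}e_{n-3}e_{n-2}$ at position $n-4$ (present only when $n\ge5$) and the window $e_{n-3}e_{n-2}e_{n-1}$ at position $n-3$. The step I expect to do the real work is the observation that, since $j<l$, forbidding the later window forces $e_{n-3}\ge j$, and this single inequality renders the earlier window vacuous: if $e_{n-3}\ge j=e_{n-2}$ then $e_{n-3}<e_{n-2}$ is impossible, so $e_{n-4}e_{n-3}e_{n-2}$ can never reduce to $\underline{012}$. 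Thus all constraints collapse to the single clean requirement $e_{1}\dots e_{n-3}\in\bI_{n-3,i}(\underline{012})$ for some $i\ge j$. Along the way I would check that $j$ and $l$ are legal entries, which holds because $j\le l-1\le n-3$ and $l\le k-1\le n-2$.

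With this reduction, for each fixed pair $(j,l)$ the number of admissible bodies is $\sum_{i\ge j}\left|\bI_{n-3,i}(\underline{012})\right|$, and summing over $1\le l\le k-1$ and $0\le j\le l-1$ gives
\[
N=\sum_{l=1}^{k-1}\sum_{j=0}^{l-1}\sum_{i\ge j}\left|\bI_{n-3,i}(\underline{012})\right|,
\]
which is precisely the subtracted term in the statement. To finish, I would verify the listed initial values for $n\le3$ by direct enumeration, and note that the conventions $\left|\bI_{n,j}(p)\right|=0$ for $n<0$ and $\bI_{n-3,i}(\underline{012})=\emptyset$ for $i\ge n-3$ make the summation ranges harmless for all $n\ge4$. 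The one genuinely subtle point is the vacuity of the position-$(n-4)$ window; the remainder is careful bookkeeping of index ranges.
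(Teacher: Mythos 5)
Your proposal is correct and follows essentially the same route as the paper: peel off the last entry, characterize membership in $\bI_{n,k}(\underline{012})$ by forbidding $e_{n-2}<e_{n-1}<k$, and count the excluded sequences by fixing $j=e_{n-2}$, $l=e_{n-1}$ and reducing to $e_1\dots e_{n-3}\in\bI_{n-3,i}(\underline{012})$ with $i\geq j$. Your only addition is to spell out why the window $e_{n-4}e_{n-3}e_{n-2}$ imposes no further constraint once $e_{n-3}\ge j$ is forced — a point the paper leaves implicit inside its ``if and only if'' claim — and this is a correct and welcome clarification rather than a deviation.
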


With some algebraic manipulations, the triple sum in Proposition~\ref{prop:rest_recur3_012} may be simplified to obtain the following recurrence, which can be computed more efficiently.

\begin{cor}\label{cor:collapse_sum1} Let $n\geq 4$ and $0\leq k<n$. Then
\[
\left|\bI_{n,k}\left(\underline{012}\right)\right|=\left|\bI_{n-1}\left(\underline{012}\right)\right|-\sum_{i=0}^{k-3}(i+1)\left(k-1-\frac{i}{2}\right)\left|\bI_{n-3,i}\left(\underline{012}\right)\right|
-\frac{k(k-1)}{2}\sum_{i=k-2}^{n-4}\left|\bI_{n-3,i}\left(\underline{012}\right)\right|,
\]
with the same initial conditions as in Proposition~\ref{prop:rest_recur3_012}.
\end{cor}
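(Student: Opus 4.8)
The plan is to treat this purely as an algebraic simplification of the triple sum appearing in Proposition~\ref{prop:rest_recur3_012}, since the recurrence and its initial conditions are inherited verbatim. Writing $a_i := |\bI_{n-3,i}(\underline{012})|$ for brevity and recalling the convention that $a_i = 0$ whenever $i \geq n-3$, the inner sum $\sum_{i\geq j}$ effectively runs over $j \leq i \leq n-4$, so the quantity to simplify is $\sum_{l=1}^{k-1}\sum_{j=0}^{l-1}\sum_{i=j}^{n-4} a_i$. First I would interchange the order of summation and collect, for each fixed index $i$, the total coefficient multiplying $a_i$. This coefficient is the number of pairs $(l,j)$ satisfying $1\leq l\leq k-1$ and $0\leq j\leq \min(l-1,i)$; summing the count $\min(l-1,i)+1 = \min(l,i+1)$ of admissible $j$ over $l$ gives the coefficient $c_i = \sum_{l=1}^{k-1}\min(l,i+1)$.

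The key step is then to evaluate $c_i$ by splitting according to whether the cap $i+1$ is active over the range of $l$. When $i \geq k-2$ we have $\min(l,i+1) = l$ for every $l \in \{1,\dots,k-1\}$, so $c_i = \sum_{l=1}^{k-1} l = \frac{k(k-1)}{2}$, which accounts exactly for the second sum in the statement, ranging over $k-2 \leq i \leq n-4$. When $i \leq k-3$, I would break the range at $l = i+1$: the terms with $l \leq i+1$ contribute $\sum_{l=1}^{i+1} l = \binom{i+2}{2}$, while the $k-2-i$ terms with $l \geq i+2$ each contribute $i+1$. Adding these gives $c_i = \frac{(i+1)(i+2)}{2} + (i+1)(k-2-i)$, and factoring out $i+1$ and simplifying the bracket via $\frac{i+2}{2} + (k-2-i) = k-1-\frac{i}{2}$ yields $c_i = (i+1)\left(k-1-\frac{i}{2}\right)$, matching the first sum.

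Substituting these two coefficient formulas back into the sum and re-indexing completes the derivation. The argument is entirely a reindexing-and-counting manipulation, so there is no genuine combinatorial obstacle; the one point requiring care is the bookkeeping of the summation bounds. In particular I would verify that the threshold $i = k-2$, at which $\min(l,i+1)$ switches from being capped to equalling $l$, is placed correctly so that the two displayed sums partition the indices $0 \leq i \leq n-4$ without overlap or omission, and I would check the degenerate small-$k$ cases (for instance $k \leq 1$, where both sums vanish, and $k=2$, where only the second sum survives) to confirm that the empty ranges and the vanishing prefactor $\frac{k(k-1)}{2}$ behave as intended.
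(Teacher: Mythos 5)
Your proposal is correct and follows essentially the same route as the paper: both interchange the order of summation in the triple sum, collect the coefficient of each $\left|\bI_{n-3,i}\left(\underline{012}\right)\right|$, and split at the threshold $i=k-2$ to obtain $(i+1)\left(k-1-\frac{i}{2}\right)$ for $i\leq k-3$ and $\frac{k(k-1)}{2}$ for $i\geq k-2$. The only cosmetic difference is that you evaluate the coefficient as $\sum_{l=1}^{k-1}\min(l,i+1)$ by splitting over $l$, whereas the paper first swaps $l$ and $j$ to get the weight $(k-j-1)$ and then sums over $j$; your careful check of the boundary and degenerate cases is sound.
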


\begin{proof}
Changing the order of the two outermost summations in the triple sum in Proposition~\ref{prop:rest_recur3_012}, we obtain
\[
\sum_{l= 1}^{k-1}\sum_{j= 0}^{l-1}\sum_{i\geq j}\left|\bI_{n-3,i}\left(\underline{012}\right)\right| = \sum_{j= 0}^{k-2}\sum_{l= j}^{k-2}\sum_{i\geq j}\left|\bI_{n-3,i}\left(\underline{012}\right)\right| = \sum_{j=0}^{k-2}(k-j-1)\sum_{i\geq j}\left|\bI_{n-3,i}\left(\underline{012}\right)\right|
\]
Similarly, changing the order of the rightmost double summation above yields
\begin{align*}
\sum_{l= 1}^{k-1}\sum_{j= 0}^{l-1}\sum_{i\geq j}\left|\bI_{n-3,i}\left(\underline{012}\right)\right| &= \sum_{i=0}^{k-3}\sum_{j=0}^{i}(k-j-1)\left|\bI_{n-3,i}\left(\underline{012}\right)\right| + \sum_{i=k-2}^{n-4}\sum_{j=0}^{k-2}(k-j-1)\left|\bI_{n-3,i}\left(\underline{012}\right)\right|\\
&= \sum_{i=0}^{k-3}(i+1)\left(k-1-\frac{i}{2}\right)\left|\bI_{n-3,i}\left(\underline{012}\right)\right|
-\frac{k(k-1)}{2}\sum_{i=k-2}^{n-4}\left|\bI_{n-3,i}\left(\underline{012}\right)\right|.
\end{align*}
The result then follows from Proposition~\ref{prop:rest_recur3_012}.
\end{proof}

The following is the recurrence obtained for the pattern $\underline{210}$ using a similar argument.

\begin{prop}\label{prop:recur_210} Let $n\geq 1$ and $0\leq k<n$. Then  the sequence $\left|\bI_{n,k}\left(\underline{210}\right)\right|$ satisfies the recurrence
\[
\left|\bI_{n,k}\left(\underline{210}\right)\right|=\left|\bI_{n-1}\left(\underline{210}\right)\right|-
\sum_{l=k+1}^{n-4}\sum_{j=l+1}^{n-3}\sum_{i\leq j}\left|\bI_{n-3,i}(\underline{210})\right|.
\]
\end{prop}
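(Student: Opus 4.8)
The plan is to mimic the decomposition used for $\underline{012}$ in Proposition~\ref{prop:rest_recur3_012}, but adapted to the pattern $\underline{210}$. As explained in the paragraph preceding Proposition~\ref{prop:rest_recur3_dist}, the general strategy is to take $e=e_1e_2\dots e_n\in\bI_{n,k}(\underline{210})$ and examine the prefix $e_1e_2\dots e_{n-1}$, which automatically lies in $\bI_{n-1}(\underline{210})$. Since $\underline{210}$ has length $3$, appending $e_n=k$ can only create a new occurrence of the pattern in position $n-2$, namely through the triple $e_{n-2}e_{n-1}e_n$. Thus $e_1e_2\dots e_{n-1}\in\bI_{n-1}(\underline{210})$ together with $e_n=k$ yields a sequence in $\bI_{n,k}(\underline{210})$ if and only if $e_{n-2}e_{n-1}e_n$ is \emph{not} an occurrence of $\underline{210}$, i.e. the condition $e_{n-2}>e_{n-1}>k$ fails.

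First I would set up the bijective correspondence: $e\in\bI_{n,k}(\underline{210})$ corresponds to those $e_1e_2\dots e_{n-1}\in\bI_{n-1}(\underline{210})$ for which we do \emph{not} have $e_{n-2}>e_{n-1}>k$, paired with the choice $e_n=k$. Hence
\[
\left|\bI_{n,k}(\underline{210})\right|=\left|\bI_{n-1}(\underline{210})\right|-\left|B\right|,
\]
where $B$ is the set of forbidden prefixes, namely those $e_1e_2\dots e_{n-1}\in\bI_{n-1}(\underline{210})$ satisfying $e_{n-2}>e_{n-1}>k$. The task then reduces to counting $B$. Next I would parametrize $B$ by the values $l=e_{n-1}$ and $j=e_{n-2}$, so that the constraint becomes $k<l<j$, with $l$ ranging over $k+1$ up to the maximal feasible value and $j$ over $l+1$ up to its maximal value. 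For such a prefix, the subprefix $e_1e_2\dots e_{n-3}$ must lie in $\bI_{n-3,i}(\underline{210})$ for some $i$, and I must determine exactly which $i$ are admissible: since $e_{n-2}=j$ must not complete an occurrence of $\underline{210}$ in position $n-4$ through $e_{n-4}e_{n-3}e_{n-2}$, and more importantly since $e_{n-2}=j$ is fixed, the reduction condition forces $e_{n-3}=i$ to satisfy $i\le j$ (so that $e_{n-3}e_{n-2}e_{n-1}$ does not reduce to a descending pattern incompatible with the already-present $\underline{210}$-avoidance; concretely the surviving constraint is $i\le j$). This yields the innermost sum $\sum_{i\le j}\left|\bI_{n-3,i}(\underline{210})\right|$, and summing over the ranges for $l$ and $j$ produces the stated triple sum.

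The main obstacle, as in the $\underline{012}$ case, is pinning down the ranges of summation and the admissibility condition on $i$ \emph{exactly}, including the boundary values that come from the inversion-sequence constraint $e_m<m$. One must verify that $l$ and $j$ indeed run only up to $n-4$ and $n-3$ respectively (rather than further), which reflects the fact that $e_{n-2}<n-2$ and $e_{n-1}<n-1$ combined with the strict inequalities $k<l<j$, and that the feasible $i$ are precisely those with $i\le j$ subject to $e_{n-3}<n-3$. Care is needed to confirm that no occurrence of $\underline{210}$ is inadvertently created or destroyed among positions $n-4,n-3$ when we impose $e_{n-2}=j$ and $e_{n-1}=l$, so that the subprefix count is genuinely $\left|\bI_{n-3,i}(\underline{210})\right|$ with no further corrections. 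Once these ranges and the condition $i\le j$ are justified, the recurrence follows by assembling the three nested sums; I would close by checking the small cases $n\le 3$ against Table~\ref{tab_first8_1}, where the triple sum is empty and the recurrence reduces to $\left|\bI_{n,k}(\underline{210})\right|=\left|\bI_{n-1}(\underline{210})\right|$.
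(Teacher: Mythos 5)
Your proposal is correct and takes essentially the same route as the paper: Proposition~\ref{prop:recur_210} is stated there with only the remark that it follows ``using a similar argument'' to the $\underline{012}$ case, and your argument is precisely that template adapted to $\underline{210}$ --- remove $e_n=k$, subtract the prefixes in $\bI_{n-1}(\underline{210})$ with $e_{n-2}>e_{n-1}>k$, and parametrize them by $l=e_{n-1}$, $j=e_{n-2}$, $i=e_{n-3}$ with $k<l<j\le n-3$ (forcing $l\le n-4$) and $i\le j$. The one step you flag but do not carry out --- that $i\le j$ also prevents an occurrence at position $n-4$, so the subprefix count is exactly $\left|\bI_{n-3,i}(\underline{210})\right|$ --- is immediate, since such an occurrence would require $e_{n-4}>i>j$.
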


As in Corollary~\ref{cor:collapse_sum1}, we may simplify the triple sum in Proposition~\ref{prop:recur_210} to obtain the following recurrence.

\begin{cor}\label{cor:collapse_sum2} Let $n\geq 1$ and $0\leq k<n$. If $1\leq n\leq 4$ or $0\leq n-5<k$, then
$\left|\bI_{n,k}\left(\underline{210}\right)\right|=\left|\bI_{n-1}\left(\underline{210}\right)\right|$. Otherwise (i.e., $0\leq k\leq n-5$),
\begin{multline*}
\left|\bI_{n,k}\left(\underline{210}\right)\right|=\left|\bI_{n-1}\left(\underline{210}\right)\right|
-\frac{1}{2}\bigg[(n-k-4)(n-k-3)\sum_{i=0}^{k+2}\left|\bI_{n-3,i}(\underline{210})\right|\\
+\sum_{i=k+3}^{n-4}(n-i-2)(n+i-2k-5)\left|\bI_{n-3,i}(\underline{210})\right|\bigg].
\end{multline*}
\end{cor}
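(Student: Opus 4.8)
The plan is to simplify the triple sum in Proposition~\ref{prop:recur_210} purely by reindexing and collapsing two of the three summations, exactly in the spirit of the proof of Corollary~\ref{cor:collapse_sum1}. For brevity write $a_i=\left|\bI_{n-3,i}\left(\underline{210}\right)\right|$, and recall the conventions that $a_i=0$ for $i\ge n-3$ and for $i<0$, so that the innermost sum is really a partial sum, $\sum_{i\le j}a_i=\sum_{i=0}^{j}a_i$.

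First I would dispose of the degenerate cases. The outer summation $\sum_{l=k+1}^{n-4}$ is empty precisely when $k+1>n-4$. If $n\le4$ this holds for every $k\ge0$, and if $n\ge5$ it holds exactly when $k>n-5$; together these two situations are exactly the hypothesis ``$1\le n\le4$ or $0\le n-5<k$'' of the corollary, and in that regime the whole triple sum vanishes, leaving $\left|\bI_{n,k}(\underline{210})\right|=\left|\bI_{n-1}(\underline{210})\right|$. So for the remainder I may assume $0\le k\le n-5$.

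Next comes the main computation. Introducing the partial sums $S_j=\sum_{i=0}^{j}a_i$, the triple sum becomes $\sum_{l=k+1}^{n-4}\sum_{j=l+1}^{n-3}S_j$. I would first swap the $l$ and $j$ summations: for fixed $j$ (now ranging over $k+2\le j\le n-3$) the index $l$ runs over $k+1\le l\le j-1$, the bound $n-4$ being inactive because $j\le n-3$, and this contributes a multiplicity $j-k-1$. The double sum thus collapses to $\sum_{j=k+2}^{n-3}(j-k-1)S_j$. Then I would expand $S_j$ and interchange the $i$ and $j$ summations a second time. The key bookkeeping point is that the lower limit of the resulting $j$-sum depends on $i$: for $i\le k+2$ the index $j$ still starts at $k+2$, whereas for $i\ge k+3$ it starts at $i$. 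This splits the expression into the two pieces appearing in the statement, with coefficients $\sum_{j=k+2}^{n-3}(j-k-1)=\tfrac12(n-k-4)(n-k-3)$ and $\sum_{j=i}^{n-3}(j-k-1)=\tfrac12(n-i-2)(n+i-2k-5)$, each obtained from the standard arithmetic-series formula (the latter has $n-i-2$ terms, with first term $i-k-1$ and last term $n-k-4$). Finally, using $a_{n-3}=0$ to truncate the second sum at $i=n-4$ yields precisely the stated formula.

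I do not anticipate any genuine obstacle here: the argument is a routine double interchange of summation order followed by evaluation of arithmetic progressions, structurally identical to Corollary~\ref{cor:collapse_sum1}. The only step demanding care is the index tracking in the second swap—correctly identifying the two regimes $i\le k+2$ and $i\ge k+3$ and the corresponding lower limits of the $j$-sum—together with a careful check of the boundary conventions ($a_i=0$ outside $0\le i\le n-4$) so that all the collapsed ranges come out exactly as written.
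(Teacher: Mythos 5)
Your proposal is correct and takes essentially the same route as the paper, which proves this corollary exactly as you do: the same two interchanges of summation order applied to the triple sum of Proposition~\ref{prop:recur_210} (explicitly mirroring the proof of Corollary~\ref{cor:collapse_sum1}), with the same split into the regimes $i\le k+2$ and $i\ge k+3$ and the same arithmetic-series evaluations $\sum_{j=k+2}^{n-3}(j-k-1)=\tfrac{1}{2}(n-k-4)(n-k-3)$ and $\sum_{j=i}^{n-3}(j-k-1)=\tfrac{1}{2}(n-i-2)(n+i-2k-5)$. Your treatment of the degenerate cases (empty outer sum precisely when $n\le 4$ or $k>n-5$) and of the boundary convention $\left|\bI_{n-3,i}(\underline{210})\right|=0$ for $i\ge n-3$, used to truncate the second sum at $i=n-4$, is likewise exactly what the paper's argument requires.
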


Similarly, it is possible to write recurrences for all the remaining consecutive patterns of length 3 with repeated letters. We list them next.

\begin{prop}\label{prop:rest_recur3_same} Let $n\geq 1$ and $0\leq k<n$. Then the following recurrences hold:
\begin{enumerate}[label=(\alph*),leftmargin=1.5cm,, itemsep=3pt]
\item $\left|\bI_{n,k}\left(\underline{001}\right)\right|=\left|\bI_{n-1}\left(\underline{001}\right)\right|-
\sum_{j<k}\left|\bI_{n-2,j}\left(\underline{001}\right)\right|$.
\item $\left|\bI_{n,k}\left(\underline{010}\right)\right|=\left|\bI_{n-1}\left(\underline{010}\right)\right|-
\left(n-2-k\right)\left|\bI_{n-2,k}\left(\underline{010}\right)\right|$.
\item For $k< n-1$,
\[
\left|\bI_{n,k}\left(\underline{011}\right)\right|=\left|\bI_{n-1}\left(\underline{011}\right)\right|-
\sum_{j<k}\left|\bI_{n-2,j}\left(\underline{011}\right)\right|,
\]
and $\left|\bI_{n,n-1}\left(\underline{011}\right)\right|=\left|\bI_{n-1}(\underline{011})\right|$.
\item $\left|\bI_{n,k}\left(\underline{101}\right)\right|=\left|\bI_{n-1}\left(\underline{101}\right)\right|-
k\left|\bI_{n-2,k}\left(\underline{101}\right)\right|$.
\end{enumerate}
\end{prop}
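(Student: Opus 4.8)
The plan is to treat all four patterns with the single template already used for $\underline{110}$ and $\underline{100}$ in Propositions~\ref{prop_110} and~\ref{prop_100}. Fix a pattern $p\in\{\underline{001},\underline{010},\underline{011},\underline{101}\}$, take $e=e_{1}e_{2}\dots e_{n}\in\bI_{n,k}(p)$, and observe that deleting the last entry yields $e_{1}e_{2}\dots e_{n-1}\in\bI_{n-1}(p)$, since any consecutive occurrence in a prefix is one in the whole sequence. Conversely, appending $e_{n}=k$ to a word $e_{1}\dots e_{n-1}\in\bI_{n-1}(p)$ produces an element of $\bI_{n,k}(p)$ precisely when no occurrence of $p$ is created in the last three positions, i.e.\ when $e_{n-2}e_{n-1}k$ does not reduce to $p$. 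Writing $C_{n-1,k}$ for the set of words in $\bI_{n-1}(p)$ for which $e_{n-2}e_{n-1}k$ \emph{is} an occurrence of $p$, this gives a bijection between $\bI_{n,k}(p)$ and $\bI_{n-1}(p)\setminus C_{n-1,k}$, hence $|\bI_{n,k}(p)|=|\bI_{n-1}(p)|-|C_{n-1,k}|$, exactly as in Equation~\eqref{eq:100_1}.

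The second step is to translate the condition ``$e_{n-2}e_{n-1}k$ reduces to $p$'' into explicit inequalities on $e_{n-2}$ and $e_{n-1}$, one case per pattern: for $\underline{001}$ it reads $e_{n-2}=e_{n-1}=j<k$; for $\underline{010}$ it reads $e_{n-2}=k<e_{n-1}$; for $\underline{011}$ it reads $e_{n-2}<e_{n-1}=k$; and for $\underline{101}$ it reads $e_{n-1}<e_{n-2}=k$. In each case one of the two entries is forced to a single value while the other ranges over an interval, which is exactly what produces the two shapes of the answer: a sum $\sum_{j<k}|\bI_{n-2,j}(p)|$ when $e_{n-2}$ varies and $e_{n-1}$ is fixed (patterns $\underline{001}$ and $\underline{011}$), and a multiple $(n-2-k)|\bI_{n-2,k}(p)|$ or $k\,|\bI_{n-2,k}(p)|$ when $e_{n-2}=k$ is fixed and $e_{n-1}$ is free (patterns $\underline{010}$ and $\underline{101}$), the factor counting the admissible values $e_{n-1}\in\{k+1,\dots,n-2\}$ and $e_{n-1}\in\{0,\dots,k-1\}$, respectively.

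The third step is to verify that peeling off $e_{n-1}$ once more is clean: given the prescribed relation between $e_{n-2}$ and $e_{n-1}$, one must check that $e_{1}\dots e_{n-2}\in\bI_{n-2}(p)$ together with the prescribed $e_{n-1}$ always yields a word in $\bI_{n-1}(p)$, i.e.\ that $e_{n-3}e_{n-2}e_{n-1}$ is never itself an occurrence of $p$. This requires a short case analysis, but in each of the four cases the relation forced on $(e_{n-2},e_{n-1})$ is incompatible with $e_{n-3}e_{n-2}e_{n-1}$ reducing to $p$ (for instance, for $\underline{101}$ the condition $e_{n-1}<e_{n-2}=k$ prevents $e_{n-2}$ from being the minimum of the triple, which an occurrence of $\underline{101}$ would require), so $|C_{n-1,k}|$ factors as claimed and the recurrence follows.

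I expect the only genuine subtlety, and the source of the one exceptional formula, to be the position constraint $e_{n-1}\le n-2$. For $\underline{011}$ the forced value $e_{n-1}=k$ is attainable only when $k\le n-2$; when $k=n-1$ it is out of range, so $C_{n-1,n-1}=\emptyset$ and the recurrence degenerates to $|\bI_{n,n-1}(\underline{011})|=|\bI_{n-1}(\underline{011})|$, which is precisely the separate case recorded in part~(c). The remaining loose ends are routine: checking $n=1,2$ directly, confirming that the stated sums are empty (hence contribute $0$) in the boundary ranges of $k$, and invoking the conventions in Equation~\eqref{eq:I0} together with $|\bI_{n,j}(p)|=0$ for $n<0$ so that the formulas hold uniformly.
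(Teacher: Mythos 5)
Your proposal is correct and follows exactly the route the paper intends: the paper states these recurrences as obtained ``by the same technique used in the proof of Propositions~\ref{prop_110} and~\ref{prop_100},'' and your argument is precisely that technique, down to the set $C_{n-1,k}$ of prefixes in $\bI_{n-1}(p)$ that would create an occurrence upon appending $e_n=k$, the pattern-by-pattern translation into conditions on $(e_{n-2},e_{n-1})$, and the verification that prescribing $e_{n-1}$ never creates an occurrence at position $n-3$. You also correctly isolate the one genuine boundary phenomenon (the forced value $e_{n-1}=k$ being out of range when $k=n-1$ for $\underline{011}$, versus the other boundary cases where the vanishing of $\left|\bI_{n-2,j}(p)\right|$ for $j\geq n-2$ makes the formulas hold uniformly), so nothing further is needed.
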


\subsection{From patterns in inversion sequences to patterns in permutations}

In this subsection we discuss some correspondences between consecutive patterns of length 3 in inversion
sequences and permutation patterns. We exploit the bijection $\Theta:S_{n}\rightarrow\bI_{n}$ defined by (\ref{eq:theta_bijection}).
For $\pi\in S_n$ and $1\leq i\leq n$, let
\[
E_{i}(\pi)=\left\{j\in[n]:j<i\textnormal{ and }\pi_{j}>\pi_{i}\right\}.
\]
The entries of $e=\Theta(\pi)$ are given by $e_{i}=\left|E_{i}(\pi)\right|$.

\begin{lem}\label{lem:ref_suggest} Let $\pi\in S_{n}$, and let $e=\Theta(\pi)$ be its corresponding inversion sequence. Then
$\pi_{i}>\pi_{i+1}$ if and only if $e_{i}<e_{i+1}$.
\end{lem}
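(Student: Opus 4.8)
The plan is to establish the equivalence $\pi_i > \pi_{i+1} \iff e_i < e_{i+1}$ by directly comparing the sets $E_i(\pi)$ and $E_{i+1}(\pi)$ whose cardinalities give $e_i$ and $e_{i+1}$. The key observation is that these two sets count inversions involving positions to the left of $i$ (resp.\ $i+1$) with larger values, so I would relate them by tracking exactly which elements move in or out as we pass from index $i$ to index $i+1$.

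First I would write down both sets explicitly. We have $E_i(\pi) = \{j < i : \pi_j > \pi_i\}$ and $E_{i+1}(\pi) = \{j < i+1 : \pi_j > \pi_{i+1}\} = \{j \le i : \pi_j > \pi_{i+1}\}$. The crucial structural point is that every index $j$ with $j < i$ is eligible for both sets, while the index $j = i$ is eligible only for $E_{i+1}(\pi)$ (and it belongs there exactly when $\pi_i > \pi_{i+1}$). So I would split the comparison into the contribution from indices $j < i$ and the separate contribution from the index $j = i$.

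The heart of the argument is a case analysis on whether $\pi_i > \pi_{i+1}$ or $\pi_i < \pi_{i+1}$ (these are the only possibilities since $\pi$ is a permutation). In the case $\pi_i > \pi_{i+1}$, for each $j < i$ the condition $\pi_j > \pi_i$ implies $\pi_j > \pi_{i+1}$, so every element of $E_i(\pi)$ lies in $E_{i+1}(\pi)$; moreover $j = i$ itself satisfies $\pi_i > \pi_{i+1}$ and so contributes an extra element to $E_{i+1}(\pi)$. This forces $\left|E_{i+1}(\pi)\right| \ge \left|E_i(\pi)\right| + 1$, hence $e_{i+1} > e_i$. In the opposite case $\pi_i < \pi_{i+1}$, for each $j < i$ the condition $\pi_j > \pi_{i+1}$ implies $\pi_j > \pi_i$, so $E_{i+1}(\pi) \cap \{j < i\} \subseteq E_i(\pi)$, and the index $j = i$ does \emph{not} belong to $E_{i+1}(\pi)$; this gives $\left|E_{i+1}(\pi)\right| \le \left|E_i(\pi)\right|$, hence $e_{i+1} \le e_i$, i.e.\ not $e_i < e_{i+1}$. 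Combining the two cases yields the biconditional.

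The step requiring the most care is getting the boundary index $j = i$ accounted for correctly, since it is precisely this ``new'' position available to $E_{i+1}$ but not to $E_i$ that drives the direction of the inequality; everything else is a monotonicity observation about the comparisons $\pi_j > \pi_i$ versus $\pi_j > \pi_{i+1}$. I do not anticipate a genuine obstacle here, as the statement is essentially a translation between descents of $\pi$ and ascents of its inversion sequence, but stating the inclusions cleanly (rather than conflating the two cases) is what keeps the argument rigorous.
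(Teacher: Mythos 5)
Your proposal is correct and follows essentially the same route as the paper's proof: both compare the sets $E_i(\pi)$ and $E_{i+1}(\pi)$, use the monotonicity of the conditions $\pi_j>\pi_i$ versus $\pi_j>\pi_{i+1}$ for $j<i$, and track the boundary index $j=i$ that is available only to $E_{i+1}(\pi)$. The only difference is presentational — you run an exhaustive two-case analysis on $\pi_i$ versus $\pi_{i+1}$, whereas the paper proves the forward implication directly and the converse by contradiction — and in fact your phrasing sidesteps a small typo in the paper (which writes $i+1\in E_{i+1}(\pi)\setminus E_i(\pi)$ where the relevant index is $i$).
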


\begin{proof} Suppose $\pi_{i}>\pi_{i+1}$. Then every $j<i$ such that $\pi_{j}>\pi_{i}$ satisfies
$\pi_{j}>\pi_{i+1}$. It follows that $E_{i}(\pi)\subseteq E_{i+1}(\pi)$. Since $i+1\in E_{i+1}(\pi)\setminus E_{i}(\pi)$, this inclusion is strict, and so $e_{i}<e_{i+1}$.

Conversely, suppose $e_{i}<e_{i+1}$. If $\pi_{i}<\pi_{i+1}$, then $i\not\in E_{i+1}(\pi)$, so there must
exist some $j\in E_{i+1}(\pi)\setminus E_{i}(\pi)$ with $j<i$. But then $\pi_{i+1}<\pi_{j}<\pi_{i}$, which contradicts $\pi_{i}<\pi_{i+1}$. It follows that $\pi_{i}>\pi_{i+1}$.
\end{proof}

\begin{prop}\label{prop_012_permut} Let $\pi\in S_{n}$, and let $e=\Theta(\pi)$ be its corresponding inversion sequence. Then $\pi$ avoids
$\underline{321}$ if and only $e$ avoids $\underline{012}$. Hence,
$\left|\bI_{n}\left(\underline{012}\right)\right|=\left|S_{n}\left(\underline{321}\right)\right|$, for all $n$.
\end{prop}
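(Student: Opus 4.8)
The plan is to establish a bijective correspondence between occurrences of the consecutive pattern $\underline{012}$ in $e=\Theta(\pi)$ and occurrences of the consecutive pattern $\underline{321}$ in $\pi$, at the same position. Since $\Theta:S_n\to\bI_n$ is a bijection, this positional correspondence immediately yields that $\pi$ avoids $\underline{321}$ if and only if $e$ avoids $\underline{012}$, and hence $\left|\bI_n\left(\underline{012}\right)\right|=\left|S_n\left(\underline{321}\right)\right|$.

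The key tool is Lemma~\ref{lem:ref_suggest}, which translates descents of $\pi$ into ascents of $e$: namely $\pi_i>\pi_{i+1}$ if and only if $e_i<e_{i+1}$. First I would observe that an occurrence of $\underline{321}$ in position $i$ means $\pi_i>\pi_{i+1}>\pi_{i+2}$, while an occurrence of $\underline{012}$ in position $i$ means $e_i<e_{i+1}<e_{i+2}$. Applying Lemma~\ref{lem:ref_suggest} to the consecutive pairs $(i,i+1)$ and $(i+1,i+2)$ gives the two equivalences $\pi_i>\pi_{i+1}\iff e_i<e_{i+1}$ and $\pi_{i+1}>\pi_{i+2}\iff e_{i+1}<e_{i+2}$. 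Conjoining these, I obtain
\[
\pi_i>\pi_{i+1}>\pi_{i+2}\quad\Longleftrightarrow\quad e_i<e_{i+1}<e_{i+2},
\]
so $i\in\Em(\underline{321},\pi)$ exactly when $i\in\Em(\underline{012},e)$.

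The main point to verify carefully is that the pattern $\underline{012}$, despite allowing repeated values in inversion sequences in general, forces strictly increasing entries here, since its reduction $012$ has three distinct values; likewise $\underline{321}$ requires strict decreases. This ensures the two monotonicity conditions above are exactly the right ones and that Lemma~\ref{lem:ref_suggest}, which concerns strict inequalities, applies directly to each adjacent pair. I expect this to be the only subtlety, and it is a genuinely mild one: the argument is essentially a two-fold application of the already-established lemma with no further casework. Once the equivalence of occurrence sets is established for every position $i$, taking the contrapositive over all $i$ gives that $e$ avoids $\underline{012}$ precisely when $\pi$ avoids $\underline{321}$, and the enumerative conclusion follows from the bijectivity of $\Theta$.
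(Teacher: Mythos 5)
Your proposal is correct and follows essentially the same route as the paper: both proofs reduce the statement to Lemma~\ref{lem:ref_suggest} applied to the adjacent pairs $(i,i+1)$ and $(i+1,i+2)$, concluding that occurrences of $\underline{321}$ in $\pi$ and of $\underline{012}$ in $e$ occupy the same positions, and then invoke the bijectivity of $\Theta$. Your explicit remark that $\underline{012}$ forces strict increases (so the lemma's strict inequalities apply) is a detail the paper leaves implicit, but it is the same argument.
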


\begin{proof}
By Lemma~\ref{lem:ref_suggest}, $\pi$ contains $\underline{321}$ if and only if $e$ contains $\underline{012}$.
Equivalently, $\pi$ avoids $\underline{321}$ if and only if $e$ avoids $\underline{012}$.
Consequently, the map $\Theta$ induces a bijection between $S_{n}\left(\underline{321}\right)$ and $\bI_{n}\left(\underline{012}\right)$.
\end{proof}

The sequence $\left|S_{n}\left(\underline{321}\right)\right|$ appears as A049774 in~\cite{OEIS}. The exponential generating function for permutations avoiding $\underline{321}$ is known, see~\cite{DavidBarton,ElizaldeNoy}. It follows from~Proposition~\ref{prop_012_permut} and \cite[Theorem 4.1]{ElizaldeNoy} that
\begin{displaymath}
\sum_{n\ge0} \left|\bI_{n}\left(\underline{012}\right)\right| \frac{z^n}{n!}=\frac{\sqrt{3}}{2}\frac{\exp\left(z/2\right)}{\cos\left(\sqrt{3}z/2+\pi/6\right)}.
\end{displaymath}
Nevertheless, the recurrence in Proposition~\ref{prop:rest_recur3_012} appears to be new.

By Lemma~\ref{lem:ref_suggest}, $\pi$ contains $\underline{\left(r+1\right)r\ldots 1}$ if and only if $e$
contains $\underline{01\ldots r}$. Thus, Proposition~\ref{prop_012_permut} is generalized as follows.

\begin{prop}\label{gen_prop_012_permut} Let $\pi\in S_{n}$, and let $e=\Theta(\pi)$ be its corresponding inversion sequence. Then $\pi$ avoids $\underline{\left(r+1\right)r\ldots 1}$ if and only if $e$ avoids $\underline{01\ldots r}$. Hence,
$\left|\bI_{n}\left(\underline{01\ldots r}\right)\right|=\left|S_{n}\left(\underline{\left(r+1\right)r\ldots 1}\right)\right|$, for all $n$.
\end{prop}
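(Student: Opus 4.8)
The plan is to reduce the statement to a position-by-position application of Lemma~\ref{lem:ref_suggest}, exactly as in the proof of Proposition~\ref{prop_012_permut} but carried out over a run of length $r+1$ rather than $3$. The key observation is that both consecutive patterns involved are characterized entirely by comparisons between \emph{adjacent} entries. Indeed, a consecutive subsequence $\pi_{i}\pi_{i+1}\dots\pi_{i+r}$ is an occurrence of $\underline{\left(r+1\right)r\ldots 1}$ precisely when $\pi_{i}>\pi_{i+1}>\cdots>\pi_{i+r}$, which is equivalent to requiring $\pi_{j}>\pi_{j+1}$ for every $j$ with $i\le j\le i+r-1$. Similarly, a consecutive subsequence $e_{i}e_{i+1}\dots e_{i+r}$ of an inversion sequence has reduction $01\ldots r$ exactly when it is strictly increasing, i.e., $e_{j}<e_{j+1}$ for every $j$ with $i\le j\le i+r-1$.

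First I would fix $\pi\in S_{n}$, set $e=\Theta(\pi)$, and fix a starting position $i$. Applying Lemma~\ref{lem:ref_suggest} to each of the $r$ adjacent pairs $(j,j+1)$ with $i\le j\le i+r-1$, I would conclude that $\pi_{j}>\pi_{j+1}$ holds for all such $j$ if and only if $e_{j}<e_{j+1}$ holds for all such $j$. By the two characterizations above, this says that $\pi$ has an occurrence of $\underline{\left(r+1\right)r\ldots 1}$ in position $i$ if and only if $e$ has an occurrence of $\underline{01\ldots r}$ in position $i$. Since this equivalence holds at every position, $\pi$ contains $\underline{\left(r+1\right)r\ldots 1}$ if and only if $e$ contains $\underline{01\ldots r}$; taking contrapositives gives the avoidance equivalence. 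Finally, because $\Theta\colon S_{n}\rightarrow\bI_{n}$ is a bijection, this avoidance equivalence shows that $\Theta$ restricts to a bijection between $S_{n}\left(\underline{\left(r+1\right)r\ldots 1}\right)$ and $\bI_{n}\left(\underline{01\ldots r}\right)$, yielding the enumerative identity $\left|\bI_{n}\left(\underline{01\ldots r}\right)\right|=\left|S_{n}\left(\underline{\left(r+1\right)r\ldots 1}\right)\right|$ for all $n$.

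I expect essentially no serious obstacle here, since Lemma~\ref{lem:ref_suggest} does all the combinatorial work; the only point requiring care is to confirm that both consecutive patterns are genuinely equivalent to their respective chains of adjacent comparisons, so that the local (adjacent-pair) statement of the lemma can be iterated across an entire run of length $r+1$. This is immediate for the decreasing permutation pattern, and on the inversion-sequence side it amounts to the elementary fact that a strictly increasing word of length $r+1$ has reduction $01\ldots r$.
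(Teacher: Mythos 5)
Your proposal is correct and follows essentially the same route as the paper, which also deduces the statement by applying Lemma~\ref{lem:ref_suggest} to each adjacent pair in a run, so that occurrences of $\underline{(r+1)r\ldots 1}$ in $\pi$ correspond position-by-position to occurrences of $\underline{01\ldots r}$ in $e=\Theta(\pi)$. Your write-up merely makes explicit the pairwise iteration that the paper leaves implicit.
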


The next result relates $\underline{120}$ and $\underline{021}$ to vincular patterns in permutations.

\begin{prop}\label{prop:pat_vincular} Let $\pi\in S_{n}$, and let $e=\Theta(\pi)$ be its corresponding inversion sequence. Then:
\begin{enumerate}[label=(\alph*),leftmargin=1.5cm,, itemsep=3pt]
\item $\pi$ avoids $3\underline{214}$ if and only if $e$ avoids $\underline{120}$;
\item $\pi$ avoids  $2\underline{413}$ if and only if $e$ avoids $\underline{021}$.
\end{enumerate}
In particular,
$\left|\bI_{n}\left(\underline{120}\right)\right|=\left|S_{n}\left(3\underline{214}\right)\right|$ and $\left|\bI_{n}\left(\underline{021}\right)\right|=\left|S_{n}\left(2\underline{413}\right)\right|$ for all $n$.
\end{prop}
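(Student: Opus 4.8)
The plan is to prove both parts occurrence-by-occurrence: I will show that for every position $i$, the inversion sequence $e=\Theta(\pi)$ has an occurrence of the consecutive pattern at position $i$ exactly when $\pi$ has an occurrence of the corresponding vincular pattern whose underlined block occupies positions $i,i+1,i+2$. Summing over $i$ this gives the avoidance equivalences, and since $\Theta\colon S_n\to\bI_n$ is a bijection, it restricts to bijections $S_n(3\underline{214})\to\bI_n(\underline{120})$ and $S_n(2\underline{413})\to\bI_n(\underline{021})$, which yield the two cardinality identities. The central tool throughout is Lemma~\ref{lem:ref_suggest}.

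The first step is to apply Lemma~\ref{lem:ref_suggest} at positions $i$ and $i+1$ to read off the local shape of $\pi_i\pi_{i+1}\pi_{i+2}$. Both $\underline{120}$ and $\underline{021}$ have their largest entry in the middle, so an occurrence forces $e_i<e_{i+1}$ and $e_{i+2}<e_{i+1}$; by the lemma these translate into a descent $\pi_i>\pi_{i+1}$ and an ascent $\pi_{i+1}<\pi_{i+2}$, i.e.\ $\pi_{i+1}$ is a valley. The descent/ascent data alone cannot distinguish the relative order of $\pi_i$ and $\pi_{i+2}$, and this missing information is precisely what is carried by the remaining inequality between $e_i$ and $e_{i+2}$.

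The second step is the key computation. Expanding $e_i$ and $e_{i+2}$ via $e_m=|\{j<m:\pi_j>\pi_m\}|$ and separating the contributions of $j=i$ and $j=i+1$, one evaluates the indicators $[\pi_i>\pi_{i+2}]$ and $[\pi_{i+1}>\pi_{i+2}]$ using the valley. This expresses the difference $e_i-e_{i+2}$ as (plus or minus) a count of indices $j<i$ whose value lies in a prescribed interval, with a correction of $1$ in one case. Concretely, if $\pi_i<\pi_{i+2}$ (block $213$) then $e_i-e_{i+2}=|\{j<i:\pi_i<\pi_j<\pi_{i+2}\}|\ge 0$, whereas if $\pi_i>\pi_{i+2}$ (block $312$) then $e_{i+2}-e_i=|\{j<i:\pi_{i+2}<\pi_j<\pi_i\}|+1\ge 1$. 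For $\underline{120}$, the defining inequality $e_{i+2}<e_i$ therefore rules out block $312$ and forces block $213$ together with an index $j<i$ satisfying $\pi_i<\pi_j<\pi_{i+2}$; this $\pi_j$ is the leading $3$, giving exactly $3\underline{214}$, and the argument reverses. For $\underline{021}$, the inequality $e_i<e_{i+2}$ forces block $312$, and a parallel expansion shows $e_{i+1}-e_{i+2}=|\{j<i:\pi_{i+1}<\pi_j<\pi_{i+2}\}|$, so the separate strict inequality $e_{i+2}<e_{i+1}$ supplies an index $j<i$ with $\pi_{i+1}<\pi_j<\pi_{i+2}$, the leading $2$, yielding $2\underline{413}$.

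I expect the main obstacle to be the strictness bookkeeping. Because inversion sequences may have equal adjacent entries, an ascent in $\pi$ only forces $e_{i+1}\ge e_{i+2}$ rather than a strict inequality, so the strict inequalities demanded by the patterns are exactly what pin down the existence of the intermediate earlier entry that becomes the extra letter of the vincular pattern. Tracking which strict inequality produces that extra letter — it is $e_{i+2}<e_i$ for $\underline{120}$ but $e_{i+2}<e_{i+1}$ for $\underline{021}$ — and correctly accounting for the $+1$ correction term in the $\pi_i>\pi_{i+2}$ case is the delicate part of the verification; the remaining passage from occurrence-by-occurrence equivalence to the avoidance statements and the counting identities is routine.
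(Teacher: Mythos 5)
Your proposal is correct and takes essentially the same route as the paper: both establish an occurrence-level correspondence via Lemma~\ref{lem:ref_suggest} together with the fact that the gap between $e_i$ and $e_{i+2}$ counts indices $j<i$ with $\pi_j$ between $\pi_i$ and $\pi_{i+2}$ — the paper phrases this as strict versus non-strict inclusions $E_{i+2}(\pi)\subseteq E_{i}(\pi)$, with an element $j$ of the set difference supplying the extra letter of the vincular pattern, which is exactly the content of your difference formulas with the $+1$ indicator correction. Your only departures are cosmetic: you make the counting explicit rather than set-theoretic, and you work out part (b) in full where the paper proves only (a) and declares (b) similar.
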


\begin{proof} The proof of both results is quite similar, so we only prove (a).
It suffices to show
that $\pi$ contains $3\underline{214}$ if and only $e$ contains $\underline{120}$.

Suppose that $\pi$ contains an occurrence $\pi_j\pi_i\pi_{i+1}\pi_{i+2}$ of $3\underline{214}$, where $j<i$ and
$\pi_{i+1}<\pi_{i}<\pi_{j}<\pi_{i+2}$. By Lemma~\ref{lem:ref_suggest}, we have
$e_{i}<e_{i+1}$.
If $l\in E_{i+2}(\pi)$, then $l<i+2$ and $\pi_{l}>\pi_{i+2}$. Since $\pi_{i},\pi_{i+1}<\pi_{i+2}$,
it must be that $l<i$, and so $l\in E_{i}(\pi)$. We deduce
that $E_{i+2}(\pi)\subseteq E_{i}(\pi)$. In fact, since $j\in E_{i}(\pi)\setminus E_{i+2}(\pi)$,
the inclusion is strict, so $e_{i+2}<e_{i}$. It follows that $e_{i}e_{i+1}e_{i+2}$ is an occurrence of $\underline{120}$.

Conversely, suppose that $e$ contains an occurrence $e_ie_{i+1}e_{i+2}$ of $\underline{120}$, where $e_{i+2}<e_{i}<e_{i+1}$.
By Lemma~\ref{lem:ref_suggest}, we have $\pi_{i}>\pi_{i+1}$.
If $\pi_i>\pi_{i+2}$, then $E_{i}(\pi)\subseteq E_{i+2}(\pi)$, contradicting that $e_i>e_{i+2}$. It follows that $\pi_{i+2}>\pi_{i}$, and so $\pi_{i}\pi_{i+1}\pi_{i+2}$ is an occurrence of $\underline{213}$. Finally, since $e_i>e_{i+2}$, there exists $j\in E_{i}(\pi)\setminus E_{i+2}(\pi)$. In particular, $j<i$ and $\pi_{i}<\pi_{j}<\pi_{i+2}$, and thus $\pi_j\pi_i\pi_{i+1}\pi_{i+2}$ is an occurrence of
$3\underline{214}$.
\end{proof}

Permutations avoiding the vincular pattern $\underline{143}2$ were studied by
Baxter and Pudwell~\cite{BaxterPudwell}. The sequence $\left|S_{n}\left(\underline{143}2\right)\right|$ appears as A200404 in~\cite{OEIS}, but no
enumerative results seem to be known.
 The next corollary shows that $\left|\bI_{n}\left(\underline{120}\right)\right|=\left|S_{n}\left(\underline{143}2\right)\right|$, and so we can use the recurrence in Proposition~\ref{prop:rest_recur3_dist}(c) to compute these numbers.
Given a permutation $\pi=\pi_{1}\pi_{2}\dots \pi_{n}\in S_{n}$, its reverse-complement is the permutation $\pi^{RC}$, where $(\pi^{RC})_{i}=n+1-\pi_{n+1-i}$, for $1\leq i\leq n$.

\begin{cor}\label{cor:143-2} There is a bijection between $\bI_{n}\left(\underline{120}\right)$ and
$S_{n}\left(\underline{143}2\right)$.
\end{cor}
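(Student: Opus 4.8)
The plan is to compose two bijections. By Proposition~\ref{prop:pat_vincular}(a), the map $\Theta$ restricts to a bijection between $S_{n}\left(3\underline{214}\right)$ and $\bI_{n}\left(\underline{120}\right)$, so it suffices to produce a bijection between $S_{n}\left(3\underline{214}\right)$ and $S_{n}\left(\underline{143}2\right)$. For this I would use the reverse-complement operation $\pi\mapsto\pi^{RC}$ defined just above the statement, which is an involution on $S_{n}$, and hence a bijection.

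The key step is to verify that reverse-complement carries the vincular pattern $3\underline{214}$ to the vincular pattern $\underline{143}2$. Computing the reverse-complement of the underlying permutation $3214$ gives $1432$: reversing yields $4123$, and complementing (sending a value $v$ to $5-v$) yields $1432$. It remains to track the vincular block. The adjacency requirement in $3\underline{214}$ is imposed on the last three positions; complementation fixes positions, while reversal sends the position set $\{2,3,4\}$ to $\{1,2,3\}$, so the block is carried to the first three positions of $1432$, producing exactly $\underline{143}2$. Since reverse-complement maps any occurrence of a vincular pattern $\sigma$ to an occurrence of $\sigma^{RC}$—adjacency of a consecutive block being preserved under reversal and untouched by complementation—it follows that $\pi$ avoids $3\underline{214}$ if and only if $\pi^{RC}$ avoids $\underline{143}2$.

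Consequently, reverse-complement restricts to a bijection from $S_{n}\left(3\underline{214}\right)$ to $S_{n}\left(\underline{143}2\right)$, and composing its inverse with $\Theta$ gives the desired bijection $\bI_{n}\left(\underline{120}\right)\to S_{n}\left(\underline{143}2\right)$, explicitly $e\mapsto\bigl(\Theta^{-1}(e)\bigr)^{RC}$.

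I do not anticipate a genuine obstacle here: the substance lies entirely in the middle step, and the verification that the vincular block transforms correctly under reverse-complement is routine once positions are tracked carefully. The only point requiring care is the bookkeeping of positions versus values, so that the adjacency requirement lands on the initial $143$ of $1432$ rather than on some other triple of positions.
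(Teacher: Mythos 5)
Your proposal is correct and takes essentially the same approach as the paper: both compose the bijection of Proposition~\ref{prop:pat_vincular}(a) with the reverse-complement map $\pi\mapsto\pi^{RC}$, which carries $S_{n}\left(3\underline{214}\right)$ onto $S_{n}\left(\underline{143}2\right)$. The paper's verification of the middle step is the explicit inequality bookkeeping ($\pi_{i+1}<\pi_{i}<\pi_{j}<\pi_{i+2}$ if and only if $\pi^{RC}_{n-i-1}<\pi^{RC}_{n-j+1}<\pi^{RC}_{n-i+1}<\pi^{RC}_{n-i}$), which is exactly the routine position-versus-value check you describe.
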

\begin{proof} The map $\pi\rightarrow \pi^{RC}$ induces a bijection between $S_{n}\left(3\underline{214}\right)$ and $S_{n}\left(\underline{143}2\right)$. Indeed, for $1\leq j<i\leq n$, we have
$\pi_{i+1}<\pi_{i}<\pi_{j}<\pi_{i+2}$ if and only if $n+1-\pi_{i+2}<n+1-\pi_{j}<n+1-\pi_i<n+1-\pi_{i+1}$,
which is equivalent to $\pi^{RC}_{n-i-1}<\pi^{RC}_{n-j+1}<\pi^{RC}_{n-i+1}<\pi^{RC}_{n-i}$. Thus,
$\pi$ contains $3\underline{214}$ if and only if $\pi^{RC}$ contains $\underline{143}2$.
The result now follows from Proposition~\ref{prop:pat_vincular}(a).
\end{proof}

Next we show that the sequence counting $\underline{021}$-avoiding inversion sequences coincides with that counting $\underline{132}4$-avoiding permutations, which is given by A071075 in~\cite{OEIS}.

\begin{cor}\label{cor:132-4} For all $n$, $\left|\bI_{n}\left(\underline{021}\right)\right|=\left|S_{n}\left(\underline{132}4\right)\right|$.
\end{cor}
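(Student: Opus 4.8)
The plan is to reduce the claim to a Wilf equivalence between two vincular permutation patterns and then prove that equivalence. By Proposition~\ref{prop:pat_vincular}(b) we already have $\left|\bI_{n}\left(\underline{021}\right)\right|=\left|S_{n}\left(2\underline{413}\right)\right|$ for all $n$, so it suffices to show $\left|S_{n}\left(2\underline{413}\right)\right|=\left|S_{n}\left(\underline{132}4\right)\right|$. As a first step I would apply the reverse-complement map exactly as in the proof of Corollary~\ref{cor:143-2}. Since reverse-complement fixes the word $2413$ and carries the vincular block from the last three positions to the first three, it gives a bijection between $S_{n}\left(2\underline{413}\right)$ and $S_{n}\left(\underline{241}3\right)$. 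This puts both patterns in the same shape, namely a consecutive block of length three followed by a single free entry, and reduces the problem to proving $\left|S_{n}\left(\underline{241}3\right)\right|=\left|S_{n}\left(\underline{132}4\right)\right|$.

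The crucial difference from Corollary~\ref{cor:143-2} is that the patterns $\underline{241}3$ and $\underline{132}4$ are \emph{not} related by any of the trivial symmetries: a direct check shows that the reverse, complement, and reverse-complement of $2413$ (with the block kept on the first three positions) produce only the words $2413$ and $3142$ but never $1324$, while taking inverses leaves the class of position-vincular patterns. Thus $\underline{241}3$ and $\underline{132}4$ lie in different trivial-symmetry classes, and a genuine argument is required. I would establish the equivalence in one of two ways. The first is to construct an explicit length-preserving bijection $S_{n}\left(\underline{241}3\right)\to S_{n}\left(\underline{132}4\right)$, in the spirit of the local replacement map $\Phi_{S}$ of Lemma~\ref{lem:sub_110_100}: one scans the consecutive triples of a permutation and, at each forbidden configuration, exchanges a block of relative shape $231$ together with its later witness lying between its two largest entries for a block of shape $132$ together with its later maximal witness, checking that no occurrence is created or destroyed elsewhere. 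The second, and safer, route is to invoke the known classification of Wilf equivalences among vincular patterns of length $4$, which records this pair as equinumerous with common sequence A071075, whose exponential generating function is known.

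The main obstacle is exactly this final equivalence $\left|S_{n}\left(\underline{241}3\right)\right|=\left|S_{n}\left(\underline{132}4\right)\right|$. Because no trivial symmetry applies, one cannot simply relabel positions and values as in Corollary~\ref{cor:143-2}; instead one must track how the obstruction of $\underline{241}3$ (a consecutive triple of shape $231$ with a later entry lying strictly between its two largest values) can be transformed into the obstruction of $\underline{132}4$ (a consecutive triple of shape $132$ with a later entry exceeding the whole block) without side effects, and then verify well-definedness and invertibility in both directions. If a self-contained bijection proves too delicate, citing the existing enumeration of these vincular patterns and matching the resulting counts to A071075 yields the stated identity $\left|\bI_{n}\left(\underline{021}\right)\right|=\left|S_{n}\left(\underline{132}4\right)\right|$.
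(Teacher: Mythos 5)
Your reduction is exactly the paper's: Proposition~\ref{prop:pat_vincular}(b) gives $\left|\bI_{n}\left(\underline{021}\right)\right|=\left|S_{n}\left(2\underline{413}\right)\right|$, and the reverse-complement map (as in Corollary~\ref{cor:143-2}) gives $\left|S_{n}\left(2\underline{413}\right)\right|=\left|S_{n}\left(\underline{241}3\right)\right|$; you are also right that no trivial symmetry connects $\underline{241}3$ to $\underline{132}4$, so the remaining equivalence needs a genuine argument. Where you diverge is in how that last step is discharged. The paper does not prove $\left|S_{n}\left(\underline{241}3\right)\right|=\left|S_{n}\left(\underline{132}4\right)\right|$ directly; it routes through the intermediate pattern $\underline{231}4$, combining two precise results from the literature: $\left|S_{n}\left(\underline{132}4\right)\right|=\left|S_{n}\left(\underline{231}4\right)\right|$ (from \cite{ElizaldeII} or \cite{KitaevII}, using that $132$ and $231$ are equivalent as consecutive patterns via reversal) and $\left|S_{n}\left(\underline{241}3\right)\right|=\left|S_{n}\left(\underline{231}4\right)\right|$ (Baxter--Shattuck \cite{BaxterShattuck}). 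Your fallback route (b) is morally this argument, but as stated it is too loose in two respects: ``matching the resulting counts to A071075'' proves nothing by itself (an OEIS match is evidence, not a theorem), and an appeal to ``the known classification'' must be replaced by the actual cited equivalences above.

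Your primary route (a), the local replacement bijection, has a concrete flaw and should not be the plan of record. The mechanism of Lemma~\ref{lem:sub_110_100} works because occurrences of consecutive patterns in inversion sequences are genuinely local: an occurrence is a contiguous factor, and the non-overlapping hypothesis confines the edit's effect. An occurrence of $\underline{241}3$ is not a contiguous factor --- the witness playing the role of $3$ can be arbitrarily far to the right --- so swapping a block of shape $231$ for one of shape $132$ changes the values in the block, hence changes which distant entries can serve as witnesses for \emph{other} potential occurrences, both of the pattern you are removing and of the one you are installing; there is no analogue of the non-overlapping structure to control this, and no such bijection is known (the Baxter--Shattuck equivalence is proved by non-bijective, generating-function methods). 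So: keep your steps one and two, and replace the final step with the paper's two-citation chain through $\underline{231}4$.
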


\begin{proof} As in proof of Corollary~\ref{cor:143-2}, the map $\pi\rightarrow \pi^{RC}$ induces a bijection between $S_{n}\left(2\underline{413}\right)$ and $S_{n}\left(\underline{241}3\right)$. Similarly, by applying the reversal operation $\pi_1\pi_2\dots\pi_n\rightarrow \pi_n\dots\pi_2\pi_1$, we observe that $\left|S_{n}\left(132\right)\right|=\left|S_{n}\left(231\right)\right|$. It follows from~\cite[Proposition 3.1]{ElizaldeII} or~{\cite[Corollary~15]{KitaevII}} that $\left|S_{n}\left(\underline{132}4\right)\right|=\left|S_{n}\left(\underline{231}4\right)\right|$.
Finally, a result of Baxter and Shattuck~\cite{BaxterShattuck} states that $\left|S_{n}\left(\underline{241}3\right)\right|=\left|S_{n}\left(\underline{231}4\right)\right|$. Combining these with Proposition~\ref{prop:pat_vincular}(b), we obtain
\[
\left|\bI_{n}\left(\underline{021}\right)\right|=\left|S_{n}\left(2\underline{413}\right)\right|=\left|S_{n}\left(\underline{241}3\right)\right|=\left|S_{n}\left(\underline{231}4\right)\right|=\left|S_{n}\left(\underline{132}4\right)\right|.
\qedhere
\]
\end{proof}

The exponential generating function for permutations avoiding $\underline{132}4$ can be deduced by combining \cite[Proposition 3.1]{ElizaldeII} and \cite[Theorem 4.1]{ElizaldeNoy}. It follows from Corollary~\ref{cor:132-4} that
\begin{displaymath}\sum_{n\ge0} \left|\bI_{n}\left(\underline{021}\right)\right| \frac{z^n}{n!}=\exp\left(\int_{0}^{z} \frac{dt}{1-\int_0^t e^{-u^2/2} du}\right) .\end{displaymath}
A functional equation satisfied by the ordinary generating function for $\underline{132}4$-avoiding permutations is given by Baxter and Shattuck~\cite{BaxterShattuck}. The recurrence in Proposition~\ref{prop:rest_recur3_dist}(a) appears to be new.

\section{Consecutive Patterns of Length 4}\label{sec:conseclength4}

In this section we study consecutive patterns of length 4 in inversion sequences. Specifically, we determine the Wilf equivalence classes for consecutive patterns of length 4. This classification is summarized in Theorem~\ref{Equiv4}. There are a total of 75 consecutive  patterns of length 4, which fall into 55 Wilf equivalence classes. This is also the number of both strong and super-strong Wilf equivalence classes because, as mentioned before, all the equivalences among consecutive patterns of length 4 are super-strong equivalences. Of these 55 classes, 1 contains four patterns, 4 contain three patterns, 9 contain two patterns, and 41 contain one pattern.

The only consecutive pattern $p$ of length 4 for which the sequence $\left|\bI_{n}(p)\right|$ appears in the OEIS~\cite{OEIS} is $p=\underline{0123}$. We have that $\left|\bI_{n}(\underline{0123})\right|=\left|S_{n}(\underline{1234})\right|$, which is sequence A117158 in~\cite{OEIS}.
By Proposition~\ref{gen_prop_012_permut}, the map $\Theta$ gives a bijection between $\bI_{n}(\underline{0123})$ and $S_{n}(\underline{4321})$. The exponential generating function for this sequence is known~\cite{DavidBarton,ElizaldeNoy}. It follows from
\cite[Theorem 4.3]{ElizaldeNoy} that
\begin{displaymath}\sum_{n\ge0} \left|\bI_{n}\left(\underline{0123}\right)\right| \frac{z^n}{n!}=\frac{2}{\cos z-\sin z+e^{-z}}.\end{displaymath}

\subsection{Equivalences proved via bijections}\label{subsec:equiv4_biject}

In this subsection we use bijections to prove equivalences (ii)--(vii), (ix), (xi), (xii) and (xiv) in Theorem~\ref{Equiv4}.
All the equivalences established in this subsection are in fact stronger than super-strong Wilf equivalence: we show not only that $p\stackrel{ss}{\sim} p'$, but rather that
\begin{multline}\label{eq:symss}
\left|\left\{e\in\bI_{n}:\Em(p,e)=S,\,\Em(p',e)=T\right\}\right|=\left|\left\{e\in\bI_{n}:\Em(p,e)=T,\,\Em(p',e)=S\right\}\right|,
\end{multline}
for all $n$ and all $S,T\subseteq [n]$. In other words, the joint distribution of the positions of the occurrences of $p$ and $p'$ is symmetric. Taking the union over all $T\subseteq [n]$ in Equation~\eqref{eq:symss}, it clearly implies that $p\stackrel{ss}{\sim} p'$.

Equation~\eqref{eq:symss} is proved by exhibiting a bijection from $\bI_{n}$ to itself that changes all occurrences of $p$ into occurrences of $p'$, and vice versa.
The following notion will help us construct such a bijection.

\begin{defin}
Two consecutive patterns $p$ and $p'$ are {\it mutually non-overlapping\/} if it is impossible
for an occurrence of $p$ and an occurrence of $p'$ in an inversion sequence to overlap in more than one entry; equivalently, if for all $1<i<r$, the reductions of $p_{1}\ldots p_{i}$ and $p'_{r-i+1}\ldots p_{r}$ do not coincide, and neither do the reductions of $p'_{1}\ldots p'_{i}$ and $p_{r-i+1}\ldots p_{r}$
\end{defin}

\begin{exa} The patterns $\underline{110}$ and $\underline{010}$ are mutually non-overlapping. However, the same cannot be said about
the patterns $\underline{110}$ and $\underline{100}$. For example, the inversion sequence $e=0002211\in\bI_{7}$ has occurrences of
$\underline{110}$ and $\underline{100}$ overlapping in two entries, $e_{5}$ and~$e_{6}$.
\end{exa}

Next we prove equivalence (ii) in Theorem~\ref{Equiv4}.

\begin{prop}\label{prop:0021_ss_0121} The patterns $p=\underline{0021}$ and $p'=\underline{0121}$ satisfy Equation~\eqref{eq:symss}. In particular,  $\underline{0021}\stackrel{ss}{\sim}\underline{0121}$.
\end{prop}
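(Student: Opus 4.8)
The plan is to prove the stronger symmetric statement in Equation~\eqref{eq:symss} by constructing an explicit involution $\Phi$ on $\bI_n$ that simultaneously interchanges occurrences of $p=\underline{0021}$ with occurrences of $p'=\underline{0121}$. First I would observe the key structural feature that makes these two patterns interchangeable: an occurrence of $\underline{0021}$ in positions $i,i+1,i+2,i+3$ is a consecutive subsequence $e_ie_{i+1}e_{i+2}e_{i+3}$ with $e_i=e_{i+1}<e_{i+3}<e_{i+2}$, while an occurrence of $\underline{0121}$ has $e_i<e_{i+1}=e_{i+3}<e_{i+2}$. In both cases the first, third, and fourth relative values and the maximum (the third entry) behave the same way; the patterns differ only in whether the \emph{second} entry equals the first or equals the fourth. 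The natural map is therefore to locate each occurrence and swap the value of its second entry between these two possibilities, i.e.\ send $e_ie_{i+1}e_{i+2}e_{i+3}$ with $e_{i+1}=e_i$ to the sequence with $e_{i+1}$ replaced by $e_{i+3}$, and vice versa.

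Next I would verify that this local modification produces a valid inversion sequence: changing $e_{i+1}$ from $e_i$ to $e_{i+3}$ (or back) keeps the entry strictly below $i+1$, since in either pattern the second entry is at most the third entry, which is itself a valid entry in a larger position, and more simply because $e_{i+1}<e_{i+2}<i+2$ forces $e_{i+1}\le i$. The central technical point, and what I expect to be the main obstacle, is showing that $\Phi$ is well-defined as a global map—that the local edits at different occurrences do not interfere. This is exactly where I would invoke the notion of \emph{mutually non-overlapping} patterns introduced just before the proposition. I would check that $\underline{0021}$ and $\underline{0121}$ are mutually non-overlapping, so that any two occurrences (of either pattern) overlap in at most one entry; since each occurrence only alters its own second entry, and distinct occurrences share at most a boundary position which is never the modified interior entry, the edits are independent and $\Phi$ is unambiguously defined.

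Having established well-definedness, I would argue that $\Phi$ is an involution by noting that applying the swap twice at a given occurrence restores the original second entry, and that $\Phi$ does not create or destroy occurrences of any \emph{other} patterns at the affected positions (again using non-overlap to confine all changes). The crucial bookkeeping step is that $\Phi$ sets up a bijection between occurrences: an occurrence of $p$ at position $i$ in $e$ becomes an occurrence of $p'$ at position $i$ in $\Phi(e)$, and conversely, so that $\Em(p,e)=S$ and $\Em(p',e)=T$ if and only if $\Em(p,\Phi(e))=T$ and $\Em(p',\Phi(e))=S$. This yields Equation~\eqref{eq:symss} directly, and taking the union over all $T\subseteq[n]$ gives the super-strong Wilf equivalence $\underline{0021}\stackrel{ss}{\sim}\underline{0121}$. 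The care needed is in confirming that no \emph{spurious} occurrence of $p$ or $p'$ is introduced outside the positions of $S\cup T$; I would handle this by checking that the value of $e_{i+1}$ enters no occurrence other than the one at position $i$, a consequence of the mutually non-overlapping condition together with the fact that the modified position is interior to its occurrence.
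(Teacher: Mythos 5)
Your proposal is correct and follows essentially the same route as the paper: the paper's map $\phi$ likewise replaces the second entry of each occurrence (by $e_{i+3}$ for $\underline{0021}$, by $e_{i-1}$ for $\underline{0121}$), verifies $e'_j<j$ exactly as you do, and uses that $\underline{0021}$ and $\underline{0121}$ are non-overlapping and mutually non-overlapping to get a well-defined self-inverse bijection, yielding Equation~\eqref{eq:symss}. One small point of precision: ruling out overlaps of two occurrences of the \emph{same} pattern requires that each pattern is non-overlapping with itself in addition to the mutual condition you cite, a fact that holds here and that the paper states explicitly.
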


\begin{proof} We define a map ${\phi:\bI_{n}\rightarrow\bI_{n}}$ that
changes every occurrence of $\underline{0021}$ into an occurrence of $\underline{0121}$, and vice versa. Specifically, let $\phi(e)=e'$, where
\[
  e'_{j} = \begin{cases}
      e_{j+2}, & \text{if }e_{j-1}e_{j}e_{j+1}e_{j+2} \text{ is an occurrence of } \underline{0021},\\
      e_{j-1}, & \text{if }e_{j-1}e_{j}e_{j+1}e_{j+2} \text{ is an occurrence of } \underline{0121},\\
      e_{j}, & \text{otherwise.}
      \end{cases}
\]
For instance, if $e=00032454$, then $\phi(e)=e'=00232254$.

First, let us show that $e'$ is an inversion sequence by checking that $e'_j<j$ for all $j$.
Indeed, if ${e_{j-1}e_{j}e_{j+1}e_{j+2}}$ is an occurrence of $\underline{0121}$, then $e'_{j}=e_{j-1}<j-1<j$.
Additionally, if ${e_{j-1}e_{j}e_{j+1}e_{j+2}}$ is an occurrence of $\underline{0021}$, then $e'_{j}=e_{j+2}<e_{j+1}<j+1$, so $e'_{j}<j$.

Since $\underline{0021}$ and $\underline{0121}$ are non-overlapping and mutually non-overlapping, occurrences of these patterns can only overlap in one entry. Since $\phi$ switches occurrences of $\underline{0021}$ and $\underline{0121}$ and does not change the first or the last entry of any occurrence, we conclude that $\phi^{-1}=\phi$, so $\phi$ is a bijection.
Furthermore, the positions of the occurrences of $\underline{0021}$ (respectively, $\underline{0121}$) in $e\in\bI_n$ are precisely the positions of the occurrences of $\underline{0121}$ (respectively, $\underline{0021}$) in $\phi(e)$. It follows that Equation~\eqref{eq:symss} holds and that $\underline{0021}\stackrel{ss}{\sim}\underline{0121}$.
\end{proof}

The idea in the proof of Proposition~\ref{prop:0021_ss_0121} can be exploited to prove a more general result. In order to do this, we need to introduce some terminology.

\begin{defin}\label{def:valid_change} Let $p=\underline{p_{1}p_{2}\ldots p_{r}}$ and $p'=\underline{p'_{1}p'_{2}\ldots p'_{r}}$ be two consecutive patterns such that
$p_1=p'_1$, $p_r=p'_r$, and $\max_i\{p_i\}=\max_i\{p'_i\}=d$ (that is, $p$ and $p'$ have the same number $d+1$ of distinct entries).
Suppose that $e\in\bI_{n}$ has an occurrence of $p$ in position $i$, and let $f:\{0,1,\dots,d\}\to\{e_{i},e_{i+1},\ldots,e_{i+r-1}\}$ (where the right-hand side is viewed as a set, disregarding repetitions) be the order-preserving bijection such that ${e_{i}e_{i+1}\ldots e_{i+r-1}}=f(p_1)f(p_2)\ldots f(p_r)$.

Define a sequence $e'=e'_1e'_2\ldots e'_n$ by setting
\begin{displaymath}{e'_{i}e'_{i+1}\ldots e'_{i+r-1}}=f(p'_1)f(p'_2)\ldots f(p'_r),\end{displaymath}
and $e'_{j}=e_{j}$ for $j<i$ or $j>i+r-1$. This operation is called a {\em change} of the occurrence ${e_{i}e_{i+1}\ldots e_{i+r-1}}$ of $p$ into an occurrence of $p'$. If, additionally, $e'\in\bI_n$ (that is, $e'_j<j$ for all $j$), then we say that this change is {\em valid}.
\end{defin}

\begin{exa}\label{exa:changing} Consider $e=00134015331\in\bI_{11}$. It is not valid to change the occurrence ${e_{2}e_{3}e_{4}e_{5}e_{6}}$ of $\underline{01230}$ into an occurrence of $\underline{03210}$. Indeed, such a change would yield $e'=00431015331$, which is not an inversion sequence.
On the other hand, it is valid to change the occurrence ${e_{7}e_{8}e_{9}e_{10}e_{11}}$ of $\underline{02110}$ into an occurrence of $\underline{02100}$, and this change yields $e'=00134015311\in\bI_{11}$.
\end{exa}

\begin{defin}\label{def:changeable} Let $p=\underline{p_{1}p_{2}\ldots p_{r}}$ and $p'=\underline{p'_{1}p'_{2}\ldots p'_{r}}$ be two consecutive patterns such that
\[
p_{1}=p'_{1}, \quad p_{r}=p'_{r}, \quad\text{and}\quad \max_i\{p_i\}=\max_i\{p'_i\}.
\]
We say that $p$ is {\em changeable} for $p'$ if, for all $1\leq i\leq r$,
\[
p'_{i}\leq \max\left(  \{p_j : 1\le j\le i\} \cup \{p_j - j + i : i<j\leq r\} \right).
\]
If $p$ is changeable for $p'$ and $p'$ is changeable for $p$, then we say that $p$ and $p'$ are {\em interchangeable}.
\end{defin}

\begin{exa}\label{exa:change_patterns_with_def}  Consider the consecutive patterns $p=\underline{01230}$ and $p'=\underline{03210}$. Note that
\[
p'_{2}=3>1= \max\left(  \{p_j : 1\le j\le 2\} \cup \{p_j - j + 2 : 2<j\leq 5\} \right).
\]
Hence, $\underline{01230}$ is not changeable for $\underline{03210}$.

On the other hand, the patterns $p=\underline{0021}$ and $p'=\underline{0121}$ are interchangeable. Indeed, in this case
$p'_{2}=1= \max\left(  \{p_j : 1\le j\le 2\} \cup \{p_j - j + 2 : 2<j\leq 4\} \right)$ and $p'_{i}=p_{i}$ for $i\neq 2$, so $p$ is changeable for $p'$. Similarly, $p'$ is changeable for $p$ because $p_{i}\leq p'_{i}$, for all~$i$.
\end{exa}

The next lemma relates Definitions~\ref{def:valid_change} and~\ref{def:changeable}, showing why the concept of changeability is useful.

\begin{lem}\label{lem:characterize_changeable_patterns} Let $p=\underline{p_{1}p_{2}\ldots p_{r}}$ and $p'=\underline{p'_{1}p'_{2}\ldots p'_{r}}$ be two consecutive patterns. Then $p$ is changeable for $p'$ if and only if, for every inversion sequence $e\in\bI_{n}$, it is valid to change any given occurrence of $p$ in $e$ into an occurrence of $p'$.
\end{lem}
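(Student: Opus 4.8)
The plan is to prove both directions of the biconditional by directly unwinding Definition~\ref{def:changeable} and Definition~\ref{def:valid_change}. Recall that, given an occurrence of $p$ in position $i$ of some $e\in\bI_n$, the change into $p'$ produces $e'$ with $e'_{i+s-1}=f(p'_s)$ for $1\le s\le r$, where $f$ is the order-preserving bijection sending the entries of $p$ to the entries of the occurrence. Since $e$ is already an inversion sequence and the entries outside positions $i,\dots,i+r-1$ are unchanged, validity of the change reduces precisely to the inequalities $f(p'_s)<i+s-1$ for all $1\le s\le r$. Thus the whole statement hinges on translating the abstract condition $p'_s\le\max\left(\{p_j:1\le j\le s\}\cup\{p_j-j+s:s<j\le r\}\right)$ into these positional inequalities on $f$.

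First I would establish the key quantitative link between the value $f(p_j)$ and the position $i+j-1$ that the entry $p_j$ occupies. Because $f$ is order-preserving and the entries $e_i,\dots,e_{i+r-1}$ form an inversion sequence segment, for each $j$ we have $f(p_j)=e_{i+j-1}<i+j-1$, giving the baseline bound $f(p_j)\le i+j-2$. More usefully, I would record how $f$ behaves under the two kinds of terms appearing in the max: for $j\le s$, since $f$ is weakly increasing on values, $p'_s\le p_j$ implies $f(p'_s)\le f(p_j)=e_{i+j-1}<i+j-1\le i+s-1$; and for $j>s$, the term $p_j-j+s$ is engineered so that $p'_s\le p_j-j+s$ forces $f(p'_s)\le f(p_j)-(j-s)$ once one accounts for the fact that distinct pattern values map to values differing by at least one. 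The precise inequality I must verify is that $f(a)\le f(b)-(b-a)$ whenever $a\le b$ are pattern values, because $f$ is an order-preserving injection of integers; this discrete Lipschitz-type estimate is what converts the arithmetic shift $-j+s$ into the needed position gap.

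For the forward direction, assuming $p$ is changeable for $p'$, I would fix an arbitrary $e\in\bI_n$ and an occurrence of $p$, then show $f(p'_s)<i+s-1$ for each $s$ by splitting on which term of the max realizes the bound $p'_s\le\max(\cdots)$. If a term $p_j$ with $j\le s$ attains it, the order-preserving estimate together with $e_{i+j-1}<i+j-1$ and $j\le s$ closes the gap. If instead a term $p_j-j+s$ with $j>s$ attains it, I would combine $p'_s\le p_j-j+s$ with the discrete Lipschitz estimate to get $f(p'_s)\le f(p_j)-(j-s)=e_{i+j-1}-(j-s)<(i+j-1)-(j-s)=i+s-1$. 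Either way $e'_{i+s-1}=f(p'_s)<i+s-1$, so $e'\in\bI_n$ and the change is valid. For the converse, assuming changeability fails, I must construct a single witnessing inversion sequence $e$ and an occurrence of $p$ whose change into $p'$ is invalid. I would choose $e$ so that the occurrence of $p$ uses entries as large as the inversion-sequence constraints permit, making every baseline bound $e_{i+j-1}<i+j-1$ tight or nearly tight; then at the index $s$ where $p'_s$ exceeds the max, the corresponding $f(p'_s)$ is forced up to $i+s-1$ or beyond, violating $e'_{i+s-1}<i+s-1$.

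The main obstacle will be the converse direction: exhibiting an explicit $e$ that simultaneously makes the relevant entries of the $p$-occurrence large enough to force a violation, while still forming a legitimate inversion sequence (respecting $e_m<m$ everywhere, including positions before the occurrence that determine the admissible values). The technical heart is arranging that the order-preserving map $f$ places the $p$-values at their extremal admissible heights so that the failing index $s$ genuinely yields $f(p'_s)\ge i+s-1$; I would likely take the occurrence far out in a long inversion sequence (large $i$) and pad the earlier entries to guarantee all constraints hold, reducing the construction to verifying one failing inequality. Everything else is the routine bookkeeping of translating between pattern values and positions via the discrete Lipschitz property of $f$.
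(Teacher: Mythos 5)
Your forward direction is correct and is essentially the paper's own argument: you split according to whether the bound $p'_t\le M_t$ is witnessed by some $p_j$ with $j\le t$ or by some $p_j-j+t$ with $j>t$, and in the second case you use the fact that an order-preserving injection of integers satisfies $f(b)\ge f(a)+(b-a)$ for $a\le b$ to get $e'_{i+t-1}=f(p'_t)\le e_{i+j-1}-(j-t)<(i+j-1)-(j-t)=i+t-1$. Making that discrete Lipschitz estimate explicit is a genuine improvement in exposition; the paper uses it silently.

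The converse, however, is where the real content of the lemma lies, and there your proposal has a genuine gap: you state the goal but supply no construction, and the heuristic you sketch would not work as described. First, making ``every baseline bound $e_{i+j-1}<i+j-1$ tight or nearly tight'' is incompatible with realizing a general pattern $p$: tightness at all positions forces the occurrence to be strictly increasing, i.e.\ to reduce to $\underline{01\ldots(r-1)}$. Second, taking the occurrence ``far out'' (large $i$) buys nothing, since the violation you need, $f(p'_t)\ge i+t-1$, and the caps $e_{i+j-1}\le i+j-2$ translate together with $i$; the available slack is independent of $i$. Third, and most importantly, your plan never uses the hypothesis that changeability \emph{fails}, yet that hypothesis is exactly what makes a witness possible: one must check that the manipulated sequence is itself an inversion sequence, and in the paper's construction this verification runs precisely through the failed inequalities $p'_t>p_j-j+t$ for $t<j\le r$. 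Concretely, the paper takes the \emph{shortest} witness $c=0^{s}p$ with $s=\max\left(\{p_j-j+1:1\le j\le r\}\cup\{p'_t-t\}\right)$, then lifts only the entries with value $\ge p'_t$ by the calibrated amount $s+t-p'_t$, so that after the change the offending entry lands exactly at $e'_{s+t}=s+t$; the inversion-sequence property at the lifted positions follows from $p'_t>p_j-j+t$, and the preservation of the occurrence of $p$ (the reduction must be unchanged by the lift) requires the shift to be a uniform nonnegative amount applied above a threshold, which is why $s\ge p'_t-t$ is imposed. None of these calibrations—the shift amount, the threshold, the nonnegativity condition, or the appeal to the failure inequalities—appears in your sketch, so as written the converse direction remains unproved.
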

\begin{proof}
First we prove the forward direction. Suppose that $p$ is changeable for $p'$. Let $e\in\bI_{n}$ be an inversion sequence with an occurrence of $p$ in position $i$. Let $e'$ be the sequence obtained by changing the occurrence ${e_{i}e_{i+1}\ldots e_{i+r-1}}$ of $p$ into an occurrence of $p'$. To prove that this change is valid, it suffices to show that $e'_{i+t-1}<i+t-1$ for all $t$ such that $p'_{t}\neq p_{t}$.

Let $t$ be such that $p'_{t}\neq p_{t}$, and let
$M_{t}=\max\left(  \{p_j : 1\le j\le t\} \cup \{p_j - j + t : t<j\leq r\} \right)$. By assumption, $p'_t\le M_t$.
If $M_{t}=p_{j}$ for some $1\leq j\leq t$, then $p'_{t}\leq p_{j}$ implies that $e'_{i+t-1}\leq e_{i+j-1}<i+j-1\leq i+t-1$ because $e'_{i+t-1}=f(p'_{t})$ and $e_{i+j-1}=f(p_{j})$, where $f$ is the order-preserving bijection $f$ in Definition~\ref{def:valid_change}. On the other hand, if $M_{t}=p_{j}-j+t$ for some $t<j\leq r$, then $p'_{t}\leq p_{j}-j+t$ implies that $e'_{i+t-1}\leq e_{i+j-1}-j+t<(i+j-1)-j+t=i+t-1$.

To prove the converse, suppose now that $p$ is not changeable for $p'$. We construct an inversion sequence $e$ with an occurrence of $p$ whose change into an occurrence of $p'$ is not valid. Let $t$ be such that
\begin{equation}\label{eq:proof_changeable_charact}
p'_{t}> \max\left(  \{p_j : 1\le j\le t\} \cup \{p_j - j + t : t<j\leq r\} \right).
\end{equation}

Let $s$ be the smallest integer such that $c=0^{s}p$ is an inversion sequence and $s+t-p'_{t}$ is nonnegative. Equivalently,
\[
s=\max\left( \{p_j-j+1: 1\le j\le r\} \cup \{p'_{t}-t\} \right).
\]
We write $c=c_{1}c_{2}\ldots c_{s+r}$, where $c_{i}=0$ for $1\leq i\leq s$ and $c_{s+j}=p_{j}$ for $1\leq j\leq r$. Now, define $e=e_{1}e_{2}\ldots e_{s+r}$ by setting
\[
  e_{i} = \begin{cases}
      c_{i}+s+t-p'_{t}, & \text{if } c_{i}\geq p'_{t},\\
      c_{i}, & \text{otherwise}.
      \end{cases}
\]
Next we show that $e$ is an inversion sequence. If $i$ is such that $e_{i}=c_{i}$, then $e_{i}<i$, because $c_{i}$ is an inversion sequence. If $i$ is such that $e_{i}\neq c_{i}$, then $i=s+j$ for some $1\leq j\leq r$, and $p'_{t}\leq c_{i}=c_{s+j}=p_{j}$. By inequality~\eqref{eq:proof_changeable_charact}, $p'_{t}\le p_{j}$ implies that $t<j\le r$, and so $p'_{t}>p_{j}-j+t$. Hence,
\[
e_{i}=c_{i}+s+t-p'_{t}=p_{j}+s+t-p'_{t}<p_{j}+s+t-(p_{j}-j+t)=s+j=i.
\]
We have shown that $e_{i}<i$ in any case, and so $e\in\bI_{s+r}$.

Since $c$ has an occurrence of $p$ in position $s+1$, so does $e$. Indeed, by our definition of $s$, we know that $s+t-p'_{t}\geq 0$, so adding $s+t-p'_{t}$ to the values $c_{i}$ for which $c_{i}\geq p'_{t}$ does not alter their relative order. Let $e'$ be the sequence obtained by changing the occurrence $e_{s+1}e_{s+2}\ldots e_{s+r}$ of $p$ into an occurrence of $p'$. Then $e'_{s+t}=p'_t+s+t-p'_{t}=s+t$. This implies that $e'\not\in\bI_{s+r}$, so the change is not valid.
\end{proof}

\begin{exa} We showed in Example~\ref{exa:change_patterns_with_def} that $\underline{01230}$ is not changeable for $\underline{03210}$.
By Lemma~\ref{lem:characterize_changeable_patterns}, this is equivalent to the existence of some inversion sequence with an occurrence of $\underline{01230}$ whose change into $p'$ is not valid. Example~\ref{exa:changing} gives such an inversion sequence.

On the other hand, we determined in Example~\ref{exa:change_patterns_with_def} that $\underline{0021}$ and $\underline{0121}$ are interchangeable.
Again by Lemma~\ref{lem:characterize_changeable_patterns}, this is equivalent to the fact that, in every $e\in\bI_{n}$, it is valid to change any occurrence of $\underline{0021}$ (respectively, $\underline{0121}$) in $e$ into an occurrence of $\underline{0121}$ (respectively, $\underline{0021}$). This property was shown in the proof of Proposition~\ref{prop:0021_ss_0121}.
\end{exa}

\begin{thm}\label{thm:general_extend_110_100_biject} Let $p$ and $p'$ be non-overlapping, mutually non-overlapping and interchangeable consecutive patterns. Then $p$ and $p'$ satisfy Equation~\eqref{eq:symss}. In particular, $p\stackrel{ss}{\sim} p'$.
\end{thm}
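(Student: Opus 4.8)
The plan is to generalize the construction from the proof of Proposition~\ref{prop:0021_ss_0121} to these arbitrary patterns. I would define a map $\phi:\bI_n\to\bI_n$ that simultaneously changes every occurrence of $p$ into an occurrence of $p'$ and every occurrence of $p'$ into an occurrence of $p$. Concretely, for $e\in\bI_n$ and each $i\in\Em(p,e)\cup\Em(p',e)$, I would replace the window $e_ie_{i+1}\dots e_{i+r-1}$ by the corresponding change in the sense of Definition~\ref{def:valid_change}, leaving all other entries fixed, and set $\phi(e)=e'$ to be the resulting sequence. I would then show that $\phi$ is an involution that swaps the occurrence positions of $p$ and $p'$, which yields Equation~\eqref{eq:symss} directly.

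First I would check that $\phi$ is well defined, i.e.\ that $\phi(e)\in\bI_n$. Because $p$ and $p'$ are non-overlapping and mutually non-overlapping, any two occurrences of $p$ or $p'$ in $e$ share at most one entry, and that entry is necessarily a common endpoint; hence the interiors of distinct occurrence windows are pairwise disjoint. Since a change fixes the first and last entries of an occurrence (as $p_1=p'_1$ and $p_r=p'_r$), the changes prescribed at different occurrences act on disjoint sets of positions and do not interfere. Each individual change is valid by interchangeability together with Lemma~\ref{lem:characterize_changeable_patterns}, so every modified entry $e'_j$ satisfies $e'_j<j$, whence $e'\in\bI_n$.

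The core of the argument is to establish the two identities
\[
\Em(p,\phi(e))=\Em(p',e)\qquad\text{and}\qquad \Em(p',\phi(e))=\Em(p,e).
\]
The inclusions $\supseteq$ are immediate from the construction, since changing a $p$-occurrence produces a $p'$-occurrence in the same position and vice versa. The inclusions $\subseteq$, asserting that no \emph{spurious} occurrences are created, are the main obstacle. To prove, say, $\Em(p',\phi(e))\subseteq\Em(p,e)$, I would take $i\in\Em(p',\phi(e))$ with window $W=[i,i+r-1]$ and first argue that the endpoints $i$ and $i+r-1$ cannot be positions where $e$ and $e'$ differ: if, for instance, $i$ were interior to some occurrence window $O$ of $e$, then $W$ and the changed copy of $O$ would be two occurrences in $e'$ (either two of $p'$, or one of $p'$ and one of $p$) overlapping in at least two entries, contradicting the non-overlapping or the mutually non-overlapping hypothesis. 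Once both endpoints of $W$ are known to be unchanged, a short index computation shows that the presence of any changed interior position of $W$ forces $W$ to coincide exactly with an occurrence window $O$ of $e$; comparing the two patterns then rules out $O$ being a $p'$-occurrence (which $\phi$ would have turned into a $p$-occurrence), so $O$ is a $p$-occurrence and $i\in\Em(p,e)$. The remaining case, that $W$ is entirely unchanged, would make $W$ a $p'$-occurrence already present in $e$, which $\phi$ would have changed---a contradiction.

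With these two identities in hand, applying $\phi$ a second time changes the occurrences back and recovers $e$ (the first and last entries and the maximal value of each window are preserved, so the order-preserving bijection of Definition~\ref{def:valid_change} reverses the change), so $\phi$ is an involution and in particular a bijection. Finally, the identities show that $\phi$ restricts to a bijection between $\{e:\Em(p,e)=S,\ \Em(p',e)=T\}$ and $\{e:\Em(p,e)=T,\ \Em(p',e)=S\}$, which is precisely Equation~\eqref{eq:symss}; taking the union over all $T\subseteq[n]$ then yields $p\stackrel{ss}{\sim}p'$, as observed after Equation~\eqref{eq:symss}.
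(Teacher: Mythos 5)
Your proposal is correct and takes essentially the same approach as the paper: both define the involution $\phi:\bI_{n}\rightarrow\bI_{n}$ that simultaneously changes occurrences of $p$ into occurrences of $p'$ and vice versa, invoking interchangeability together with Lemma~\ref{lem:characterize_changeable_patterns} for the validity of each change, and the non-overlapping and mutually non-overlapping hypotheses to make all changes at once and conclude $\phi^{-1}=\phi$. The paper's proof is in fact terser (it simply mimics Proposition~\ref{prop:0021_ss_0121}), so your explicit verification that no spurious occurrences arise, i.e.\ that $\Em(p,\phi(e))=\Em(p',e)$ and $\Em(p',\phi(e))=\Em(p,e)$, correctly fills in a step the paper leaves implicit.
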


\begin{proof} Mimicking the proof of Proposition~\ref{prop:0021_ss_0121}, we define a map $\phi:\bI_{n}\rightarrow\bI_{n}$ that changes occurrences of $p$ into occurrences of $p'$ and vice versa. Since $p$ and $p'$ are interchangeable, Lemma~\ref{lem:characterize_changeable_patterns} implies that it is possible to make these changes. Moreover, since $p$ and $p'$ are non-overlapping and mutually non-overlapping, all of these changes can be made simultaneously, and therefore $\phi$ is well-defined and $\phi^{-1}=\phi$. Hence, $\phi$ is a bijection and the result follows.
\end{proof}

We now list equivalences between consecutive patterns of length 4 that follow as particular instances of Theorem~\ref{thm:general_extend_110_100_biject}. In each case, it can be easily verified that the patterns in question are non-overlapping, mutually non-overlapping and interchangeable. We use the same labeling from Theorem~\ref{Equiv4}.

\begin{cor}\label{cor:biject} The following equivalences hold:\vspace{-6pt}
\begin{multicols}{2}
 \begin{enumerate}[itemsep=1ex,leftmargin=1.5cm]
 \item[(iii)] $\underline{1002}\stackrel{ss}{\sim}\underline{1012}\stackrel{ss}{\sim}\underline{1102}$.
\item[(iv)] $\underline{0100}\stackrel{ss}{\sim}\underline{0110}$.
\item[(v)] $\underline{2013}\stackrel{ss}{\sim}\underline{2103}$.
\item[(vi)] $\underline{1200}\stackrel{ss}{\sim}\underline{1210}
\stackrel{ss}{\sim}\underline{1220}$.
\item[(vii)] $\underline{0211}\stackrel{ss}{\sim}\underline{0221}$.
\item[(ix)] $\underline{1001}
\stackrel{ss}{\sim}\underline{1011}\stackrel{ss}{\sim}\underline{1101}$.
\item[(xi)] $\underline{2001}\stackrel{ss}{\sim}\underline{2011}\stackrel{ss}{\sim}\underline{2101}
\stackrel{ss}{\sim}\underline{2201}$.
\item[(xii)] $\underline{2012}\stackrel{ss}{\sim}\underline{2102}$.
\item[(xiv)] $\underline{3012}\stackrel{ss}{\sim}\underline{3102}$.
 \end{enumerate}
 \end{multicols}
\end{cor}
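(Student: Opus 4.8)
The plan is to obtain every equivalence in the statement as a direct application of Theorem~\ref{thm:general_extend_110_100_biject}. That theorem requires, for the relevant pair $p,p'$, three conditions: (a) each of $p$ and $p'$ is non-overlapping; (b) $p$ and $p'$ are mutually non-overlapping; and (c) $p$ and $p'$ are interchangeable. For a pairwise equivalence such as (iv), (v), (vii), (xii), or (xiv), it suffices to verify these three conditions for the single pair and invoke the theorem. For the chains (iii), (vi), (ix), and (xi), which involve three or four patterns, I would verify the three conditions for each \emph{consecutive} pair in the chain, deduce $\stackrel{ss}{\sim}$ for that pair from Theorem~\ref{thm:general_extend_110_100_biject}, and then use that $\stackrel{ss}{\sim}$ is an equivalence relation to close up the whole chain by transitivity. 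Altogether this is a finite inspection applied to each of the nine listed items.

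Each of the three conditions reduces to a short check on words of length $r=4$. For the non-overlapping and mutually non-overlapping conditions I would use the reduction criteria in the respective definitions. To confirm that a single pattern is non-overlapping, I compare, for each $1<i<4$, the reduction of its length-$i$ prefix with the reduction of its length-$i$ suffix and check that they differ. To confirm that $p$ and $p'$ are mutually non-overlapping, I compare the reduction of the length-$i$ prefix of one pattern with the reduction of the length-$i$ suffix of the other, in both orders, again over $1<i<4$. Since in every listed item one has $p_1=p'_1$ and $p_r=p'_r$ while the interior behaviour forces these prefix--suffix reductions to disagree, the checks succeed; they involve at most two values of $i$ in each case.

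For interchangeability I would apply Definition~\ref{def:changeable}. First I confirm the structural prerequisites $p_1=p'_1$, $p_r=p'_r$, and $\max_i\{p_i\}=\max_i\{p'_i\}$, all immediate from the listed patterns. Then, for every index $1\le i\le 4$, I verify the inequality
\[
p'_{i}\le \max\left(\{p_j:1\le j\le i\}\cup\{p_j-j+i:i<j\le r\}\right),
\]
together with the symmetric inequality obtained by exchanging the roles of $p$ and $p'$. Because $r=4$, each maximum is taken over at most four integers, so this is a brief computation. By Lemma~\ref{lem:characterize_changeable_patterns}, confirming these inequalities is exactly equivalent to knowing that the corresponding changes of occurrences are always valid inside inversion sequences, which is precisely what the bijection $\phi$ in Theorem~\ref{thm:general_extend_110_100_biject} relies on.

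I do not expect a genuine obstacle here, since the argument is a verification rather than a new construction; the theorem already supplies the bijective mechanism. The one spot requiring care is the shifted term $p_j-j+i$ (for $j>i$) in the changeability inequality: this is the summand encoding the inversion-sequence constraint $e_m<m$, and it is the part most easily mishandled when done by hand. Once these finite checks are carried out for each adjacent pair appearing in items (iii)--(vii), (ix), (xi), (xii), and (xiv), Theorem~\ref{thm:general_extend_110_100_biject} yields Equation~\eqref{eq:symss}, and in particular $\stackrel{ss}{\sim}$, for each such pair; transitivity then completes the three- and four-term chains, establishing the corollary.
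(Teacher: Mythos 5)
Your proposal is correct and takes essentially the same approach as the paper: the paper's proof consists precisely of noting that each listed item is a particular instance of Theorem~\ref{thm:general_extend_110_100_biject}, the hypotheses (non-overlapping, mutually non-overlapping, interchangeable) being easily verified by the same finite checks you describe. Your only, harmless, deviation is to verify consecutive pairs in the three- and four-term chains and close them up by transitivity of $\stackrel{ss}{\sim}$, which is valid since super-strong Wilf equivalence is an equivalence relation.
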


\subsection{Equivalences between non-overlapping patterns proved via inclusion-exclusion}\label{subsec:equiv4_non_over_inc_exc}

The equivalences between consecutive patterns of length 4 in this subsection follow from the next result, which has weaker hypotheses than Theorem~\ref{thm:general_extend_110_100_biject}. It relaxes the condition  of $p$ and $p'$ being mutually non-overlapping, at the expense of not proving Equation~\eqref{eq:symss} and not producing a direct bijection changing all the occurrences of $p$ into occurrences of $p'$.

\begin{thm}\label{thm:general_extend_110_100_sergi} Let $p$ and $p'$ be non-overlapping and interchangeable consecutive patterns. Then $p\stackrel{ss}{\sim} p'$.
\end{thm}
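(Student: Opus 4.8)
The plan is to prove Theorem~\ref{thm:general_extend_110_100_sergi} by generalizing the inclusion-exclusion argument that established Proposition~\ref{prop:110ss100}. The key observation is that Lemma~\ref{lem:inclusion_exlusion} already reduces the super-strong Wilf equivalence $p\stackrel{ss}{\sim}p'$ to the single combinatorial identity
\[
\left|\left\{e\in\bI_{n}:\Em(p,e)\supseteq S\right\}\right|=\left|\left\{e\in\bI_{n}:\Em(p',e)\supseteq S\right\}\right|
\]
for all $n$ and all $S\subseteq[n]$. So the entire task is to establish this ``$\supseteq S$'' identity under the weaker hypotheses, namely that $p$ and $p'$ are non-overlapping and interchangeable, but \emph{not necessarily} mutually non-overlapping. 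The mutual non-overlapping condition was exactly what allowed the direct bijection $\phi$ in Theorem~\ref{thm:general_extend_110_100_biject} to switch \emph{all} occurrences at once; without it, an occurrence of $p$ and an occurrence of $p'$ could overlap in two entries, so we cannot freely toggle patterns of one type into the other without interfering with one another. The ``$\supseteq S$'' formulation sidesteps this because it only ever asks us to turn occurrences of \emph{one fixed pattern} into occurrences of the \emph{same other fixed pattern} at a prescribed set $S$ of positions, and we never need occurrences of both $p$ and $p'$ to coexist.

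Concretely, I would fix $n$ and $S\subseteq[n]$ and construct a bijection
\[
\Phi_{S}:\left\{e\in\bI_{n}:\Em(p,e)\supseteq S\right\}\longrightarrow\left\{e'\in\bI_{n}:\Em(p',e')\supseteq S\right\}
\]
by changing the occurrence of $p$ at each position $i\in S$ into an occurrence of $p'$, using the ``change'' operation of Definition~\ref{def:valid_change}. Because $p$ is changeable for $p'$, Lemma~\ref{lem:characterize_changeable_patterns} guarantees each such change is valid, i.e.\ the result remains an inversion sequence. Because $p$ is \emph{non-overlapping}, any two occurrences of $p$ (in particular those at positions of $S$) overlap in at most one entry, so the windows $e_{i}\dots e_{i+r-1}$ for $i\in S$ can share only their boundary entries; since the change operation fixes the first and last entries of each window (as $p_1=p'_1$ and $p_r=p'_r$), these simultaneous changes do not conflict and $\Phi_S$ is well-defined. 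The inverse $\Psi_S$ performs the reverse change at each $i\in S$, which is legitimate because $p'$ is changeable for $p$ (interchangeability) and $p'$ is non-overlapping. One must check that $\Phi_S$ actually lands in the target set, i.e.\ that every $i\in S$ is genuinely an occurrence of $p'$ in $\Phi_S(e)$, and that $\Phi_S$ and $\Psi_S$ are mutually inverse; both follow from the fact that the change at a single validated window produces exactly the intended reduced pattern and is reversed by the opposite change. Finally, applying Lemma~\ref{lem:inclusion_exlusion} converts the ``$\supseteq S$'' equality into $p\stackrel{ss}{\sim}p'$.

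The main obstacle is the well-definedness of $\Phi_S$ in the absence of mutual non-overlapping, and this is precisely where the subtlety differs from Theorem~\ref{thm:general_extend_110_100_biject}. The earlier bijection $\phi$ used mutual non-overlapping so that occurrences of $p$ and occurrences of $p'$ never collide; here I instead only ever handle occurrences of a \emph{single} pattern at a time (all of type $p$ on the source side, all of type $p'$ on the target side), so only the ordinary non-overlapping of that one pattern is needed to guarantee the windows at positions of $S$ are essentially disjoint. The delicate point to verify carefully is that changing $p\to p'$ at one window $i\in S$ does not accidentally destroy the $p$-occurrence at another window $i'\in S$ before its own change is applied, or create spurious overlaps; this is where I would lean on the fact that the change operation alters only the strictly interior entries of a window and that non-overlapping forces distinct windows of $S$ to meet in at most a shared endpoint, which is left untouched. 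Once these consistency checks are dispatched, the counting identity and hence the theorem follow formally from the two cited lemmas.
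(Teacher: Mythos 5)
Your proposal is correct and takes essentially the same approach as the paper: the paper's proof likewise fixes $n$ and $S$, defines the maps $\Phi_{S}$ and $\Psi_{S}$ that change the occurrences of $p$ (respectively $p'$) at the positions of $S$ via Definition~\ref{def:valid_change}, invokes Lemma~\ref{lem:characterize_changeable_patterns} together with the non-overlapping hypothesis for well-definedness, and concludes by Lemma~\ref{lem:inclusion_exlusion}. Your more detailed verification that simultaneous changes do not conflict (windows in $S$ sharing at most a fixed boundary entry, since $p_1=p'_1$ and $p_r=p'_r$) is exactly the content the paper leaves implicit.
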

\begin{proof} We mimic the proof of Lemma~\ref{lem:sub_110_100} and prove that
\[
\left|\left\{e\in\bI_{n}:\Em(p,e)\supseteq S\right\}\right|=\left|\left\{e\in\bI_{n}:\Em(p',e)\supseteq S\right\}\right|
\]
for all positive integers $n$ and all $S\subseteq [n]$.

Fix $n$ and $S$, and consider the map
\[
\Phi_{S}:\left\{e\in\bI_{n}:\Em(p,e)\supseteq S\right\}\rightarrow\left\{e\in\bI_{n}:\Em(p',e)\supseteq S\right\}
\]
that changes the occurrences of $p$ in $e$ in positions of $S$ into occurrences of $p'$. Given that $p$ is non-overlapping and changeable for $p'$, Lemma~\ref{lem:characterize_changeable_patterns} implies that $\Phi_{S}$ is well-defined.

Similarly, we define the map
$\Psi_{S}:\left\{e\in\bI_{n}:\Em(p',e)\supseteq S\right\}\rightarrow\left\{e\in\bI_{n}:\Em(p,e)\supseteq S\right\}$ that changes the occurrences of $p'$ in $e$ in positions of $S$ into occurrences of $p$. Since $p'$ is non-overlapping and changeable for $p$, we deduce again that $\Psi_{S}$ is well-defined. It is clear that $\Phi_{S}$ and $\Psi_{S}$ are inverses of each other, which implies that $\Phi_{S}$ is a bijection.
The result now follows from Lemma~\ref{lem:inclusion_exlusion}.
\end{proof}

The next corollary is a consequence of Theorem~\ref{thm:general_extend_110_100_sergi}. The equivalences are labeled as in Theorem~\ref{Equiv4}.

\begin{cor}\label{cor:extend_110_100_sergi} The following equivalences hold:\vspace{-6pt}
\begin{multicols}{2}
\begin{enumerate}[label=(\roman*),leftmargin=1.5cm,, itemsep=3pt]
\item[(viii)] $\underline{1000}\stackrel{ss}{\sim}\underline{1110}$.
\item[(x)] $\underline{2100}\stackrel{ss}{\sim}\underline{2210}$.
  \end{enumerate}
   \end{multicols}
\end{cor}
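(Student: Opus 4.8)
The plan is to verify directly that each of the pairs in (viii) and (x) satisfies the hypotheses of Theorem~\ref{thm:general_extend_110_100_sergi}, namely that both patterns are non-overlapping and that they are interchangeable; the conclusion $p\stackrel{ss}{\sim}p'$ then follows at once. First I would explain why these two equivalences are not already instances of Corollary~\ref{cor:biject}: the pairs $\{\underline{1000},\underline{1110}\}$ and $\{\underline{2100},\underline{2210}\}$ fail to be mutually non-overlapping. For example, the reduction of the prefix $p_1p_2=10$ of $\underline{1000}$ equals the reduction of the suffix $p'_3p'_4=10$ of $\underline{1110}$, so an occurrence of $\underline{1110}$ can end in the same two entries in which an occurrence of $\underline{1000}$ begins; the same phenomenon ($\underline{10}$ arising as both a prefix reduction of one pattern and a suffix reduction of the other) occurs for $\underline{2100}$ and $\underline{2210}$. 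This is precisely the situation in which the bijective Theorem~\ref{thm:general_extend_110_100_biject} does not apply, and the inclusion--exclusion argument behind Theorem~\ref{thm:general_extend_110_100_sergi} is required.

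The first step is to confirm that each of the four patterns is non-overlapping. For a pattern $\underline{p_1p_2p_3p_4}$ this means checking, for $i=2,3$, that the reduction of the length-$i$ prefix $p_1\dots p_i$ differs from the reduction of the length-$i$ suffix $p_{4-i+1}\dots p_4$. For $\underline{1000}$ the comparisons are $\underline{10}$ versus $\underline{00}$ and $\underline{100}$ versus $\underline{000}$, both differing; the analogous checks for $\underline{1110}$, $\underline{2100}$ and $\underline{2210}$ are equally short and immediate.

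The second step is to verify interchangeability in the sense of Definition~\ref{def:changeable}. For each pair I would first record the three prerequisites: the first entries agree, the last entries agree, and the maxima agree (for instance $\underline{1000}$ and $\underline{1110}$ both begin with $1$, end with $0$, and have maximum $1$; $\underline{2100}$ and $\underline{2210}$ both begin with $2$, end with $0$, and have maximum $2$). Then I would check the changeability inequality
\[
p'_i\le\max\left(\{p_j:1\le j\le i\}\cup\{p_j-j+i:i<j\le r\}\right)
\]
for every $i$, together with the symmetric inequality obtained by interchanging $p$ and $p'$. These reduce to finitely many elementary comparisons, and the only ones that are not trivial are the positions where $p'$ has a strictly larger entry than $p$; there one checks that the larger value is still bounded by an earlier entry of $p$ (for (viii), $p'_2=p'_3=1$ are each bounded by $p_1=1$; for (x), $p'_2=2$ is bounded by $p_1=2$).

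Having established that both patterns in each pair are non-overlapping and interchangeable, Theorem~\ref{thm:general_extend_110_100_sergi} immediately gives $p\stackrel{ss}{\sim}p'$, completing the proof. I expect the changeability inequalities to be the only delicate point, because they mix the actual pattern values with the shifted quantities $p_j-j+i$; the thing to watch is simply that the shifted terms never force a violation. I would also emphasize that, in contrast with Corollary~\ref{cor:biject}, one cannot exhibit a single explicit involution on $\bI_n$ here—the failure of mutual non-overlapping allows occurrences of $p$ and $p'$ to interfere—so the argument must pass through the set-containment counting of Lemma~\ref{lem:inclusion_exlusion} via Theorem~\ref{thm:general_extend_110_100_sergi} rather than a direct bijection.
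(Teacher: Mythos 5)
Your proof is correct and takes essentially the same route as the paper, which proves this corollary by verifying (there merely asserted as straightforward) that each pair is non-overlapping and interchangeable and then invoking Theorem~\ref{thm:general_extend_110_100_sergi}; your explicit checks of the reductions and of the changeability inequalities fill in exactly those routine verifications. Your observation that the pairs fail to be mutually non-overlapping, so that Theorem~\ref{thm:general_extend_110_100_biject} cannot be used, also matches the remark the paper makes immediately after the corollary.
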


\begin{proof} It is straightforward to verify that $\underline{1000}$ and $\underline{1110}$ are non-overlapping and interchangeable, and that the same is true for $\underline{2100}$ and $\underline{2210}$. The result follows from Theorem~\ref{thm:general_extend_110_100_sergi}.
\end{proof}

The patterns $\underline{1000}$ and $\underline{1110}$ (respectively, $\underline{2100}$ and $\underline{2210}$) may overlap with one another, so it is not possible to deduce Corollary~\ref{cor:extend_110_100_sergi} from Theorem~\ref{thm:general_extend_110_100_biject}.

\subsection{Equivalences between overlapping patterns proved via inclusion-exclusion}\label{subsec:equiv4_over_inc_exc}

All the equivalences proved so far have relied on the patterns being non-overlapping. In this subsection we adapt the proof of Theorem~\ref{thm:general_extend_110_100_sergi} to patterns that may have occurrences overlapping in more than one entry. We start by proving equivalence (i) from Theorem~\ref{Equiv4}.

\begin{prop}\label{prop:0102_0112} The patterns $\underline{0102}$ and $\underline{0112}$ are super-strongly Wilf equivalent.
\end{prop}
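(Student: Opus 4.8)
My plan is to prove the stronger set-theoretic identity
\[
\left|\left\{e\in\bI_{n}:\Em(\underline{0102},e)\supseteq S\right\}\right|=\left|\left\{e\in\bI_{n}:\Em(\underline{0112},e)\supseteq S\right\}\right|
\]
for every $n$ and every $S\subseteq[n]$, from which $\underline{0102}\stackrel{ss}{\sim}\underline{0112}$ follows by Lemma~\ref{lem:inclusion_exlusion}, exactly as in the proof of Proposition~\ref{prop:110ss100}. The reason Theorem~\ref{thm:general_extend_110_100_sergi} does not apply is that $\underline{0102}$ and $\underline{0112}$ are \emph{not} non-overlapping: the reductions of $p_1p_2$ and $p_3p_4$ coincide for both patterns (both equal $01$), so two occurrences of either pattern can share two entries, at positions differing by $2$. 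Consequently the occurrences indexed by $S$ can link into chains at positions $i,i+2,i+4,\dots$, and the naive self-inverse ``change'' map used in the non-overlapping cases no longer works.

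First I would define the forward map $\Phi_{S}$. Since $\underline{0102}$ and $\underline{0112}$ differ only in their third entry, an occurrence $e_ie_{i+1}e_{i+2}e_{i+3}$ of $\underline{0102}$ (so $e_i=e_{i+2}<e_{i+1}<e_{i+3}$) becomes an occurrence of $\underline{0112}$ precisely by raising the third entry from $e_{i+2}=e_i$ to $e_{i+1}$. Accordingly, for $e$ with $\Em(\underline{0102},e)\supseteq S$, set
\[
e'_{j}=\begin{cases} e_{j-1}, & \text{if } j-2\in S,\\ e_{j}, & \text{otherwise.}\end{cases}
\]
Two occurrences of $\underline{0102}$ cannot start at consecutive positions (this would force $e_{i+1}<e_{i+3}$ and $e_{i+1}=e_{i+3}$ simultaneously), so at most one of $j-2,j-1$ lies in $S$ and $\Phi_{S}$ is unambiguous; moreover $e'_{j}=e_{j-1}<j-1<j$ whenever $j-2\in S$, so $e'\in\bI_{n}$. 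The substantive check is that $\Em(\underline{0112},e')\supseteq S$. Isolated occurrences in $S$ reduce to the usual non-overlapping computation; for a maximal chain $i,i+2,\dots,i+2(m-1)$ of $S$, the entries of $e$ in positions $i,\dots,i+2m+1$ have the form $v,b_1,v,b_2,\dots,v,b_{m+1}$ with $v<b_1<\cdots<b_{m+1}$, and $\Phi_{S}$ turns them into $v,b_1,b_1,b_2,b_2,\dots,b_m,b_m,b_{m+1}$, so each window $e'_{i+2k}\cdots e'_{i+2k+3}$ reduces to $\underline{0112}$.

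The crux, and the place where overlapping forces a genuine departure from the earlier arguments, is invertibility. Undoing each occurrence of $\underline{0112}$ by the literal reverse change would set the third entry of the window at $i+2k$ to \emph{its own} first entry $b_k$, rather than to the value $v$ that originally sat there; on a chain of length $m\ge 2$ this fails to recover $e$. I would therefore define the inverse $\Psi_{S}$ so that it propagates the leading value of each chain: for $j$ with $j-2\in S$, set $e_{j}=e'_{i}$, where $i$ is the start of the maximal run $i,i+2,\dots$ of $S$ containing $j-2$, and $e_{j}=e'_{j}$ otherwise. Because within a chain of $e'$ one has $e'_{i}=v$ and $e'_{i+2k-1}=e'_{i+2k}=b_k$, this restores every even chain slot to $v$ while leaving the $b_k$ untouched, returning exactly the original $e$; conversely $\Phi_{S}\circ\Psi_{S}$ is the identity since $e'_{i+2k}=e'_{i+2k-1}$ on chains. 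One checks $\Psi_{S}(e')\in\bI_{n}$ (here $e_{j}=v<i\le j$) and $\Em(\underline{0102},\Psi_{S}(e'))\supseteq S$ by the same window computation read in reverse.

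With $\Phi_{S}$ and $\Psi_{S}$ mutually inverse bijections between $\{e:\Em(\underline{0102},e)\supseteq S\}$ and $\{e:\Em(\underline{0112},e)\supseteq S\}$, Lemma~\ref{lem:inclusion_exlusion} yields $\underline{0102}\stackrel{ss}{\sim}\underline{0112}$. I expect the main obstacle to be exactly the chaining of overlapping occurrences: it is what breaks the symmetric self-inverse change map of the non-overlapping cases and forces the asymmetric definition of $\Psi_{S}$, which must recover the shared leading value of each chain instead of locally reversing each occurrence. (The validity of the individual entry changes is guaranteed because $\underline{0102}$ and $\underline{0112}$ are interchangeable, as in Lemma~\ref{lem:characterize_changeable_patterns}; the only new work is the bookkeeping across overlaps.)
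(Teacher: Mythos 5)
Your proposal is correct and matches the paper's proof essentially step for step: your forward map $\Phi_{S}$, your decomposition of $S$ into maximal difference-$2$ chains (the paper's ``blocks'' $B_{j}$), and your inverse $\Psi_{S}$ that propagates the leading value of each chain (the paper's $e_{i}=e'_{i_{j}}$ for $i-2\in B_{j}$) are exactly the paper's construction, finished by the same appeal to Lemma~\ref{lem:inclusion_exlusion}. The only cosmetic difference is that the paper dispenses with consecutive elements of $S$ by a single without-loss-of-generality remark, whereas you spell out why two occurrences of $\underline{0102}$ cannot start at adjacent positions.
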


\begin{proof} We will prove that, for all $n$ and all $S\subseteq[n]$,
\begin{equation}\label{eq:0102_0112}
\left|\left\{e\in\bI_{n}:\Em(\underline{0102},e)\supseteq S\right\}\right|=\left|\left\{e\in\bI_{n}:\Em(\underline{0112},e)\supseteq S\right\}\right|.
\end{equation}
There is no loss of generality in assuming that there are no consecutive elements in $S$, because no two occurrences of $\underline{0102}$ (or $\underline{0112}$) can overlap in three entries.

We can write $S$ uniquely as a disjoint union of {\em blocks}, which we define as maximal subsets whose entries form an arithmetic sequence with difference $2$. Explicitly, $S=\bigsqcup_{j=1}^{m}B_{j}$, with blocks $B_{j}=\left\{i_{j},i_{j}+2,i_{j}+4\ldots,i_{j}+2l_{j}\right\}$ and $i_{j}+2(l_{j}+1)<i_{j+1}$ for $1\leq j\leq m-1$.

Define a map $\Phi_{S}:\left\{e\in\bI_{n}:\Em(\underline{0102},e)\supseteq S\right\}\rightarrow\left\{e\in\bI_{n}:\Em(\underline{0112},e)\supseteq S\right\}$ by setting $\Phi_{S}\left(e\right)=e'$, where
\[
  e'_{i} = \begin{cases}
      e_{i-1}, & \text{if } i-2\in S,\\
      e_{i}, & \text{otherwise}.
      \end{cases}
\]
In other words, $\Phi_{S}$ changes occurrences of $\underline{0102}$ in positions of $S$ into occurrences of $\underline{0112}$. To see that $\Phi_{S}$ is a bijection, we define its inverse map $\Psi_{S}$ by setting $\Psi_{S}(e')=e$, where
\[
  e_{i} = \begin{cases}
      e'_{i_{j}}, & \text{if } i-2\in B_{j},\\\
      e'_{i}, & \text{otherwise}.
      \end{cases}
\]
The map $\Psi_{S}$ changes occurrences of $\underline{0112}$ in positions of $S$ into occurrences of $\underline{0102}$.

Since $\Phi_{S}$ is a bijection, Equation~\eqref{eq:0102_0112} is proved, and the result follows from Lemma~\ref{lem:inclusion_exlusion}.
\end{proof}

A schematic diagram of the maps $\Phi_{S}$ and $\Psi_{S}$ from the above proof appears in Figure~\ref{fig:Scheme_0102_0112}. An example of the computation of $\Phi_{S}$ is given next.

\begin{figure}[htb]
	\noindent \begin{centering}
  \includegraphics[width=0.9\textwidth]{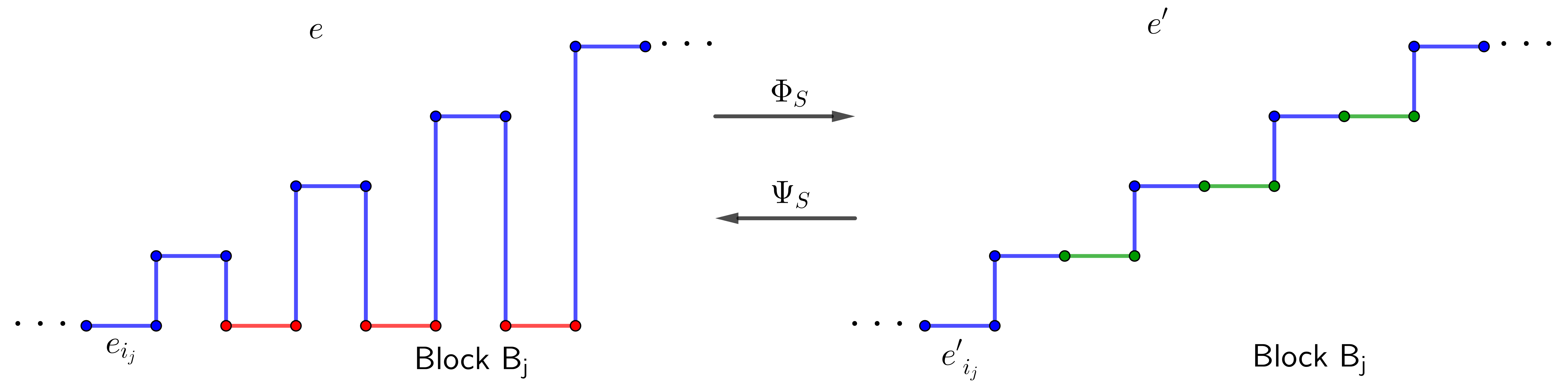}
		\par\end{centering}

	\protect\caption{Schematic diagram of the behavior of $\Phi_{S}$ and $\Psi_{S}$ from Proposition~\ref{prop:0102_0112} on occurrences within a block.}
	\label{fig:Scheme_0102_0112}
\end{figure}

\begin{exa}\label{exa:0102_0112}
\sloppy Let $S=\left\{3,5,9\right\}$ and ${e=0102040523262889}\in\bI_{16}$, which satisfy $S\subseteq\Em(\underline{0102},e)=\left\{1,3,5,9,11\right\}$.
Changing the occurrences of $\underline{0102}$ in positions of $S$ into occurrences of $\underline{0112}$, we obtain $\Phi_{S}(e)=e'=0102244523362889$.
Note that $S\subseteq\Em(\underline{0112},e')=\left\{3,5,9,13\right\}$, and that $\Psi_S(e')=e$.
\end{exa}

Next we prove equivalence (xiii) from Theorem~\ref{Equiv4}.

\begin{prop}\label{prop:2010_2110_2120} The patterns $\underline{2010}$, $\underline{2110}$  and $\underline{2120}$ are super-strongly Wilf equivalent.
\end{prop}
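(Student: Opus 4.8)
The plan is to mimic the inclusion–exclusion strategy of Proposition~\ref{prop:0102_0112}. Since $\stackrel{ss}{\sim}$ is transitive, it suffices to prove the two equivalences $\underline{2010}\stackrel{ss}{\sim}\underline{2110}$ and $\underline{2110}\stackrel{ss}{\sim}\underline{2120}$; and for each of these, by Lemma~\ref{lem:inclusion_exlusion}, it is enough to establish
\[
\left|\left\{e\in\bI_{n}:\Em(p,e)\supseteq S\right\}\right|=\left|\left\{e\in\bI_{n}:\Em(p',e)\supseteq S\right\}\right|
\]
for all $n$ and all $S\subseteq[n]$. I would prove each such identity by exhibiting an explicit bijection $\Phi_{S}$ between the two sets that changes the occurrences located at the positions of $S$. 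Note first that all three patterns share the same first entry ($2$), the same last entry ($0$), and three distinct values; a short computation with Definition~\ref{def:changeable} shows that they are pairwise interchangeable, so by Lemma~\ref{lem:characterize_changeable_patterns} every individual change is valid. The only real work is to make all the prescribed changes simultaneously.

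The key structural observation is that each of these patterns can overlap itself in two entries (the reductions of $p_{1}p_{2}$ and $p_{3}p_{4}$ all equal $10$) but not in three (the length-$3$ prefix and suffix reductions differ). Hence two occurrences of the same pattern at positions of $S$ are either disjoint, share a single boundary entry, or overlap in exactly two entries with positions differing by $2$. As in Proposition~\ref{prop:0102_0112}, I would first discard the case where $S$ contains two consecutive integers (both sets are then empty), and then write $S=\bigsqcup_{j}B_{j}$ as a disjoint union of \emph{blocks} $B_{j}=\{i_{j},i_{j}+2,\dots,i_{j}+2\ell_{j}\}$, maximal arithmetic progressions of common difference $2$, chosen so that distinct blocks interact at most through a single shared boundary entry.

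Within one block, the requirement $\Em(p,e)\supseteq B_{j}$ forces a rigid ``chain'' on the entries $e_{i_{j}},\dots,e_{i_{j}+2\ell_{j}+3}$, and the shape of this chain differs for each pattern: for $\underline{2010}$ the odd-offset entries are all equal to the common minimum while the even-offset entries strictly decrease; for $\underline{2120}$ the even-offset entries are all equal to the common maximum while the odd-offset entries strictly decrease; and for $\underline{2110}$ the entries form a strictly decreasing run in which each interior value is repeated. I would define $\Phi_{S}$ block by block as a shift that moves values between interior positions, always leaving the first and last entry of every occurrence fixed: for $\underline{2010}\leftrightarrow\underline{2110}$ a shift of the second-entry positions, governed by the condition $j-1\in S$, and for $\underline{2110}\leftrightarrow\underline{2120}$ a shift of the third-entry positions, governed by $j-2\in S$, exactly as in Proposition~\ref{prop:0102_0112}. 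Because the first and last entries are never touched, the boundary entries shared between consecutive blocks cause no conflict, and the new value placed at each modified position is the median (hence below the maximum, hence below its index) of the corresponding occurrence, so the transformed sequence is again an inversion sequence.

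I expect the main obstacle to be the bookkeeping that turns one chain shape into another: one must verify that $\Phi_{S}$ maps $p$-chains exactly onto $p'$-chains and write down the inverse shift precisely, recovering the common minimum (for $\underline{2010}$) or maximum (for $\underline{2120}$) of the block from the correct boundary position. Once $\Phi_{S}$ and its inverse are checked to match up the chains block by block, $\Phi_{S}$ is a bijection, the displayed counting identity follows, and both equivalences—hence $\underline{2010}\stackrel{ss}{\sim}\underline{2110}\stackrel{ss}{\sim}\underline{2120}$—follow from Lemma~\ref{lem:inclusion_exlusion}.
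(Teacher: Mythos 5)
Your proposal matches the paper's proof in approach and substance: the paper likewise reduces to showing $\left|\left\{e\in\bI_{n}:\Em(p,e)\supseteq S\right\}\right|=\left|\left\{e\in\bI_{n}:\Em(p',e)\supseteq S\right\}\right|$ via block-by-block bijections $\Phi_{S}$ described only as ``very similar to the proof of Proposition~\ref{prop:0102_0112}'' before invoking Lemma~\ref{lem:inclusion_exlusion}, and your chain analysis (constant minimum for $\underline{2010}$, doubled decreasing run for $\underline{2110}$, constant maximum for $\underline{2120}$) correctly fills in exactly those omitted details. One caution as you do the bookkeeping you flag: for $\underline{2110}\leftrightarrow\underline{2120}$ the one-step shift works only in the direction $\underline{2120}\to\underline{2110}$ (a literal reuse of the map $e'_i=e_{i-1}$ for $i-2\in S$ is a no-op on a $\underline{2110}$ chain, where those two entries are equal), so the other direction must copy the block's common maximum from the block's first position to every even interior position, mirroring $\Psi_{S}$ rather than $\Phi_{S}$ of Proposition~\ref{prop:0102_0112}---which is precisely the boundary-recovery you anticipate.
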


\begin{proof}
For any $n$ and $S\subseteq [n]$, we construct a bijection
\[
\Phi_{S}:\left\{e\in\bI_{n}:\Em(p,e)\supseteq S\right\}\rightarrow\left\{e\in\bI_{n}:\Em(p',e)\supseteq S\right\}
\]
that changes occurrences of $p=\underline{2010}$ in positions of $S$ into occurrences of $p'=\underline{2110}$ (likewise, $p'=\underline{2120}$).
The construction is very similar to the proof of Proposition~\ref{prop:0102_0112}, and so the details are omitted.
Using Lemma~\ref{lem:inclusion_exlusion}, the result follows.
\end{proof}

\begin{proof}[Proof of Theorem~\ref{Equiv4}]
All the listed super-strong Wilf equivalences follow from Propositions~\ref{prop:0021_ss_0121}, \ref{prop:0102_0112} and~\ref{prop:2010_2110_2120}, and Corollaries~\ref{cor:biject} and~\ref{cor:extend_110_100_sergi}. A brute force computation of the first ten terms of the sequences $\left|\bI_{n}\left(p\right)\right|$ for each consecutive pattern $p$ of length 4 shows that there are no additional equivalences, and so the list gives a complete classification.
\end{proof}

\sloppy We can generalize Proposition~\ref{prop:0102_0112} to patterns of the form ${\underline{0^{r}\,1\,0^{r}\,2\,0^{r}\ldots (s-1)\,0^{r}\,s}}$ and ${\underline{0^{r}\,1\,1^{r}\,2\,2^{r}\ldots (s-1)\,(s-1)^{r}\,s}}$, with $r\ge1$ and $s\geq 2$, by proving a statement analogous to \eqref{eq:0102_0112}. Indeed, we generalize the notion of a block to be a maximal subset of $S$ whose elements form an increasing  sequence where consecutive differences are of the form $a(r+1)$, with $1\leq a\leq s-1$. Again, we can write $S$ uniquely as a disjoint union of blocks, say $S=\bigsqcup_{j=1}^{m}B_{j}$. We define a bijection $\Phi_{S}$, analogous to the one in the proof of Proposition~\ref{prop:0102_0112}, which changes occurrences of ${\underline{0^{r}\,1\,0^{r}\,2\,0^{r}\ldots (s-1)\,0^{r}\,s}}$ in positions of $S$ into occurrences of ${\underline{0^{r}\,1\,1^{r}\,2\,2^{r}\ldots (s-1)\,(s-1)^{r}\,s}}$, as suggested by the schematic diagram in Figure~\ref{fig:Scheme_generalize_0102_0112}. Specifically, we set $\Phi_{S}\left(e\right)=e'$, where
\[
  e'_{i} = \begin{cases}
      e_{i-b-1}, & \text{if } i-a(r+1)-b\in S\text{ for some }1\leq a\leq s-1\text{ and } 0\leq b\leq r-1,\\
      e_{i}, & \text{otherwise}.
      \end{cases}
\]

\begin{figure}[htb]
	\noindent \begin{centering}
  \includegraphics[width=0.98\textwidth]{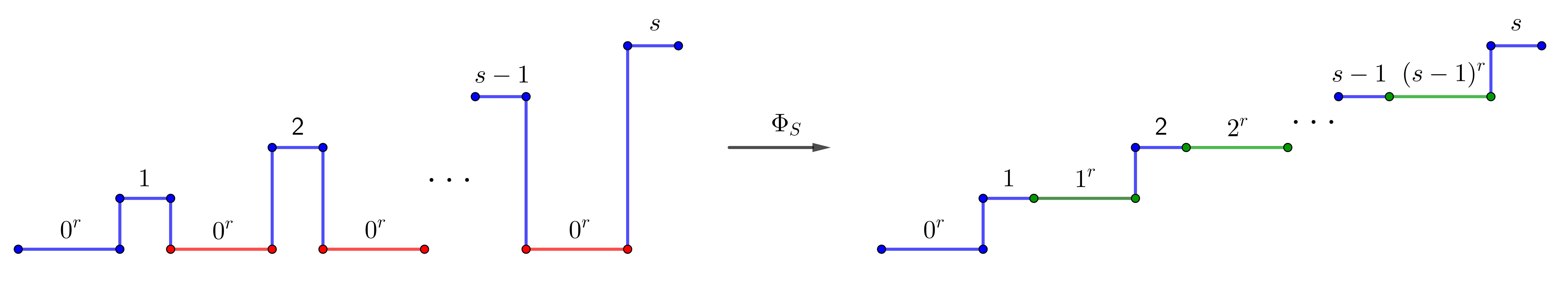}
		\par\end{centering}

	\protect\caption{Schematic diagram showing how $\Phi_{S}$ changes an occurrence of ${\underline{0^{r}\,1\,0^{r}\,2\,0^{r}\ldots (s-1)\,0^{r}\,s}}$ into an occurrence of ${\underline{0^{r}\,1\,1^{r}\,2\,2^{r}\ldots (s-1)\,(s-1)^{r}\,s}}$.\label{fig:Scheme_generalize_0102_0112}}
\end{figure}

\begin{thm}\label{thm:generalize_0102_0112} \sloppy For every $r\ge1$ and $s\geq 2$, the patterns ${\underline{0^{r}\,1\,0^{r}\,2\,0^{r}\ldots (s-1)\,0^{r}\,s}}$ and ${\underline{0^{r}\,1\,1^{r}\,2\,2^{r}\ldots (s-1)\,(s-1)^{r}\,s}}$ are super-strongly Wilf equivalent.
\end{thm}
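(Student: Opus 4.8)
The plan is to follow the template of the proof of Proposition~\ref{prop:0102_0112}. Writing $P=\underline{0^{r}\,1\,0^{r}\,2\ldots(s-1)\,0^{r}\,s}$ and $P'=\underline{0^{r}\,1\,1^{r}\,2\,2^{r}\ldots(s-1)\,(s-1)^{r}\,s}$, both of length $s(r+1)$, I would first reduce the claim to the identity
\[
\left|\left\{e\in\bI_{n}:\Em(P,e)\supseteq S\right\}\right|=\left|\left\{e\in\bI_{n}:\Em(P',e)\supseteq S\right\}\right|
\]
for all $n$ and all $S\subseteq[n]$, after which Lemma~\ref{lem:inclusion_exlusion} immediately yields $P\stackrel{ss}{\sim}P'$. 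To prove this identity I would verify that the map $\Phi_{S}$ displayed just before the statement is a well-defined bijection onto the right-hand set, with inverse $\Psi_{S}$ that restores each changed block entry to the common minimum value of its occurrence, namely the value at the first position of the block $B_{j}$ containing it, exactly as in the case $r=1$, $s=2$.

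The crux is understanding how occurrences of $P$ (and of $P'$) overlap, which is what justifies the block decomposition $S=\bigsqcup_{j}B_{j}$. The key observation is that both patterns have the same \emph{ascent structure}: in any occurrence of $P$ or $P'$ at position $j$, the reduction forces $e_{j+t}>e_{j+t-1}$ precisely when $t\equiv r\pmod{r+1}$ (these are the ``peaks'' carrying the values $1,2,\dots,s$), and in particular the final entry, at relative position $s(r+1)-1\equiv r$, is always such an ascent. Now suppose two occurrences, at positions $j$ and $j'=j+d$ with $0<d$, overlap in at least two entries, so $d\le s(r+1)-2$. Then the last position $p=j+s(r+1)-1$ of the earlier occurrence and its predecessor both lie in the later occurrence; the earlier occurrence forces $e_{p}>e_{p-1}$, so the later occurrence must reduce consistently there, and a direct computation shows that $p$ occupies an ascent position of the later occurrence if and only if $d\equiv0\pmod{r+1}$. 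Hence $d=c(r+1)$ with $1\le c\le s-1$, which is exactly the assertion that the elements of a single block differ by such multiples. This argument applies verbatim to $P$ and to $P'$, since they share their ascent positions and both end in the peak $s$.

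With the overlap structure in hand, the remaining verifications are routine. First, $\Phi_{S}(e)=e'$ is an inversion sequence: a changed entry equals the peak value $e_{i-b-1}=e_{j+a(r+1)-1}<j+a(r+1)-1\le i-1<i$. Second, $\Phi_{S}$ is well-defined even under overlaps: if a position $i$ is affected by two occurrences $j,j'$ of one block, then $j'-j=c(r+1)$, the two peaks being copied coincide, and one checks that the corresponding offsets agree, so the prescribed value is unambiguous; occurrences in distinct blocks, as well as occurrences overlapping in a single entry (which force $d=s(r+1)-1\not\equiv0\pmod{r+1}$), never modify a common entry and hence cannot conflict. Third, by the same alignment of peaks all the occurrences of $P$ within a block share one minimum value, equal to the value at the block's first position in the image, so $\Psi_{S}$ is well-defined and is a two-sided inverse of $\Phi_{S}$. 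I expect the main obstacle to be precisely this overlap bookkeeping—confirming that the single closed formula for $\Phi_{S}$ assigns consistent values across all overlapping occurrences inside a block, and that $\Psi_{S}$ recovers the original minimum; once the dichotomy $d\equiv0\pmod{r+1}$ is established, however, this reduces to the verification already carried out for Proposition~\ref{prop:0102_0112}.
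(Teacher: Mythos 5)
Your proposal is correct and takes essentially the same route as the paper, which proves Theorem~\ref{thm:generalize_0102_0112} exactly by generalizing the block decomposition and the map $\Phi_{S}$ of Proposition~\ref{prop:0102_0112} (via the displayed formula and Figure~\ref{fig:Scheme_generalize_0102_0112}) and concluding with Lemma~\ref{lem:inclusion_exlusion}. Your explicit overlap lemma---that two occurrences sharing at least two entries must be offset by a multiple of $r+1$, because both patterns force ascents precisely at relative positions $\equiv r \pmod{r+1}$ and end in an ascent---is a correct elaboration of the bookkeeping that the paper leaves implicit in its definition of blocks.
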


Similarly, we can generalize Proposition~\ref{prop:2010_2110_2120} as follows.

\begin{thm}\label{thm:generalize_2010_2120_2110} \sloppy For every $r\ge1$ and $s\geq 2$, the patterns  ${\underline{s\,0^{r}\,(s-1)\,0^{r}\ldots 0^{r}\,10^{r}}}$, ${\underline{s\,(s-1)^{r}\,s\,(s-2)^{r}\,s\ldots s\,1^{r}s\,0^{r}}}$ and ${\underline{s\,(s-1)^{r}\,(s-1)\,(s-2)^{r}\,(s-2)\ldots 1^{r}1\,0^{r}}}$ are super-strongly Wilf equivalent.
\end{thm}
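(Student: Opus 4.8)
The plan is to mimic the proofs of Proposition~\ref{prop:2010_2110_2120} and Theorem~\ref{thm:generalize_0102_0112}. Write $p=\underline{s\,0^{r}\,(s-1)\,0^{r}\ldots 0^{r}\,10^{r}}$ for the first (base) pattern, and let $p'$ and $p''$ denote the second and third patterns, respectively. Since super-strong Wilf equivalence is transitive, it suffices to establish $p\stackrel{ss}{\sim}p'$ and $p\stackrel{ss}{\sim}p''$ separately. By Lemma~\ref{lem:inclusion_exlusion}, each of these reduces to proving, for every $n$ and every $S\subseteq[n]$,
\[
\left|\left\{e\in\bI_{n}:\Em(p,e)\supseteq S\right\}\right|=\left|\left\{e\in\bI_{n}:\Em(q,e)\supseteq S\right\}\right|,
\]
where $q\in\{p',p''\}$. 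I would prove each such identity by exhibiting an explicit bijection $\Phi_{S}$ that changes the occurrences of $p$ in positions of $S$ into occurrences of $q$, leaving all other entries fixed.

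The first step is to analyze how two occurrences can overlap. Each of $p$, $p'$, $p''$ has length $s(r+1)$ and a staircase shape whose plateaus have width $r+1$, so I expect that two occurrences overlap in more than one entry precisely when their starting positions differ by $a(r+1)$ for some $1\le a\le s-1$, exactly as in Theorem~\ref{thm:generalize_0102_0112}. This motivates defining a \emph{block} of $S$ to be a maximal subset whose elements form an increasing sequence with consecutive differences of the form $a(r+1)$, with $1\le a\le s-1$, and writing $S=\bigsqcup_{j=1}^{m}B_{j}$ as before. Because occurrences belonging to positions in distinct blocks cannot overlap, the changes may be carried out block by block.

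The second step is to write $\Phi_{S}$ and its inverse $\Psi_{S}$ explicitly, in analogy with the formula displayed before Theorem~\ref{thm:generalize_0102_0112} and guided by a schematic diagram like Figure~\ref{fig:Scheme_generalize_0102_0112}. In each occurrence of $p$, the map raises the entries of the $0^{r}$-plateaus (and, in the case $q=p'$, also the intermediate separator entries) to the values prescribed by $q$, propagating these changes coherently across an entire block. I would then check that $\Phi_{S}(e)\in\bI_{n}$: the decisive point is that the target patterns have the same maximum value $s$ as $p$, attained at the leading entry of each occurrence, which is never altered; hence every raised entry at a position $i+t-1$ (with $t\ge2$) receives a value at most $f(s)=e_{i}<i\le i+t-1$, so the inversion-sequence bound is preserved. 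Finally, $\Phi_{S}$ and $\Psi_{S}$ are readily seen to be mutually inverse, so $\Phi_{S}$ maps $\{e:\Em(p,e)\supseteq S\}$ bijectively onto $\{e:\Em(q,e)\supseteq S\}$, completing the argument.

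The main obstacle is the bookkeeping forced by the overlaps. Unlike the non-overlapping situation handled by Theorem~\ref{thm:general_extend_110_100_sergi}, one cannot change occurrences one at a time; the block-wise definition of $\Phi_{S}$ must be arranged so that all overlapping occurrences inside a block are converted simultaneously, and one must verify that an entry shared by two occurrences is assigned the same value no matter which occurrence it is read from. Establishing this well-definedness of $\Phi_{S}$ and $\Psi_{S}$ on the overlaps, together with the inversion-sequence bound, is precisely the content suppressed in the proof of Proposition~\ref{prop:2010_2110_2120}; carrying it out uniformly for all $r\ge1$ and $s\ge2$ is the crux of the proof.
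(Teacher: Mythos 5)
Your proposal takes essentially the same route as the paper: the paper proves Theorem~\ref{thm:generalize_2010_2120_2110} only by remarking that it generalizes Proposition~\ref{prop:2010_2110_2120} in exactly the way Theorem~\ref{thm:generalize_0102_0112} generalizes Proposition~\ref{prop:0102_0112} --- i.e., reduce to counting $\left\{e\in\bI_{n}:\Em(p,e)\supseteq S\right\}$ via Lemma~\ref{lem:inclusion_exlusion}, decompose $S$ into blocks with consecutive differences $a(r+1)$ for $1\leq a\leq s-1$, and convert all occurrences within a block simultaneously by a cascading, block-global $\Phi_{S}$ as in the displayed formula before Theorem~\ref{thm:generalize_0102_0112}. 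Your uniform validity argument (every modified entry receives a value at most the block's unaltered leading entry $f(s)=e_{i}<i$, which lies strictly below its position) is exactly the right reason the changes stay within $\bI_{n}$, and your identification of block-level coherence on overlaps as the suppressed crux matches what the paper leaves implicit.
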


\subsection{A note regarding patterns extending $\underline{110}$ and $\underline{100}$}\label{subsec:extend_110_100}

\begin{defin} Let $r>s>0$. Suppose $p=\underline{p_{1}p_{2}\ldots p_{r}}$ and  $p'=\underline{p'_{1}p'_{2}\ldots p'_{s}}$ are consecutive patterns of
length $r$ and $s$, respectively. We say
\textit{$p$ extends $p'$ on the right} (respectively, \textit{left}) if
the leftmost (respectively, \textit{rightmost}) $s$ entries of any occurrence of $p$ in an inversion sequence form an occurrence of $p'$.
If $p$ extends $p'$ either on the left or on the right,
then we say \textit{$p$ extends $p'$}.
\end{defin}

\begin{exa} The patterns of length 4 extending $\underline{110}$ are $\underline{1102}$, $\underline{1101}$, $\underline{2201}$, $\underline{1100}$
and $\underline{2210}$ on the right; and $\underline{2110}$, $\underline{1110}$, $\underline{1220}$, $\underline{0110}$ and $\underline{0221}$ on the left.
The patterns of length 4 extending $\underline{100}$ are $\underline{1002}$, $\underline{1001}$, $\underline{2001}$, $\underline{1000}$
and $\underline{2110}$ on the right; and $\underline{2100}$, $\underline{1100}$, $\underline{1200}$, $\underline{0100}$ and $\underline{0211}$ on the left.
\end{exa}

We proved in Proposition~\ref{prop:110ss100} that $\underline{100} \stackrel{ss}{\sim} \underline{110}$, so it is natural to ask whether $p \stackrel{ss}{\sim} p'$ for some patterns $p$ and $p'$ of length 4 extending $\underline{100}$ and  $\underline{110}$, respectively. It follows from Theorem~\ref{Equiv4} that, in fact, there is a bijective correspondence between patterns of length 4 extending $\underline{100}$ and those extending $\underline{110}$,
preserving super-strong Wilf equivalence classes.

\begin{cor}\label{cor:correspondence_patterns_extending_100_110} Consider the bijection from patterns of length $4$ extending $\underline{100}$ to patterns of length $4$ extending $\underline{110}$ given by the following assignment:\vspace{-6pt}
\begin{multicols}{5}
\noindent $\underline{0100}\mapsto\underline{0110}$,\\
$\underline{1002}\mapsto\underline{1102}$,\\
$\underline{1200}\mapsto\underline{1220}$,\\
$\underline{0211}\mapsto\underline{0221}$,\\
$\underline{1000}\mapsto\underline{1110}$,\\
$\underline{1001}\mapsto\underline{1101}$,\\
$\underline{1100}\mapsto\underline{1100}$,\\
$\underline{2100}\mapsto\underline{2210}$,\\
$\underline{2001}\mapsto\underline{2201}$,\\
$\underline{2110}\mapsto\underline{2110}$.
\end{multicols}\vspace{-6pt}
\noindent Then, for each pair $p\mapsto p'$, we have that $p\stackrel{ss}{\sim}p'$.
\end{cor}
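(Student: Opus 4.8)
The plan is to treat this corollary as a direct consequence of Theorem~\ref{Equiv4}, reorganizing the length-$4$ equivalences according to which length-$3$ pattern they extend. First I would confirm that the displayed assignment is a well-defined bijection between the two families of patterns. This is pure bookkeeping: the Example immediately preceding the corollary lists exactly ten patterns of length $4$ extending $\underline{100}$ and exactly ten extending $\underline{110}$, so I would simply check that the left-hand sides of the ten assignments exhaust the former list and the right-hand sides exhaust the latter, each without repetition. Since both lists have ten elements and the map is visibly injective, this establishes the bijection.

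Next, for each pair $p\mapsto p'$ I would record the super-strong Wilf equivalence $p\stackrel{ss}{\sim}p'$ by reading it off from Theorem~\ref{Equiv4}. Eight of the ten equivalences are entries of that theorem: $\underline{0100}\mapsto\underline{0110}$ is~(iv), $\underline{0211}\mapsto\underline{0221}$ is~(vii), $\underline{1000}\mapsto\underline{1110}$ is~(viii), and $\underline{2100}\mapsto\underline{2210}$ is~(x), while the remaining four are extracted from the multi-pattern chains, namely $\underline{1002}\stackrel{ss}{\sim}\underline{1102}$ from~(iii), $\underline{1200}\stackrel{ss}{\sim}\underline{1220}$ from~(vi), $\underline{1001}\stackrel{ss}{\sim}\underline{1101}$ from~(ix), and $\underline{2001}\stackrel{ss}{\sim}\underline{2201}$ from~(xi). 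The two remaining pairs, $\underline{1100}\mapsto\underline{1100}$ and $\underline{2110}\mapsto\underline{2110}$, are fixed points of the assignment, and for these the equivalence is the trivial fact that every pattern is super-strongly Wilf equivalent to itself.

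The one structural point I would emphasize is why precisely these two patterns are fixed. Computing reductions shows that both $\underline{1100}$ and $\underline{2110}$ simultaneously extend $\underline{100}$ on one end and $\underline{110}$ on the other: for instance the leftmost three entries $211$ of $\underline{2110}$ reduce to $100$ while its rightmost three entries $110$ reduce to $110$, and symmetrically the rightmost entries $100$ of $\underline{1100}$ reduce to $100$ while its leftmost entries $110$ reduce to $110$. Thus these two patterns lie in the intersection of the two families, forcing the bijection to fix them. There is no genuine obstacle in this argument, as all the enumerative content already resides in Theorem~\ref{Equiv4}; the only step demanding any care is the verification that the ten assignments cover each family completely, which follows at once from the preceding Example.
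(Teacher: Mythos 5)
Your proposal is correct and takes essentially the same route as the paper, which presents this corollary as a direct consequence of Theorem~\ref{Equiv4}: the eight nontrivial pairs are read off from items (iii), (iv), (vi), (vii), (viii), (ix), (x) and (xi) of that theorem, and the two fixed points $\underline{1100}$ and $\underline{2110}$ (the patterns lying in both families) are handled by reflexivity of $\stackrel{ss}{\sim}$. Your bookkeeping against the preceding Example, confirming that the ten left-hand sides and ten right-hand sides exhaust the respective families, is exactly the verification the paper leaves implicit.
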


\nocite{*}

\bibliographystyle{plain}
\bibliography{consecI_bib}

\begin{thebibliography}{10}

\bibitem{AuliElizaldeII}
Juan~S. Auli and Sergi Elizalde.
\newblock Consecutive patterns in inversion sequences {II}: {A}voiding patterns
  of relations.
\newblock {\em J. Integer Seq.}, 22:Art. 19.7.5, 2019.

\bibitem{Babson}
Eric Babson and Einar Steingr{\'\i}msson.
\newblock Generalized permutation patterns and a classification of the
  {M}ahonian statistics.
\newblock {\em S{\'e}m. Lothar. Combin.}, 44(Art.~B44b):547--548, 2000.

\bibitem{BaxterShattuck}
Andrew Baxter and Mark Shattuck.
\newblock Some {W}ilf-equivalences for vincular patterns.
\newblock {\em J. Comb.}, 6:19--45, 2015.

\bibitem{BaxterPudwell}
Andrew~M. Baxter and Lara~K. Pudwell.
\newblock Enumeration schemes for vincular patterns.
\newblock {\em Discrete Math.}, 312:1699--1712, 2012.

\bibitem{Beaton}
Nicholas Beaton, Mathilde Bouvel, Veronica Guerrini, and Simone Rinaldi.
\newblock Enumerating five families of pattern-avoiding inversion sequences;
  and introducing the powered {C}atalan numbers.
\newblock {\em Theoret. Comput. Sci.}, 777:69--92, 2019.

\bibitem{Bouvel}
Mathilde Bouvel, Veronica Guerrini, Andrew Rechnitzer, and Simone Rinaldi.
\newblock Semi-{B}axter and strong-{B}axter: {T}wo relatives of the {B}axter
  sequence.
\newblock {\em SIAM~J. Discrete Math.}, 32:2795--2819, 2018.

\bibitem{MartinezSavageI}
Sylvie Corteel, Megan~A. Martinez, Carla~D. Savage, and Michael Weselcouch.
\newblock Patterns in inversion sequences~{I}.
\newblock {\em Discrete Math. Theor. Comput. Sci.}, 18, 2016.

\bibitem{DavidBarton}
F.~N. David and D.~E. Barton.
\newblock {\em Combinatorial {C}hance}.
\newblock Lubrecht \& Cramer Ltd, 1962.

\bibitem{DwyerElizalde}
Tim Dwyer and Sergi Elizalde.
\newblock Wilf equivalence relations for consecutive patterns.
\newblock {\em Adv. in Appl. Math.}, 99:134--157, 2018.

\bibitem{ElizaldeII}
Sergi Elizalde.
\newblock Asymptotic enumeration of permutations avoiding a generalized
  pattern.
\newblock {\em Adv. in Appl. Math.}, 36:138--155, 2006.

\bibitem{Elizalde}
Sergi Elizalde.
\newblock A survey of consecutive patterns in permutations.
\newblock In {\em Recent Trends in Combinatorics (IMA Vol. Math. Appl.)}, pages
  601--618. Springer, 2016.

\bibitem{ElizaldeNoy}
Sergi Elizalde and Marc Noy.
\newblock Consecutive patterns in permutations.
\newblock {\em Adv. in Appl. Math.}, 30:110--125, 2003.

\bibitem{KimLin}
Dongsu Kim and Zhicong Lin.
\newblock Refined restricted inversion sequences.
\newblock {\em S{\'e}m. Lothar. Combin.}, 78, 2017.

\bibitem{KimLinII}
Dongsu Kim and Zhicong Lin.
\newblock A sextuple equidistribution arising in pattern avoidance.
\newblock {\em J.~Combin. Theory Ser. A}, 155:267--286, 2018.

\bibitem{KitaevII}
Sergey Kitaev.
\newblock Partially ordered generalized patterns.
\newblock {\em Discrete Math.}, 298:212--229, 2005.

\bibitem{Lin}
Zhicong Lin.
\newblock Restricted inversion sequences and enhanced 3-noncrossing partitions.
\newblock {\em European J. Combin.}, 70:202--211, 2018.

\bibitem{MansourShattuck}
Toufik Mansour and Mark Shattuck.
\newblock Pattern avoidance in inversion sequences.
\newblock {\em Pure Math. Appl. (PU.M.A.)}, 25:157--176, 2015.

\bibitem{MartinezSavageII}
Megan~A. Martinez and Carla~D. Savage.
\newblock Patterns in inversion sequences {II}: Inversion sequences avoiding
  triples of relations.
\newblock {\em J. Integer Seq.}, 21:Art.~18.2.2, 2018.

\bibitem{Nakamura}
Brian Nakamura.
\newblock Computational approaches to consecutive pattern avoidance in
  permutations.
\newblock {\em Pure Math. Appl. (PU.M.A.)}, 22:253--268, 2011.

\bibitem{OEIS}
{OEIS Foundation Inc.}
\newblock The {O}n-{L}ine {E}ncyclopedia of {I}nteger {S}equences.
\newblock published electronically at \url{https://oeis.org}.

\bibitem{Stanley}
Richard~P. Stanley.
\newblock {\em Enumerative {C}ombinatorics, {V}ol. 1}.
\newblock Cambridge University Press, {S}econd edition, 2011.

\bibitem{Yan}
Sherry H.~F. Yan.
\newblock Bijections for inversion sequences, ascent sequences and 3-nonnesting
  set partitions.
\newblock {\em Appl. Math. Comput.}, 325:24--30, 2018.

\end{thebibliography}
\label{sec:biblio}

\end{document}